\DeclareOldFontCommand{\rm}{\normalfont\rmfamily}{\mathrm}
\DeclareOldFontCommand{\sf}{\normalfont\sffamily}{\mathsf}
\DeclareOldFontCommand{\bf}{\normalfont\bfseries}{\mathbf}
\DeclareOldFontCommand{\it}{\normalfont\itshape}{\mathit}
\newtheorem{thm}{Theorem}[section]
\newtheorem{lemma}[thm]{Lemma}
\newtheorem{prop}[thm]{Proposition}
\newtheorem{cor}[thm]{Corollary}
\newtheorem{thmabc}{Theorem}
\newtheorem{corabc}[thmabc]{Corollary}
\newtheorem{propabc}[thmabc]{Proposition}
\theoremstyle{definition}
\newtheorem{ex}[thm]{Example}
\newtheorem{rem}[thm]{Remark}
\newtheorem{question}[thm]{Question}
\numberwithin{equation}{section}
\renewcommand{\le}{\leqslant}
\renewcommand{\ge}{\geqslant}
\renewcommand{\geq}{\ge}
\renewcommand{\qedsymbol}{\ensuremath{\blacklozenge}}
\def\emph{}
\DeclareTextFontCommand{\bfemph}{\bfseries}
\DeclareTextFontCommand{\itemph}{\itshape}
\def\emph{\bfemph}
\def\blankfootnote{\xdef\@thefnmark{}\@footnotetext}
\newcommand*{\textlabel}[2]{%
  \edef\@currentlabel{#1}
  \phantomsection
  #1\label{#2}
}
\newcommand{\Biv}{\ensuremath{\mathcal{C}}}
\newcommand{\CSP}{\ensuremath{\mathsf F}}
\DeclareMathOperator{\cc}{cc}
\DeclareMathOperator{\concnt}{k}
\newcommand{\xto}{\xrightarrow}
\newcommand{\std}{\ensuremath{\mathsf{e}}}
\DeclareMathOperator{\supp}{supp}
\DeclareMathOperator{\identity}{id}
\DeclareMathOperator{\pred}{p}
\DeclareMathOperator{\Irr}{Irr}
\DeclareMathOperator{\GL}{GL}
\DeclareMathOperator{\Coker}{Coker}
\DeclareMathOperator{\Uni}{U}
\DeclareMathOperator{\Mat}{M}
\DeclareMathOperator{\rank}{rk}
\DeclareMathOperator{\Adj}{Adj}
\newcommand{\card}[1]{\lvert#1\rvert}
\DeclarePairedDelimiter{\abs}{\lvert}{\rvert}
\newcommand{\divides}[2]{\ensuremath{ {#1} \mid {#2} }}
\DeclareMathOperator{\CG}{K}
\DeclareMathOperator{\DG}{\Delta}
\DeclareMathOperator{\Star}{Star}
\DeclareMathOperator{\compcnt}{c}
\DeclareMathOperator{\nbhcnt}{d}
\DeclareMathOperator{\Nbh}{N}
\newcommand{\ConnDoms}{\ensuremath{\mathfrak D}^{\mathrm c}}
\newcommand{\join}{\ensuremath{\vee}}
\newcommand{\Path}[1]{\operatorname{P}_{{#1}}}
\newcommand{\Point}{\ensuremath{\bullet}}
\newcommand{\QQ}{\mathbf{Q}}
\newcommand{\FF}{\mathbf{F}}
\newcommand{\GG}{\mathbf{G}}
\newcommand{\HH}{\mathbf{H}}
\newcommand{\ZZ}{\mathbf{Z}}
\newcommand{\fh}{\ensuremath{\mathfrak h}}
\newcommand{\fz}{\ensuremath{\mathfrak z}}
\newcommand{\Tau}{\ensuremath{\mathsf{T}}}
\newcommand{\fO}{\mathfrak{O}}
\newcommand{\fP}{\mathfrak{P}}
\newcommand{\sA}{\mathsf{A}}
\title{Enumerating conjugacy classes of graphical groups over finite fields}
\author{Tobias Rossmann}
\date{}
\begin{document}
\thispagestyle{empty}

\maketitle

\thispagestyle{empty}
\vspace*{-2em}
\begin{abstract}
  \small
  Each graph and choice of a commutative ring gives rise to an associated
  graphical group.
  In this article, we introduce and investigate graph polynomials that enumerate
  conjugacy classes of graphical groups over finite fields according to their
  sizes.
\end{abstract}

\blankfootnote{\noindent{\itshape 2020 Mathematics Subject Classification.}
  20D15, 
  20E45, 
  05A15, 
  05C25, 
  05C31 

  \noindent {\itshape Keywords.}
  Graphical groups, conjugacy classes, graph polynomials.
}

\section{Introduction}
\label{s:intro}

\subsection{Graphical groups}
\label{ss:intro/graphical}

Throughout, graphs are finite, simple, and (unless otherwise indicated)
contain at least one vertex. 
When the reference to an ambient graph is clear, we use $\sim$ to indicate the
associated adjacency relation.
All rings are associative, commutative, and unital.

Let $\Gamma = (V,E)$ be a graph with $n$ vertices.
The \emph{graphical group} $\GG_\Gamma(R)$ associated with $\Gamma$ over a
ring $R$ was defined in \cite[\S 3.4]{cico}.
For a short equivalent description (see~\S\ref{ss:relating_graphical}), write
$V = \{v_1,\dotsc,v_n\}$ and let $J = \{ (j,k) : 1\le j < k \le n,\, v_j\sim
v_k\}$.
Then $\GG_\Gamma(R)$ is generated by symbols $x_1(r),\dotsc,x_n(r)$ and
$z_{jk}(r)$ for $(j,k)\in J$ and $r\in R$, subject to the following defining
relations for $i,i' \in [n] := \{1,\dotsc,n\}$, $(j,k), (j',k')\in J$, and
$r,r'\in R$:

\begin{enumerate}
\item
  $x_i(r)  x_i(r') = x_i(r+r')$ and $z_{jk}(r)z_{jk}(r') = z_{jk}(r+r')$.
  \hfill
  (``scalars'')
\item
  $[x_j(r),x_k(r')] = z_{jk}(rr')$.
  \hfill
  (``adjacent vertices and commutators'')

  (Recall that $(j,k) \in J$ so that $v_j \sim v_k$.)
\item
  $[x_i(r),x_{i'}(r')] = 1$ if $v_i\not\sim v_{i'}$.
  \hfill
  (``non-adjacent vertices and commutators'')
\item
  $[x_i(r), z_{jk}(r')] = [ z_{j'k'}(r),z_{jk}(r')]= 1$.
  \hfill
  (``centrality of commutators'')
\end{enumerate}

Note that every ring map $R\to R'$ induces an evident group homomorphism
$\GG_\Gamma(R) \to \GG_\Gamma(R')$.
We will see in \S\ref{ss:relating_graphical} that the resulting group functor
$\GG_\Gamma$ represents the \emph{graphical group scheme} associated with
$\Gamma$ as defined in \cite{cico}.
The isomorphism type of $\GG_\Gamma$ does not depend on the chosen ordering of
the vertices of $\Gamma$.

\begin{ex}
  \label{ex:graphical_relatives}
  Various instances and relatives of graphical groups appeared in the
  literature.
  \begin{enumerate}[(i)]
  \item
    \label{ex:graphical_relatives_artin}
    $\GG_\Gamma(\ZZ)$ is isomorphic to the maximal nilpotent quotient of class
    at most $2$ of the right-angled Artin group $\langle x_1,\dotsc,x_n \mid
    [x_i,x_j] = 1  \text{ whenever } v_i\not\sim v_j\rangle$;
    see \S\ref{ss:artin}.
\item
  \label{ex:graphical_relatives_complete}
  Let $\CG_n$ denote a complete graph on $n$ vertices.
  Then $\GG_{\CG_n}(\ZZ)$ is a free nilpotent group group of rank $n$ and
  class at most $2$.
  For each odd prime~$p$, the graphical group $\GG_{\CG_n}(\FF_p)$ is a free
  nilpotent group of rank $n$, exponent dividing~$p$, and class at most $2$.
  (Both statements follow from Proposition~\ref{prop:ZZ_and_ZZk_points}
  below.
  We note that the prime $2$ does not play an exceptional role in any of our main
  results.)
\item
  \label{ex:graphical_relatives_path}
  Let $\Path n$ be a path graph on $n$ vertices.
  Let $\Uni_d\le \GL_d$ be the group scheme of upper unitriangular $d\times d$
  matrices.
  Then for each ring $R$, the group $\GG_{\Path n}(R)$ is the maximal quotient
  of class at most $2$ of $\Uni_{n+1}(R)$; cf.~\cite[\S 9.4]{cico}.
\item
  \label{ex:graphical_relatives_discrete}
  Let $\DG_n$ denote an edgeless graph on $n$ vertices.
  Then for each ring $R$, we may identify $\GG_{\DG_n}(R)$ and the (abelian)
  additive group $R^n$.
\item
  \label{ex:graphical_relatives_baer}
  Let $p$ be an odd prime.
  Then $\GG_\Gamma(\FF_p)$ is isomorphic to the $p$-group attached to the
  complement of $\Gamma$ via Mekler's construction~\cite{Mek81};
  cf.~Proposition~\ref{prop:ZZ_and_ZZk_points}\ref{prop:ZZ_and_ZZk_points2}.
  Li and Qiao~\cite{LQ20} used what they dubbed the {Baer-Lov\'asz-Tutte
    procedure} to attach a finite $p$-group to $\Gamma$.
  Their group is also isomorphic to $\GG_\Gamma(\FF_p)$; see \S\ref{ss:centralisers}.
\end{enumerate}
\end{ex}

\subsection{Known results: class numbers of graphical groups}
\label{ss:intro/known}

Let $\cc_e(G)$ denote the number of conjugacy classes of size $e$ of a finite
group $G$, and let $\concnt(G) = \sum_{e=1}^\infty \cc_e(G)$ be the
\emph{class number} of $G$.
It is well known that $\concnt(\GL_d(\FF_q))$ is a polynomial in $q$ for fixed
$d$; see \cite[Ch.~1, Exercise~190]{Sta12}.
This article is devoted to the class numbers $\concnt(\GG_\Gamma(\FF_q))$.
We first recall known results.

\begin{thm}[{\cite[Cor.~1.3]{cico}}]
  \label{thm:cico/poly}
  Given a graph $\Gamma$, there exists $f_\Gamma(X)\in \QQ[X]$ such that
  $\concnt(\GG_\Gamma(\FF_q)) = f_\Gamma(q)$ for each prime power $q$.
\end{thm}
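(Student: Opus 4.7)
The plan is to reduce the class-counting problem to counting points on rank strata of a graph-determined family of matrices, and then to invoke a polynomial-count statement for those strata.

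First, I would record a formula valid for any finite group $G$ of nilpotency class at most $2$. Writing $D := [G,G]$, the inclusion $D \subseteq Z(G)$ implies that $C_G(g)$ depends only on $\bar g := gD \in G/D$, and the conjugacy class of $g$ equals the coset $g \cdot [g,G]$, where $[g,G] := \{[g,h] : h \in G\}$ is a subgroup of $D$. Orbit-counting then gives
\[
\concnt(G) = \sum_{\bar g \in G/D} \frac{\card{D}}{\card{[g,G]}}.
\]

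Second, I would specialise to $G = \GG_\Gamma(\FF_q)$. The defining relations identify $G/D$ with $\FF_q^n$ and $D$ with $\FF_q^m$, where $m := \card E$; bilinear expansion of the commutator shows that $h \mapsto [g,h]$ factors as an $\FF_q$-linear map $\phi_{\bar g}\colon \FF_q^n \to \FF_q^m$ whose $(j,k)$-th coordinate is $g_j h_k - g_k h_j$ for $(j,k) \in J$. The entries of the matrix representing $\phi_{\bar g}$ are linear in the coordinates of $\bar g$, and $\card{[g,G]} = q^{\rank \phi_{\bar g}}$, so stratifying by rank gives
\[
\concnt(\GG_\Gamma(\FF_q)) = \sum_{r\geq 0} \card{\{\bar g\in \FF_q^n : \rank \phi_{\bar g} = r\}}\cdot q^{m-r}.
\]

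The remaining task, which I expect to be the chief obstacle, is to show that each rank stratum $\{\bar g\in \FF_q^n : \rank \phi_{\bar g} = r\}$ has $\FF_q$-point count equal to a polynomial in $q$ with coefficients independent of $q$. These strata are cut out by the vanishing and non-vanishing of the $(r+1)\times(r+1)$-minors of a matrix of linear forms, and general determinantal loci need not be polynomial-count; the special alternating structure imposed by $\Gamma$ must therefore be exploited. For this I would invoke the graphical-group machinery of \cite{cico} controlling the generating functions that record kernel sizes of commutator maps of graphical group schemes; alternatively, one could attempt an inductive approach along vertex-deletion or duplication operations on $\Gamma$. Once polynomiality of each stratum is secured, the displayed identity exhibits $\concnt(\GG_\Gamma(\FF_q))$ as a $\ZZ$-linear combination of polynomials in $q$, yielding the required $f_\Gamma \in \QQ[X]$.
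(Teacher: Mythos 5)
Your first two steps are correct and coincide with the paper's own reduction. The orbit-counting identity for class-$2$ groups is valid, the identifications $G/D\cong\FF_q^n$ and $D\cong\FF_q^m$ are exactly Proposition~\ref{prop:centralisers}\ref{prop:centralisers3}, and your $\phi_{\bar g}$ is the specialisation $\check A(\bar g)$ of the commutator matrix appearing in Lemma~\ref{lem:cc_zeta_via_Adj}; summing the paper's identity $\cc_{q^i}(\GG_\Gamma(\FF_q))=\#\{x:\rank_{\FF_q}\check A(x)=i\}\cdot q^{m-i}$ over $i$ gives precisely your displayed formula. (Your elementary derivation of that identity is arguably cleaner than quoting \cite[Thm~B]{O'BV15}, which forces the paper into a separate discussion of even $q$.) The genuine gap is the third step, which you correctly flag as the chief obstacle but do not resolve. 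The missing idea is Lemma~\ref{lem:dim_Adjx}: the rank of $\phi_{\bar g}$ depends \emph{only on the support} $U=\supp(\bar g)$, and equals $\card{\Nbh_\Gamma[U]}-\compcnt_\Gamma(U)$. Once this is known, each rank stratum is a disjoint union of support strata $\{\bar g:\supp(\bar g)=U\}$, each of cardinality $(q-1)^{\card U}$, so polynomiality is immediate and one even obtains the closed formula of Corollary~\ref{cor:univ_formula}. The proof of the lemma is short and concrete: the relations attached to edges joining $U$ to its exterior force every $\std_w$ with $w\in\Nbh_\Gamma[U]\setminus U$ into the relation submodule, the basis vectors outside $\Nbh_\Gamma[U]$ are untouched, and on each connected component of $\Gamma[U]$ a spanning-tree argument shows the corresponding cokernel is one-dimensional.

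Neither of your proposed fallbacks fills this hole. Appealing to ``the graphical-group machinery of \cite{cico}'' is circular here, since the statement under proof is \cite[Cor.~1.3]{cico} itself, and the mechanism there is an elaborate recursion rather than the clean support computation that makes the present argument work. An induction on vertex deletion is left unexplored and would in effect have to control how $\card{\Nbh_\Gamma[U]}-\compcnt_\Gamma(U)$ behaves under deletion, i.e.\ re-derive the same lemma. You are right that rank strata of general matrices of linear forms need not be polynomial-count (cf.\ the Belkale--Brosnan phenomenon mentioned in \S\ref{ss:Irr}), which is exactly why the argument cannot be closed without exploiting the specific graphical shape of $\phi_{\bar g}$; as it stands, the proposal reduces the theorem to the hard part without proving it.
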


We call $f_\Gamma(X)$ the \emph{class-counting polynomial} of~$\Gamma$.
In \cite{cico}, Theorem~\ref{thm:cico/poly} is derived from
a more general uniformity result~\cite[Cor.~B]{cico} for class-counting zeta
functions associated with graphical group schemes; see \S\ref{ss:cc_zetas}.
Formulae for $f_\Gamma(X)$ when $\Gamma$ has at most $5$ vertices can be
deduced from the tables in \cite[\S 9]{cico}.
Moreover, several families of class-counting polynomials have been previously
computed in the literature.

\begin{ex}
  \label{ex:univ_CG}
  O'Brien and Voll~\cite[Thm~2.6]{O'BV15} gave a formula for the number of
  conjugacy classes of given size of $p$-groups derived from free nilpotent
  Lie algebras via the Lazard correspondence.
  Using the interpretation of $\GG_{\CG_n}(\FF_p)$ in
  Example~\ref{ex:graphical_relatives}\ref{ex:graphical_relatives_complete},
  their formula or, alternatively, work of Ito and Mann~\cite[\S 1]{IM06}
  yields $f_{\CG_n}(X) = X^{\binom {n-1} 2}(X^n + X^{n-1}-1)$.
\end{ex}

\begin{ex}
  \label{ex:paths}
  In light of
  Example~\ref{ex:graphical_relatives}\ref{ex:graphical_relatives_path},
  Marjoram's enumeration \cite[Thm~7]{Mar99} of the irreducible characters of
  given degree of the maximal class-$2$ quotients of $\Uni_d(\FF_q)$ yields
  \begin{equation}
    \label{eq:f_path}
    \displaystyle
    f_{\Path n}(X) =
    \mathlarger{\sum}_{a=0}^{\left\lfloor \frac n 2\right\rfloor}
    \left(\binom{n-a} a X^{n-a-1}(X-1)^a
    + \binom{n-a-1}aX^{n-a-1}(X-1)^{a+1}\right). 
  \end{equation}
\end{ex}

For any graph $\Gamma$, the size of each conjugacy class of
$\GG_\Gamma(\FF_q)$ is of the form~$q^i$; see
Proposition~\ref{prop:centralisers}\ref{prop:centralisers1}.
As indicated in \cite[\S 8.5]{cico}, the methods underpinning
Theorem~\ref{thm:cico/poly} can be used to strengthen said theorem: each
$\cc_{q^i}(\GG_\Gamma(\FF_q))$ is a polynomial in $q$ with rational
coefficients.
While constructive, the proof of Theorem~\ref{thm:cico/poly} in \cite{cico}
relies on an elaborate recursion.
In particular, no explicit general formulae for the numbers
$\cc_{q^i}(\GG_\Gamma(\FF_q))$ or the polynomial $f_\Gamma(X)$ have been
previously recorded.

Recall that the \emph{join}\label{d:join} $\Gamma_1\join \Gamma_2$ of graphs
$\Gamma_1$ and~$\Gamma_2$ is obtained from their disjoint union
$\Gamma_1\oplus \Gamma_2$ by adding edges connecting each vertex of $\Gamma_1$
to each vertex of $\Gamma_2$.
Further recall that a \emph{cograph} is any graph that can be obtained from
two cographs on fewer vertices by taking disjoint unions or joins, starting
with an isolated vertex.

\begin{thm}[{Cf.\ \cite[Theorem E]{cico}}]
  \label{thm:cico/cograph_nonneg}
  Let $\Gamma$ be a cograph.
  Then the coefficients of $f_\Gamma(X)$ as a polynomial in $X-1$ are
  non-negative integers.
\end{thm}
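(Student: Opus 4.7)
The plan is strong induction on $\lvert V(\Gamma) \rvert$ using the cotree decomposition of cographs. Because the bare statement is slightly too weak to recurse through disjoint unions into subsequent joins, I would strengthen the induction hypothesis to assert that \emph{both} $f_\Gamma(X)$ and $f_\Gamma(X) - X^{\lvert E(\Gamma) \rvert}$ have non-negative coefficients in $X - 1$. The base case $\Gamma = \Point$ is immediate, since $f_\Point(X) = X = 1 + (X - 1)$ and $f_\Point(X) - X^0 = X - 1$.

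For $\Gamma = \Gamma_1 \oplus \Gamma_2$, the defining relations of $\GG_\Gamma$ separate cleanly, so $\GG_\Gamma(R) \cong \GG_{\Gamma_1}(R) \times \GG_{\Gamma_2}(R)$ and $f_\Gamma = f_{\Gamma_1} f_{\Gamma_2}$. The strengthened hypothesis persists under products via the rewriting
\[
f_1 f_2 - X^{\lvert E_1 \rvert + \lvert E_2 \rvert} = f_1 \bigl(f_2 - X^{\lvert E_2 \rvert}\bigr) + X^{\lvert E_2 \rvert}\bigl(f_1 - X^{\lvert E_1 \rvert}\bigr),
\]
both of whose summands are non-negative in $X-1$ by the inductive hypothesis.

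The substantive case is $\Gamma = \Gamma_1 \join \Gamma_2$. I would pass to the associated class-$2$ nilpotent Lie algebra $\fg_\Gamma$ (via Baer--Lazard) and use that, because $[\fg_\Gamma, \fg_\Gamma]$ has $\FF_q$-rank $\lvert E(\Gamma) \rvert$ (cf.\ Proposition~\ref{prop:centralisers} alluded to in the excerpt), a direct Burnside computation yields
\[
f_\Gamma(X) = \sum_{\bar x} X^{\lvert E(\Gamma) \rvert - \rank \operatorname{ad}_{\fg_\Gamma}(\bar x)}
\]
with $\bar x$ ranging over $\fg_\Gamma / [\fg_\Gamma, \fg_\Gamma]$. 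For the join, this abelianisation is $V_1 \oplus V_2$ while $[\fg_\Gamma, \fg_\Gamma]$ gains the ``coupling summand'' $V_1 \otimes V_2$ from the join edges. A short kernel analysis shows that $\rank \operatorname{ad}(\bar x_1 + \bar x_2)$ depends only on the internal ranks and on which of the $\bar x_i$ vanish; crucially, when both are nonzero the coupling equation $\bar x_1 \otimes y_2 = y_1 \otimes \bar x_2$ forces the kernel to be the one-dimensional line through $(\bar x_1, \bar x_2)$, so the rank becomes the constant $\lvert V_1 \rvert + \lvert V_2 \rvert - 1$. Partitioning the summation by the four strata determined by $(\bar x_1 = 0?,\, \bar x_2 = 0?)$ produces
\[
f_{\Gamma_1 \join \Gamma_2}(X) - X^{\lvert E \rvert} = X^{a_1}\bigl(f_{\Gamma_1}(X) - X^{\lvert E_1 \rvert}\bigr) + X^{a_2}\bigl(f_{\Gamma_2}(X) - X^{\lvert E_2 \rvert}\bigr) + X^{a_{12}}(X^{\lvert V_1 \rvert} - 1)(X^{\lvert V_2 \rvert} - 1)
\]
for explicit non-negative integer exponents $a_1, a_2, a_{12}$. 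Since both $X^a$ and $X^n - 1 = (X-1)\sum_{k=0}^{n-1} X^k$ have non-negative coefficients in $X-1$, and the factors $f_{\Gamma_i}(X) - X^{\lvert E_i \rvert}$ do by induction, both clauses of the strengthened hypothesis hold for $\Gamma$.

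The main obstacle is the rank computation for the join together with the recognition that the induction must be strengthened to include the $f_\Gamma(X) - X^{\lvert E(\Gamma) \rvert}$ clause; once these are in place the combinatorial bookkeeping is routine.
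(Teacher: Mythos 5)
Your proof is correct, but it takes a genuinely different route from the one this paper offers. Here the statement is quoted from \cite{cico} and is then subsumed by a stronger result valid for \emph{all} graphs: Corollary~\ref{cor:univ_formula} gives the closed formula $f_\Gamma(X)=\sum_{U\subset V}(X-1)^{\card U}X^{\,m+\compcnt_\Gamma(U)-\card{\Nbh_\Gamma[U]}}$, in which every exponent of $X$ is non-negative (because $\card{\Nbh_\Gamma[U]}-\compcnt_\Gamma(U)\le\rank(\Gamma)\le m$), so non-negativity in $X-1$ --- even refined by class size, Corollary~\ref{cor:nonneg} --- drops out of a one-line binomial expansion of $X=(X-1)+1$, with no induction and no cograph hypothesis. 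You instead induct over the cotree: the disjoint-union step is standard multiplicativity, and your join recursion, obtained from the Burnside identity $f_\Gamma(q)=\sum_{\bar x} q^{\,m-\rank \operatorname{ad}(\bar x)}$ together with the rank stratification by which of $\bar x_1,\bar x_2$ vanish, reproduces exactly Corollary~\ref{cor:univ_join} of the paper; your strengthened hypothesis that $f_\Gamma(X)-X^{m}$ is also non-negative in $X-1$ is precisely what is needed to push the recursion through the join step, and in the paper's closed formula it corresponds to discarding the $U=\emptyset$ term. Two small remarks: your appeal to Baer--Lazard is unnecessary (and delicate at $p=2$); the identification of group centralisers with Lie centralisers holds over every $\FF_q$ by Proposition~\ref{prop:centralisers}, and in any case restricting to odd $q$ already determines the polynomial identity. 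What your approach buys is self-containedness --- it needs neither adjacency modules nor Theorem~\ref{thm:formula}, and it is presumably close in spirit to the original argument in \cite{cico}; what it gives up is generality, since the cotree decomposition confines it to cographs, whereas the paper's formula shows the conclusion holds for every graph.
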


Theorems~\ref{thm:cico/poly} and \ref{thm:cico/cograph_nonneg} are special
cases of more general results pertaining to class numbers of graphical groups
$\GG_\Gamma(\fO/\fP^i)$, where $\fO$ is a compact discrete valuation ring with
maximal ideal $\fP$.
We will briefly discuss this topic in \S\ref{s:app}.

\subsection[The graph polynomials]{The graph polynomials $\Biv_\Gamma(X,Y)$ and $\CSP_\Gamma(X,Y)$}

As before, let $\Gamma = (V,E)$ be a graph.
Prior to stating our results, we first define what, to the author's knowledge,
appears to be a new graph polynomial.
For a (not necessarily proper) subset $U\subset V$, let $\Gamma[U]$ be the
induced subgraph of $\Gamma$ with vertex set $U$.
Let $\compcnt_\Gamma(U)$ denote the number of connected components of
$\Gamma[U]$.
(We allow $U = \emptyset$ in which case $\compcnt_\Gamma(U) = 0$.)
The \emph{closed neighbourhood} $\Nbh_\Gamma[v] \subset V$ of 
$v\in V$ consists of~$v$ and all vertices adjacent to it.
For $U\subset V$, write $\Nbh_\Gamma[U] = \bigcup_{u\in U}\Nbh_\Gamma[u]$.
Define
\begin{equation}
  \label{eq:Biv}
  \Biv_\Gamma(X,Y) =
  \sum_{U\subset V} (X-1)^{\card{U}}\, Y^{\card{\Nbh_\Gamma[U]} - \compcnt_\Gamma(U)}
  \in \ZZ[X,Y].
\end{equation}

\begin{rem}
  While $\Biv_\Gamma(X,Y)$ resembles the Tutte polynomial of a matroid on the
  ground set $V$, it is unclear to the author whether this is more than a
  formal similarity.
  Similarly, $\Biv_\Gamma(X,Y)$ is reminiscent of the subgraph
  polynomial~\cite[\S 3]{Ren02} of $\Gamma$.
\end{rem}

Let $\Gamma$ have $n$ vertices, $m$ edges, and $c$ connected components.
Recall that the \emph{(matroid) rank} of~$\Gamma$ is $\rank(\Gamma) = n-c$.
Define the \emph{class-size polynomial} of $\Gamma$ to be
\begin{equation}
  \label{eq:csp}
  \CSP_\Gamma(X,Y) = X^m \Biv_\Gamma(X,X^{-1}Y)
  = \sum_{U\subset V} (X-1)^{\card U} X^{m + \compcnt_\Gamma(U)
    -\card{\Nbh_\Gamma[U]}} Y^{\card{\Nbh_\Gamma[U]}-\compcnt_\Gamma(U)}.
\end{equation}
The degree of $\Biv_\Gamma(X,Y)$ as a polynomial in $Y$ is $\rank(\Gamma)$;
see Proposition~\ref{prop:deg_Y}.
As $\rank(\Gamma) \le m$, we conclude that $\CSP_\Gamma(X,Y) \in \ZZ[X,Y]$.
While $\Biv_\Gamma(X,Y)$ and $\CSP_\Gamma(X,Y)$ determine each other,
$\Biv_\Gamma(X,Y)$ is often more convenient to work with and
$\CSP_\Gamma(X,Y)$ turns out to be more directly related to the enumeration of
conjugacy classes; see Theorem~\ref{thm:formula}.

\begin{ex}
  \label{ex:CG_DG}
  \quad
  \begin{enumerate}[(i)]
  \item
    \label{ex:CG_DG_1}
    $\Biv_{\CG_n}(X,Y) = (X^n -1) Y^{n-1} + 1$
    and 
    $\CSP_{\CG_n}(X,Y) = \Bigl(X^{\binom n 2+1}-X^{\binom{n-1}2}\Bigr)Y^{n-1}
    + X^{\binom n2}$.
  \item
    \label{ex:CG_DG_2}
    $\Biv_{\DG_n}(X,Y) = X^n = \CSP_{\DG_n}(X,Y)$.
  \end{enumerate}
\end{ex}

\subsection{Main results}

Let $\Gamma$ be a graph.
The main result of this article justifies the term ``class-size polynomial''.

\begin{thmabc}
  \label{thm:formula}
  $\CSP_\Gamma(q,Y) = \sum\limits_{i=0}^\infty \cc_{q^i}(\GG_\Gamma(\FF_q)) Y^i$
  for each prime power $q$.
\end{thmabc}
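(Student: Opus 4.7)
My plan is to exploit the fact that $G := \GG_\Gamma(\FF_q)$ is nilpotent of class at most $2$, so that conjugacy class sizes are governed by the image of the commutator pairing. The defining relations show that $[G,G]$ is generated by the elements $z_{jk}(r)$ for $(j,k)\in J$ and $r \in \FF_q$, whence $[G,G]\cong \FF_q^E$; a short computation with the formula $[x_j(r),x_k(s)] = z_{jk}(rs)$ shows that the centre $Z$ additionally contains each $x_i(r)$ with $v_i$ isolated, and no further classes. Writing $V'\subset V$ for the set of non-isolated vertices and $n' := \card{V'}$, the quotient $\bar G := G/Z$ is thereby canonically identified with $\FF_q^{V'}$, and $\card Z = q^{m+n-n'}$.

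For $g\in G$ with image $\bar g = (r_v)_{v\in V'}\in \bar G$, the commutator map
\[
\omega_{\bar g} \colon \bar G \to [G,G], \qquad (s_v)_{v\in V'}\;\longmapsto\; (r_js_k - r_ks_j)_{(j,k)\in J}
\]
is $\FF_q$-linear (by the class-$2$ hypothesis), its kernel is $C_G(g)/Z$, and hence the conjugacy class of $g$ has size $q^{\rank\omega_{\bar g}}$. The key step is to establish the graph-theoretic identity
\[
\rank(\omega_{\bar g}) \;=\; \card{\Nbh_\Gamma[U]} - \compcnt_\Gamma(U), \qquad U := \supp(\bar g)\subset V'.
\]
I would prove this by analysing $\ker \omega_{\bar g}$ directly, splitting each edge $(j,k)\in J$ according to how its endpoints meet $U$: edges inside $U$ force $s_v/r_v$ to be constant on each component of $\Gamma[U]$ (contributing $\compcnt_\Gamma(U)$ free parameters), edges joining $U$ to $V\setminus U$ force $s_v = 0$ on $\Nbh_\Gamma[U]\setminus U$, and the remaining $n' - \card{\Nbh_\Gamma[U]}$ coordinates are unconstrained. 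Summing these dimensions and subtracting from $n'$ yields the claimed rank.

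With the rank formula in hand, the generating function is evaluated as
\[
\sum_{i\ge 0} \cc_{q^i}(G)\, Y^i \;=\; \sum_{g\in G} \frac{Y^{i(g)}}{q^{i(g)}} \;=\; \card Z \sum_{\bar g\in \bar G} \frac{Y^{i(\bar g)}}{q^{i(\bar g)}},
\]
with $i(\bar g)$ depending only on $U := \supp(\bar g)$; grouping by this support then contributes the weight $(q-1)^{\card U}$ per subset $U\subset V'$. To match the result with $\CSP_\Gamma(q,Y)$, whose definition sums over all $U\subset V$, I would verify the bookkeeping that appending an isolated vertex to $U$ multiplies the summand by $(q-1)$ while preserving both $\card{\Nbh_\Gamma[U]}-\compcnt_\Gamma(U)$ and $m+\compcnt_\Gamma(U)-\card{\Nbh_\Gamma[U]}$; summing over subsets of $V\setminus V'$ therefore absorbs the remaining prefactor $q^{n-n'}$ exactly, yielding $\CSP_\Gamma(q,Y)$. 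The substantive obstacle is the rank computation in the second paragraph; the remainder reduces to careful but routine bookkeeping.
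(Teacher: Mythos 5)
Your proposal is correct, and it reaches the same combinatorial core as the paper by a more self-contained route. The decisive step in both arguments is identical in content: your rank formula $\rank(\omega_{\bar g}) = \card{\Nbh_\Gamma[U]} - \compcnt_\Gamma(U)$ is exactly the paper's Lemma on adjacency modules, which computes $\dim(\Adj(\Gamma)_x) = \compcnt_\Gamma(U) + \card V - \card{\Nbh_\Gamma[U]}$ for $U = \supp(x)$ (your $\ker\omega_{\bar g}$ is the annihilator of the submodule of relations $x_v\std_w - x_w\std_v$, so the two statements are equivalent); your direct kernel analysis by edge type is a clean substitute for the paper's spanning-tree argument. Where you genuinely diverge is in converting class sizes into ranks: the paper invokes O'Brien--Voll's Theorem~B on commutator matrices, which goes through the Lazard correspondence and therefore forces a separate discussion of even $q$ (resolved there by appealing to the internals of the O'Brien--Voll proof), whereas you compute centralisers directly in the class-$2$ group, using only bilinearity of the commutator and the identification of $Z$ and $[G,G]$ --- facts the paper itself records in its proposition on centralisers. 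This makes your argument uniform in the characteristic and free of external black boxes, at the modest cost of redoing by hand the standard identity $\sum_i \cc_{q^i}(G)Y^i = \sum_{g\in G} (Y/q)^{i(g)}$ and the bookkeeping for isolated vertices (which you handle correctly: adjoining isolated vertices to $U$ changes neither $\card{\Nbh_\Gamma[U]}-\compcnt_\Gamma(U)$ nor the exponent of $X$, and summing over them absorbs the factor $q^{n-n'}$ from $\card Z$). One cosmetic slip: you say the centre contains the $x_i(r)$ for isolated $v_i$ ``and no further classes''; you mean no further elements modulo $[G,G]$.
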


Note that $\cc_{q^i}(\GG_\Gamma(\FF_q)) = 0$ for all sufficiently large $i$
so Theorem~\ref{thm:formula} asserts an equality of polynomials in $Y$.

\begin{ex}
    The formula in \cite[Thm~2.6]{O'BV15} referred to in
    Example~\ref{ex:univ_CG} shows that if $q$ is an odd prime power, then
    $\GG_{\CG_n}(\FF_q)$ has a centre of order $q^{\binom{n} 2}$ and precisely
    $(q^n-1)q^{\binom{n-1} 2}$ non-trivial conjugacy classes, all of size
    $q^{n-1}$.
    These numbers agree with Example~\ref{ex:CG_DG}\ref{ex:CG_DG_1}.
    Clearly,
    Example~\ref{ex:CG_DG}\ref{ex:CG_DG_2} agrees
    with the fact that $\GG_{\DG_n}(\FF_q) \approx \FF_q^n$ is abelian.
\end{ex}

Theorem~\ref{thm:formula} provides us with the following explicit formula for the
class-counting polynomial $f_\Gamma(X)$ defined in Theorem~\ref{thm:cico/poly}.

\begin{corabc}
  \label{cor:univ_formula}
  $\displaystyle f_\Gamma(X) 
  = \CSP_\Gamma(X,1)
  = \sum_{U\subset V} (X-1)^{\card U}\, X^{m+\compcnt_\Gamma(U)-\card{\Nbh_\Gamma[U]}}$.
  \qed
\end{corabc}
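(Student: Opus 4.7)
The plan is to derive the corollary by specializing Theorem~\ref{thm:formula} at $Y=1$ and then reading off the definition of $\CSP_\Gamma(X,Y)$.

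First, I would set $Y=1$ in Theorem~\ref{thm:formula}. The right-hand side becomes
\[
\sum_{i=0}^\infty \cc_{q^i}(\GG_\Gamma(\FF_q)) = \concnt(\GG_\Gamma(\FF_q)),
\]
since, as recalled in~\S\ref{ss:intro/known}, every conjugacy class of $\GG_\Gamma(\FF_q)$ has size a power of $q$ (Proposition~\ref{prop:centralisers}\ref{prop:centralisers1}), so all class sizes are accounted for by the indices $q^i$. By Theorem~\ref{thm:cico/poly} this common value equals $f_\Gamma(q)$. Hence $\CSP_\Gamma(q,1) = f_\Gamma(q)$ for every prime power $q$.

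Next, since $\CSP_\Gamma(X,1)$ and $f_\Gamma(X)$ are both elements of $\QQ[X]$ that agree on the infinite set of prime powers, they are equal as polynomials in $X$. This yields the first claimed equality.

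Finally, the explicit formula is just the definition~\eqref{eq:csp} of $\CSP_\Gamma(X,Y)$ evaluated at $Y=1$: the factor $Y^{\card{\Nbh_\Gamma[U]}-\compcnt_\Gamma(U)}$ disappears, leaving
\[
\CSP_\Gamma(X,1) = \sum_{U\subset V}(X-1)^{\card U} X^{m + \compcnt_\Gamma(U) - \card{\Nbh_\Gamma[U]}},
\]
which completes the corollary. There is no real obstacle here since Theorem~\ref{thm:formula} does all the heavy lifting; the only point requiring a moment's care is the finiteness of the sum over~$i$, which is guaranteed by the class sizes being powers of~$q$.
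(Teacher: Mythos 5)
Your proof is correct and follows the same route the paper intends: the corollary is stated with a \qed precisely because it is the immediate specialisation $Y=1$ of Theorem~\ref{thm:formula}, combined with Proposition~\ref{prop:centralisers}\ref{prop:centralisers1} (all class sizes are powers of $q$) and the fact that two polynomials agreeing at all prime powers coincide. The only nitpick is your closing remark: finiteness of the sum over $i$ comes from the group being finite (so $\cc_{q^i}=0$ for large $i$), while the power-of-$q$ property is what ensures the sum captures \emph{all} conjugacy classes --- but you invoke the latter correctly where it matters.
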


Note that Corollary~\ref{cor:univ_formula} shows that $f_\Gamma(X)$ has
integer coefficients.
In the spirit of work surrounding Higman's conjecture (see
\S\ref{ss:intro/higman}) and Theorem~\ref{thm:cico/cograph_nonneg},
Theorem~\ref{thm:formula} implies the following refinement of the preceding
observation.

\begin{corabc}
  \label{cor:nonneg}
  For each $e\ge 1$, the number of conjugacy classes of $\GG_\Gamma(\FF_q)$ of
  size $e$ is given by a polynomial in $q-1$ with non-negative integer
  coefficients.
\end{corabc}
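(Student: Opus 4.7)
The plan is to read the statement off directly from Theorem~\ref{thm:formula} together with the explicit formula (\ref{eq:csp}). Since every conjugacy class of $\GG_\Gamma(\FF_q)$ has $q$-power size (Proposition~\ref{prop:centralisers}\ref{prop:centralisers1}), only the case $e = q^i$ is non-trivial, and in that case Theorem~\ref{thm:formula} identifies $\cc_{q^i}(\GG_\Gamma(\FF_q))$ with the evaluation at $X=q$ of
\[
C_i(X) \;=\; \sum_{\substack{U\subset V \\ \card{\Nbh_\Gamma[U]}-\compcnt_\Gamma(U) = i}} (X-1)^{\card U}\, X^{m+\compcnt_\Gamma(U) - \card{\Nbh_\Gamma[U]}},
\]
the coefficient of $Y^i$ in $\CSP_\Gamma(X,Y)$. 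The task thus reduces to showing that $C_i(X)$ is a polynomial in $X-1$ with non-negative integer coefficients.

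The key substantive step is to verify that every exponent $m + \compcnt_\Gamma(U) - \card{\Nbh_\Gamma[U]}$ appearing in the sum is non-negative. I would prove this combinatorially: each vertex of $\Nbh_\Gamma[U]\setminus U$ is by definition adjacent to some vertex of $U$, so passing from $\Gamma[U]$ to $\Gamma[\Nbh_\Gamma[U]]$ can only merge components. Hence the number of connected components of $\Gamma[\Nbh_\Gamma[U]]$ is at most $\compcnt_\Gamma(U)$, which makes $\card{\Nbh_\Gamma[U]} - \compcnt_\Gamma(U)$ an upper bound on the number of edges in any spanning forest of $\Gamma[\Nbh_\Gamma[U]]$ and therefore bounded above by~$m$.

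Given this, each summand in $C_i(X)$ takes the form $(X-1)^a X^b$ with $a,b \ge 0$. The binomial expansion $X^b = \bigl((X-1)+1\bigr)^b$ then rewrites it as a polynomial in $X-1$ with non-negative integer coefficients, a property closed under summation, so $C_i(X)$ inherits it as well. Evaluating at $X = q$ yields the corollary. Once Theorem~\ref{thm:formula} is in hand, this is essentially a direct unpacking of (\ref{eq:csp}); the only mild subtlety is the non-negativity of the exponents of $X$, addressed above.
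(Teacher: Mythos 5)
Your proof is correct and follows essentially the same route as the paper, which simply expands powers of $X=(X-1)+1$ in \eqref{eq:csp} via the binomial theorem. The non-negativity of the exponents $m+\compcnt_\Gamma(U)-\card{\Nbh_\Gamma[U]}$, which you verify combinatorially, is exactly the observation the paper makes just after \eqref{eq:csp} (via Proposition~\ref{prop:deg_Y} and $\rank(\Gamma)\le m$) to conclude $\CSP_\Gamma(X,Y)\in\ZZ[X,Y]$.
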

\begin{proof}
  Use the binomial theorem to expand powers of $X = (X-1) + 1$ in \eqref{eq:csp}.
\end{proof}

It is natural to ask whether the coefficients referred to in
Corollary~\ref{cor:nonneg} enumerate meaningful combinatorial objects.
Corollary~\ref{cor:lc} will provide a partial answer to this.

We shall not endeavour to improve substantially upon the exponential-time
algorithm for computing $\Biv_\Gamma(X,Y)$ suggested by
equation~\eqref{eq:Biv}.
Indeed, we will obtain the following.

\begin{propabc}
  \label{prop:hard}
  Computing $\Biv_\Gamma(X,Y)$, and hence also $\CSP_\Gamma(X,Y)$, is NP-hard.
\end{propabc}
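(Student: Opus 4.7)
The plan is to extract from $\Biv_\Gamma(X,Y)$ a polynomial enumerator for the connected dominating sets of $\Gamma$, and then invoke the classical NP-hardness of the minimum connected dominating set problem.

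I restrict attention to connected $\Gamma$ on $n \ge 2$ vertices, for which $\rank(\Gamma) = n-1$, so by Proposition~\ref{prop:deg_Y} the polynomial $\Biv_\Gamma(X,Y)$ has degree $n-1$ in~$Y$. For any nonempty $U \subset V$ one has $|\Nbh_\Gamma[U]| \le n$ and $\compcnt_\Gamma(U) \ge 1$, while $U = \emptyset$ contributes only to the $Y^0$ term. Hence the equality $|\Nbh_\Gamma[U]| - \compcnt_\Gamma(U) = n-1$ holds precisely when $\Gamma[U]$ is connected and $\Nbh_\Gamma[U] = V$, i.e.\ when $U$ is a connected dominating set of~$\Gamma$. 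The coefficient of $Y^{n-1}$ in $\Biv_\Gamma(X,Y)$ is therefore
\begin{equation*}
  L_\Gamma(X) = \sum_{U \text{ conn.\ dom.}} (X-1)^{|U|} = \sum_{k \ge 1} c_k(\Gamma)\,(X-1)^k,
\end{equation*}
where $c_k(\Gamma)$ denotes the number of connected dominating sets of $\Gamma$ of size~$k$.

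Given the coefficients of $\Biv_\Gamma(X,Y)$, one reads off $L_\Gamma(X)$ and then, substituting $X = t+1$, recovers every $c_k(\Gamma)$ in polynomial time. In particular, the connected domination number $\gamma_c(\Gamma) = \min\{k \ge 1 : c_k(\Gamma) > 0\}$ is determined, and comparing it to any prescribed $k$ decides the NP-complete problem of whether $\Gamma$ admits a connected dominating set of size at most~$k$. This furnishes a polynomial-time Turing reduction from an NP-complete problem to the computation of $\Biv_\Gamma$; the corresponding hardness for $\CSP_\Gamma$ then follows via the equivalence of polynomials in~\eqref{eq:csp}.

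The only substantive step is the combinatorial identification of $L_\Gamma(X)$ with the connected-dominating-set enumerator. Alternative reductions from, say, the independence or clique number seem less clean: extracting those invariants would require disentangling contributions to coefficients of lower $Y$-degree, where the relevant subsets are mixed with many irrelevant ones. Using the leading $Y$-coefficient sidesteps this interference entirely.
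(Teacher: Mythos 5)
Your argument is correct and is essentially the paper's own proof: the identification of the coefficient of $Y^{n-1}$ (equivalently $Y^{\rank(\Gamma)}$ for connected $\Gamma$) with the connected domination polynomial is exactly Proposition~\ref{prop:dom}, and the reduction from the NP-complete connected dominating set problem is exactly the paper's appeal to Theorem~\ref{thm:conn_dom_NP}. No substantive difference.
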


More precisely, we will see that knowledge of $\Biv_\Gamma(X,Y)$ allows us to
read off the cardinalities of connected dominating sets of $\Gamma$.
The problem of deciding whether a graph admits a connected dominating set of
cardinality at most a given number is known to be NP-complete;
see Theorem~\ref{thm:conn_dom_NP}.

By Theorem~\ref{thm:formula} and Proposition~\ref{prop:hard}, symbolically
enumerating the conjugacy classes of given size of $\GG_\Gamma(\FF_q)$ (as a
polynomial in~$q$) is NP-hard.
The problem of measuring the difficulty of symbolically enumerating
\itemph{all} conjugacy classes of $\GG_\Gamma(\FF_q)$ remains open.
\begin{question}
  \label{qu:computing_f}
  Is computing $f_\Gamma(X)$ NP-hard?
\end{question}

\subsection{Related work: around Higman's conjecture}
\label{ss:intro/higman}

Recall that $\Uni_d\le \GL_d$ denotes the group scheme of upper unitriangular
$d\times d$ matrices.
A famous conjecture due to G.~Higman~\cite{Hig60a} predicts that
$\concnt(\Uni_d(\FF_q))$ is given by a polynomial in $q$ for fixed $d$.
This has been confirmed for $d\le 13$ by Vera-L\'opez and Arregi~\cite{VLA03}
and for $d\le 16$ by Pak and Soffer~\cite{PS15}.
The former authors also showed that the sizes of conjugacy classes of
$\Uni_d(\FF_q)$ are of the form $q^i$ (see \cite[\S 3]{VLA92}) and that
$\cc_{q^i}(\Uni_d(\FF_q))$ is a polynomial in $q-1$ with non-negative integer
coefficients for $i \le d-3$ (see \cite{VLA01}).
Many authors studied variants of Higman's conjecture for unipotent groups
derived from various types of algebraic groups; see e.g.~\cite{GR09}.

While logically independent of the work described here, Higman's conjecture
(and the body of research surrounding it) certainly provided motivation for
topics considered and results obtained in this article
(e.g.\ Corollary~\ref{cor:nonneg}).

\subsection{Open problems: enumerating characters of graphical groups}
\label{ss:Irr}

Let $\Irr(G)$ denote the set of (ordinary) irreducible characters of a finite
group $G$.
It is well known that $\concnt(G) = \card{\Irr(G)}$ (see e.g.\ \cite[V, \S
5]{Hup67}), and the enumeration 
of irreducible characters of a group (according to their degrees) has often
been studied as a ``dual'' of the enumeration of conjugacy classes (according
to their sizes); see e.g.\ \cite{LS05,O'BV15,ask2}.
For odd $q$, \cite[Thm~B]{O'BV15} implies that the degree $\chi(1)$ of each
irreducible character $\chi$ of a graphical group $\GG_\Gamma(\FF_q)$ is of
the form $q^i$.

\begin{question}
  \label{qu:char_count}
  Let $\Gamma$ be a graph and let $i\ge 0$ be an integer.
  How does
  \[
    \mathrm{ch}(\Gamma,i;q) := \#\left\{ \chi\in\Irr(\GG_\Gamma(\FF_q)) : \chi(1) = q^i\right\} 
  \]
  depend on the prime power $q$?
\end{question}

It is known that $\mathrm{ch}(\Gamma,i;q)$ is a polynomial in $q$
for $\Gamma = \DG_n$ (trivially), $\Gamma = \Path n$ (by \cite[Thm~7]{Mar99}),
and $\Gamma = \CG_n$ (for odd $q$; by \cite[Prop.\ 2.4]{O'BV15}.

Let $v_1,\dotsc,v_n$ be the distinct vertices of a graph $\Gamma$.
Let $Y$ consist of algebraically independent variables $Y_{ij}$ (over $\ZZ$)
indexed by pairs $(i,j)$ with $1\le i <
j \le n$ and $v_i \sim v_j$.
Let $B_\Gamma(Y)$ be the antisymmetric $n\times n$ matrix
whose $(i,j)$ entry for $i < j$
is equal to $Y_{ij}$ if $v_i\sim v_j$ and zero otherwise.
(That is, $B_\Gamma(Y)$ is a generic antisymmetric matrix with support
constraints defined by $\Gamma$ as in \cite{cico}.)
Let $m$ be the number of edges of $\Gamma$.
Using an arbitrary ordering, relabel our variables as $Y = (Y_1,\dotsc,Y_m)$.
Then \cite[Thm~B]{O'BV15} shows that for odd $q$,
up to a factor given by an explicit power of $q$ (depending on $\Gamma$ and $i$),
$\mathrm{ch}(\Gamma,i;q)$ coincides with
$\#\!\left\{ y\in \FF_q^m : \rank_{\FF_q}(B_\Gamma(y)) = 2i\right\}$.
In our proof of Theorem~\ref{thm:formula} (see \S\ref{s:proof_formula}), the
number of conjugacy classes of $\GG_\Gamma(\FF_q)$ of given size is similarly
expressed in terms of the number of specialisations of given rank of a matrix
of linear forms.
In that setting, the latter enumeration can be carried out
explicitly using algebraic and graph-theoretic arguments.

It is unclear to the author whether such a line of attack could be used to
answer Question~\ref{qu:char_count}.
Work of Belkale and Brosnan~\cite[Thm~0.5]{BB03} on rank counts for
generic \itemph{symmetric} (rather than antisymmetric) matrices with support
constraints leads the author to suspect that the functions of $q$ considered
in Question~\ref{qu:char_count} might be rather wild as $\Gamma$ and $i$
vary.

\subsection{Overview}

In \S\ref{s:graphical}, we relate the definition of graphical groups from
\S\ref{ss:intro/graphical} to that from \cite{cico}.
Introduced in \cite{cico}, adjacency modules are modules over polynomial
rings whose specialisations are closely related
to conjugacy classes of graphical groups.
In \S\ref{s:Adj}, we determine the dimensions of such specialisations over
fields.
By combining this with work of O'Brien and Voll~\cite{O'BV15}, in
\S\ref{s:proof_formula}, we prove Theorem~\ref{thm:formula}.
In~\S\ref{s:operations}, we show that the polynomials $\Biv_{\Gamma}(X,Y)$ are
well-behaved with respect to joins of graphs.
In~\S\ref{s:const_lt}, we consider the constant term and leading coefficient
of $\Biv_\Gamma(X,Y)$ in $Y$ and we prove Proposition~\ref{prop:hard}.
Next, \S\ref{s:ccp} is devoted to the degree of $f_\Gamma(X)$.
Finally, in \S\ref{s:app}, we relate our findings to the study of zeta
functions enumerating conjugacy classes.

\subsection{\textit{Notation}}
\label{ss:notation}

The symbol ``$\subset$'' indicates not necessarily proper inclusion.
Group commutators are written $[x,y] = x^{-1}y^{-1}xy$.
For a ring $R$ and set $A$, $RA$ denotes the free $R$-module with basis
$(\std_a)_{a\in A}$.
For $x \in RA$, we write $x = \sum_{a\in A} x_a \std_a$.
We view $d\times e$ matrices over~$R$ as maps $R^d\to R^e$ acting by right
multiplication.
We let $\Point$ denote a graph with one vertex.

\section{Graphical groups and group schemes}
\label{s:graphical}

Throughout this section, let $\Gamma = (V,E)$ be a graph with $n$ vertices and
$m$ edges. 
We write $V = \{v_1,\dotsc,v_n\}$ and $J = \{ (j,k) : 1\le j < k \le n,
v_j\sim v_k\}$.
For a ring $R$, let $R\, \Gamma$ denote the free $R$-module of rank $m+n$ with
basis consisting of $\std_1,\dotsc,\std_n$ and all $\std_{jk}$ for $(j,k)\in
J$.
The chosen ordering of $V$ allows us to identify $R\,\Gamma = R V \oplus R E$
(see \S\ref{ss:notation}).

\subsection{Graphical groups over quotients of the integers}
\label{ss:artin}

Recall that the right-angled Artin group associated with the complement of
$\Gamma$ is
\[
  \sA_\Gamma := \left\langle x_1,\dotsc,x_n \mid [x_i,x_j] = 1 \text{ whenever }
    v_i\not\sim v_j \right\rangle.
\]
Let $\gamma_1(H) \geq \gamma_2(H) \ge \dotsb$ denote the lower central
series of a group $H$.
Recall the definition of $\GG_\Gamma(R)$ from \S\ref{ss:intro/graphical}.

\begin{prop}
  \label{prop:ZZ_and_ZZk_points}
  \quad
  \begin{enumerate}[(i)]
  \item
    \label{prop:ZZ_and_ZZk_points1}
    \textup{(Cf.~\cite[Rem.~3.8]{cico}.)} 
    $\GG_\Gamma(\ZZ) \approx \sA_\Gamma/\gamma_3(\sA_\Gamma)$.
  \item
    \label{prop:ZZ_and_ZZk_points2}
    $\GG_\Gamma(\ZZ/N\ZZ) \approx \sA_\Gamma/\gamma_3(\sA_\Gamma)\sA_\Gamma^N$
    if $N\ge 1$ is an odd integer.
  \end{enumerate}
\end{prop}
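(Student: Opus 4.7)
The approach is to construct, in each part, mutually inverse group homomorphisms.

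For part \ref{prop:ZZ_and_ZZk_points1}, define $\phi\colon \sA_\Gamma \to \GG_\Gamma(\ZZ)$ by $x_i \mapsto x_i(1)$. Relation (iii) in the definition of $\GG_\Gamma(\ZZ)$ ensures that the defining relations of $\sA_\Gamma$ are respected, while relations (ii) and (iv) force $\GG_\Gamma(\ZZ)$ to be nilpotent of class at most $2$, so $\phi$ descends to $\sA_\Gamma/\gamma_3(\sA_\Gamma)$. Conversely, define $\psi\colon \GG_\Gamma(\ZZ) \to \sA_\Gamma/\gamma_3(\sA_\Gamma)$ on generators by $x_i(r) \mapsto x_i^r$ and $z_{jk}(r) \mapsto [x_j, x_k]^r$ and check the four defining families: the scalar identities are automatic; the commutator relation $[x_j(r), x_k(r')] = z_{jk}(rr')$ becomes the identity $[x_j^r, x_k^{r'}] = [x_j, x_k]^{rr'}$ valid in any class-$2$ group; relation (iii) holds because $[x_i, x_{i'}] = 1$ in $\sA_\Gamma$ whenever $v_i \not\sim v_{i'}$; and centrality is immediate since commutators lie in $\gamma_2(\sA_\Gamma)$. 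The maps $\phi$ and $\psi$ agree with the identity on generators and are therefore mutually inverse.

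For part \ref{prop:ZZ_and_ZZk_points2}, the same recipe applies, but one must additionally check that $\phi\colon \sA_\Gamma \to \GG_\Gamma(\ZZ/N\ZZ)$ kills $\sA_\Gamma^N$, i.e.\ that $g^N = 1$ holds for every element $g \in \GG_\Gamma(\ZZ/N\ZZ)$ and not merely for the generating symbols. This is where the hypothesis that $N$ be odd enters. A straightforward induction on $k$, using the standard class-$2$ identity $(ab)^N = a^N b^N [b,a]^{\binom N 2}$, shows that in any class-$2$ nilpotent group one has
\[
  (a_1 \cdots a_k)^N = a_1^N \cdots a_k^N \cdot \prod_{1 \le i < j \le k} [a_j, a_i]^{\binom N 2}.
\]
Applied to the normal form $g = x_1(r_1) \cdots x_n(r_n) \cdot \zeta$, where $\zeta$ is a central product of $z_{jk}(s_{jk})$'s, each factor $x_i(r_i)^N$ and $\zeta^N$ vanishes because multiplication by $N$ annihilates $\ZZ/N\ZZ$, while every commutator contribution is of the shape $z_{jk}(\pm r_i r_j)^{\binom N 2}$; since $\binom N 2 = N(N-1)/2$ is a multiple of $N$ precisely when $N$ is odd, these contributions also vanish, yielding $g^N = 1$.

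With this in hand, $\phi$ descends to a homomorphism $\sA_\Gamma/\gamma_3(\sA_\Gamma)\sA_\Gamma^N \to \GG_\Gamma(\ZZ/N\ZZ)$, and $\psi$ is defined on the quotient by lifting $r \in \ZZ/N\ZZ$ arbitrarily to $\ZZ$; ambiguity in the lift disappears modulo $\sA_\Gamma^N$. The mutual-inverse verification then proceeds exactly as in part \ref{prop:ZZ_and_ZZk_points1}. The main obstacle, and the only place where oddness of $N$ intervenes, is the divisibility $N \mid \binom N 2$; were $N$ even, the commutator terms in the collection formula would produce non-trivial $z_{jk}$-contributions and $\phi$ would fail to kill $\sA_\Gamma^N$.
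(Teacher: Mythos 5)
Your proof is correct and follows essentially the same route as the paper: both arguments amount to matching the presentation of $\GG_\Gamma$ against that of $\sA_\Gamma/\gamma_3(\sA_\Gamma)$ via $x_i\leftrightarrow x_i(1)$, and both hinge on the class-$2$ identity $(ab)^N=a^Nb^N[b,a]^{\binom N2}$ together with $\divides{N}{\binom N2}$ for odd $N$ to show that all $N$-th powers die. The only (cosmetic) difference is that the paper deduces (ii) from (i) by identifying $\GG_\Gamma(\ZZ/N\ZZ)$ with $\GG_\Gamma(\ZZ)/\GG_\Gamma(\ZZ)^N$, whereas you build the inverse homomorphisms for (ii) directly.
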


\begin{proof}
  Part~\ref{prop:ZZ_and_ZZk_points1} follows since $x_i(r) = x_i(1)^r$ and
  $z_{jk}(r) = z_{jk}(1)^r$ in $\GG_\Gamma(\ZZ)$ for $r\in\ZZ$, $i\in [n]$,
  and $(j,k)\in J$.
  Let $G = \langle X \rangle$ be a nilpotent group with $\gamma_3(G) = 1$.
  As is well known (and easy to see), $[ab,c] = [a,c][b,c]$ and
  $(ab)^N = a^Nb^N [b,a]^{\binom N 2}$ for $a,b,c\in G$;
  cf.\ \cite[III, Hilfssatz~1.2c) and Hilfssatz~1.3b)]{Hup67}.
  Let $N$ be odd so that $\divides N {\binom N 2}$,
  Then, if $x^N = 1$ for all $x\in X$, we find that $a^N = 1$ for all $a\in
  G$.
  Taking $X = \{x_1(1),\dotsc,x_n(1)\}$ and $G = \GG_\Gamma(\ZZ/N \ZZ)$,
  we obtain
  $\GG_\Gamma(\ZZ/N\ZZ)  \approx \GG_\Gamma(\ZZ)/\left\langle
    x_1(1)^N,\dotsc,x_n(1)^N\right\rangle
  \approx
  \GG_\Gamma(\ZZ)/\GG_\Gamma(\ZZ)^N \approx
  \sA_\Gamma/\gamma_3(\sA_\Gamma)\sA_\Gamma^N$, which proves
  \ref{prop:ZZ_and_ZZk_points2}.
\end{proof}

\subsection[Graphical group schemes]{Graphical group schemes following \cite{cico}}
\label{ss:cico/graphical}

We summarise the construction of the graphical group scheme $\HH_\Gamma$ from
\cite[\S 3.4]{cico} (denoted by $\GG_\Gamma$ in \cite{cico}).
For a ring $R$, the underlying set of the group $\HH_\Gamma(R)$ is $R\,
\Gamma$.
The group operation $*$ is characterised as follows:
\begin{enumerate}[(\textsf{G}1)]
\item
  $0\in R\, \Gamma$ is the identity element of $\HH_\Gamma(R)$.
\item
  For all $r_1,\dotsc,r_n\in R$, we have $r_1 \std_1 * \dotsb * r_n\std_n =
  r_1\std_1 + \dotsb + r_n \std_n$.
\item
  For $1 \le i \le j \le n$ and $r,s\in R$, we have
  \[
    s \std_j * r \std_i = \begin{cases}
      r \std_i + s \std_j - rs \std_{ij}, & \text{if } v_i\sim v_j,\\
      r\std_i + s\std_j, & \text{otherwise.}
    \end{cases}
  \]
\item
  For all $x\in R\,\Gamma$ and $z\in RE\subset R\,\Gamma$, we have $x*z = z*x = x+z$.
\end{enumerate}

Given a ring map $R \to R'$, the induced map $R\, \Gamma \to R'\, \Gamma$ is a
group homomorphism $\HH_\Gamma(R) \to \HH_\Gamma(R')$.
The resulting group functor $\HH_\Gamma$ represents the \emph{graphical group
scheme} constructed in \cite[\S 3.4]{cico}.

\subsection{Relating the two constructions of graphical group schemes}
\label{ss:relating_graphical}

The group functors $\GG_\Gamma$ (see \S\ref{ss:intro/graphical}) and
$\HH_\Gamma$ (see \S\ref{ss:cico/graphical}) are naturally isomorphic:

\begin{prop}
  \label{prop:two_graphical_group_schemes}
  For each ring $R$, the map $\theta_R\colon \HH_\Gamma(R) \to \GG_\Gamma(R)$
  given by
  \begin{align*}
    \sum_{i=1}^n r_i\std_i + \sum_{(j,k)\in J} r_{jk} \std_{jk}
    &\mapsto
    x_1(r_1)  \dotsb x_n(r_n) \prod_{(j,k)\in J} z_{jk}(r_{jk})
    &
    (r_i,r_{jk}\in R)
  \end{align*}
  is a group isomorphism.
  These maps combine to form a natural isomorphism of group functors
  $\HH_\Gamma\xto\approx \GG_\Gamma$.
\end{prop}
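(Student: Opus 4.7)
The plan is to construct an inverse map to $\theta_R$ by invoking the universal property of $\GG_\Gamma(R)$ as a group given by generators and relations, and then identify $\theta_R$ with its inverse. Concretely, I would first define a candidate map $\phi_R\colon \GG_\Gamma(R) \to \HH_\Gamma(R)$ on the generators by $x_i(r)\mapsto r\std_i$ and $z_{jk}(r)\mapsto r\std_{jk}$, and verify that the four defining relations of $\GG_\Gamma(R)$ are satisfied by these images inside $\HH_\Gamma(R)$. Relations (i) and (iii) are immediate from~(\textsf{G}2) and~(\textsf{G}3), respectively (no loops in the simple graph $\Gamma$ means the $i=j$ case of (\textsf{G}3) puts us in the non-adjacent branch), and relation (iv) is immediate from~(\textsf{G}4) since every $r\std_{jk}$ lies in $RE$. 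The only calculation of substance is relation (ii): one checks directly by applying (\textsf{G}3) to the single pair where $j<k$ with $v_j\sim v_k$, then using (\textsf{G}4) to move the resulting $rr'\std_{jk}$ term past everything and (\textsf{G}2) to cancel the $\pm r\std_j$ and $\pm r'\std_k$ contributions, that $[r\std_j, r'\std_k] = rr'\std_{jk}$ in $\HH_\Gamma(R)$.

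Once the relations are verified, the universal property of $\GG_\Gamma(R)$ yields a well-defined group homomorphism $\phi_R$. Next I would show that $\phi_R\circ\theta_R = \mathrm{id}_{\HH_\Gamma(R)}$: for $x = \sum r_i\std_i + \sum r_{jk}\std_{jk}$, the element $\theta_R(x)$ maps under $\phi_R$ to $r_1\std_1 * \cdots * r_n\std_n * \prod^{*} r_{jk}\std_{jk}$, and (\textsf{G}2) together with (\textsf{G}4) collapse this product to the original sum $x$. In particular, $\theta_R$ is injective as a set map.

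For surjectivity of $\theta_R$, I would establish a normal-form statement in $\GG_\Gamma(R)$: every word in the generators can be rewritten as $x_1(r_1)\cdots x_n(r_n)\prod_{(j,k)\in J} z_{jk}(r_{jk})$. This is a routine collection argument: use relation (i) to combine like $x_i$ or $z_{jk}$ factors, use the identity $ab = ba[a,b]$ together with relations (ii) and (iii) to move $x_j$ factors past $x_i$ factors for $i<j$ (picking up central $z_{ij}(\pm rs)$ terms), and finally use relation (iv) to collect all $z_{jk}$ factors at the end.

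With $\theta_R$ now shown to be a bijection and $\phi_R$ a one-sided inverse, we conclude $\phi_R = \theta_R^{-1}$, so $\theta_R$ is a group isomorphism. Naturality in $R$ is immediate: a ring map $R\to R'$ acts coefficient-wise on $R\,\Gamma \to R'\,\Gamma$, and both the group law~$*$ of $\HH_\Gamma$ and the generator-level description of $\GG_\Gamma$ are manifestly compatible with this, so the square defined by $\theta_R$, $\theta_{R'}$ and the induced maps $\HH_\Gamma(R)\to\HH_\Gamma(R')$, $\GG_\Gamma(R)\to \GG_\Gamma(R')$ commutes. The main obstacle is the commutator computation in relation (ii), where one must carefully apply~(\textsf{G}3) and the centrality from~(\textsf{G}4) in the right order; the other pieces are formal consequences.
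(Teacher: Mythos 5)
Your proposal is correct and follows essentially the same route as the paper: the paper likewise computes $[r\std_i,r'\std_j]$ in $\HH_\Gamma(R)$, uses the presentation of $\GG_\Gamma(R)$ to obtain the homomorphism $\pi_R$ (your $\phi_R$) sending $x_i(r)\mapsto r\std_i$ and $z_{jk}(r)\mapsto r\std_{jk}$, and checks that the two composites are identities. You merely spell out details the paper leaves implicit under ``by construction,'' notably the collection/normal-form argument giving surjectivity of $\theta_R$.
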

\begin{proof}
  By a simple calculation in $\HH_\Gamma(R)$, we find that for $1\le i < j \le
  n$ and $r_i,r_j\in R$,
  \[
    [r_i \std_i ,r_j \std_j] = \begin{cases}r_i r_j \std_{ij}, & \text{if } v_i\sim v_j,\\0,
      & \text{otherwise.}\end{cases}
  \]
  We thus obtain a group homomorphism $\pi_R\colon \GG_\Gamma(R) \to
  \HH_\Gamma(R)$ sending each $x_i(r)$ to $r\std_i$ and each $z_{jk}(r)$ to
  $r \std_{jk}$.
  By construction, $\pi_R \theta_R = \identity_{\HH_\Gamma(R)}$ and
  $\theta_R \pi_R = \identity_{\HH_\Gamma(R)}$.
\end{proof}

We are therefore justified in referring to both $\GG_\Gamma$ and $\HH_\Gamma$
as ``the'' graphical group scheme associated with $\Gamma$.
As a consequence of Proposition~\ref{prop:two_graphical_group_schemes}, each
$g\in \GG_\Gamma(R)$ admits a unique representation
\begin{align*}
  g & = x_1(r_1)\dotsb x_n(r_n) \prod_{(j,k)\in J}z_{jk}(r_{jk}).
  &
     (r_i,r_{jk}\in R)
\end{align*}
In particular, $\GG_\Gamma(R)$ has order $\card{R}^{m+n}$.

\subsection{Centralisers in graphical groups and graphical Lie algebras}
\label{ss:centralisers}

The \emph{graphical Lie algebra} $\fh_\Gamma(R)$ associated with $\Gamma$ over
a ring $R$ is defined by endowing the module $R\, \Gamma$ with the Lie bracket
$(\,\cdot,\cdot\,)$ characterised by the following properties:
\begin{itemize}[$\triangleright$]
\item
  For $1\le j<k\le n$, we have $(\std_{j},\std_{k})= \std_{jk}$ if $(j,k)\in
  J$ and $(\std_j,\std_k)= 0$ otherwise.
\item
  For $1\le i\le n$ and $(j,k),(j',k')\in J$, we have $(\std_i,\std_{jk}) =
  (\std_{jk},\std_{j'k'}) = 0$.
\end{itemize}

We may identify $\fh_\Gamma(R) = \fh_\Gamma(\ZZ) \otimes R$ as Lie
$R$-algebras and $\HH_\Gamma(R) = \fh_\Gamma(R)$ as sets.

Then $\HH_\Gamma$ is the group scheme associated with the Lie algebra
$\fh_\Gamma(\ZZ)$ via the construction from \cite[\S 2.4.1]{SV14}; cf.\
\cite[\S 2.4]{cico}.
In particular, if $2 \in R^\times$, then $\HH_\Gamma(R)$ (and hence
$\GG_\Gamma(R)$) is isomorphic to the group $\exp(\fh_\Gamma(R))$ associated
with $\fh_\Gamma(R)$ via the Lazard correspondence.
It follows that for an odd prime $p$, $\HH_\Gamma(\FF_p)$ is isomorphic to the
finite $p$-group attached to $\Gamma$ by Li and Qiao~\cite{LQ20}.
He and Qiao~\cite[Thm~1.1]{HQ21} showed that for graphs $\Gamma$ and $\Gamma'$
and an odd prime $p$, $\HH_\Gamma(\FF_p)$ and $\HH_{\Gamma'}(\FF_p)$ are
isomorphic if and only if $\Gamma$ and $\Gamma'$ are.

\begin{prop}
  \label{prop:centralisers}
  \quad
  \begin{enumerate}[(i)]
  \item
    \label{prop:centralisers1}
    The group centraliser of $h \in R\, \Gamma$ in $\HH_\Gamma(R)$ and the Lie
    centraliser of $h$ in $\fh_\Gamma(R)$ coincide as sets.
    Hence, the size of each conjugacy class of $\HH_\Gamma(\FF_q)$ is a power of $q$.
  \item
    \label{prop:centralisers2}
    The centres of $\HH_\Gamma(R)$ and $\fh_\Gamma(R)$ coincide as sets.
    The centre of $\fh_\Gamma(R)$ is the submodule of $R\, \Gamma$
    generated by all $\std_{jk}$ for  $(j,k)\in J$ and all $\std_i$ for
    {isolated} vertices~$v_i$.
  \item
    \label{prop:centralisers3}
    $[\HH_\Gamma(R),\HH_\Gamma(R)] = (\fh_\Gamma(R),\fh_\Gamma(R)) = RE$
    and $\HH_\Gamma(R)/[\HH_\Gamma(R),\HH_\Gamma(R)] \approx R V$.
  \end{enumerate}
\end{prop}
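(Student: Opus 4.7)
The plan is to derive all three parts from the single identity
\[ [x,y]_{\HH_\Gamma(R)} = (x,y)_{\fh_\Gamma(R)} \qquad (x,y \in R\,\Gamma), \]
equating the group commutator with the Lie bracket. The first observation is that $RE$ is a central submodule for both structures: for $\fh_\Gamma(R)$ this is the second defining bullet of the bracket $(\,\cdot,\cdot\,)$, and for $\HH_\Gamma(R)$ it is axiom~(\textsf{G}4). Consequently, every element of $R\,\Gamma$ splits uniquely as $g + z$ with $g \in RV$ and $z \in RE$, and central terms can be moved freely, reducing everything to $x,y \in RV$. For $g = \sum_i r_i\std_i$ and $h = \sum_i s_i\std_i$, I would expand $g*h = (r_1\std_1 * \dotsb * r_n\std_n)*(s_1\std_1*\dotsb*s_n\std_n)$ and use (\textsf{G}3) to sort the factors into standard order; each swap of $s_i\std_i$ past $r_j\std_j$ with $i < j$ and $v_i\sim v_j$ produces a correction $-s_i r_j\, \std_{ij}$. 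Comparing with the symmetric computation for $h*g$ yields $g*h - h*g = (g,h)$, and the commutator identity follows after unwinding $x^{-1}*y^{-1}*x*y$ using the centrality of $(x,y) \in RE$. Notably, this works over any~$R$ (no invertibility of $2$ is needed).

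Given the commutator identity, part~\ref{prop:centralisers1} is immediate: the group centraliser of $h$ is $\{g : [g,h]_{\HH_\Gamma(R)} = 0\}$, which coincides with the Lie centraliser. For $R = \FF_q$, the latter is $\FF_q$-linear, hence has order a power of~$q$; orbit--stabiliser then forces every conjugacy class of $\HH_\Gamma(\FF_q)$ to have size a power of~$q$. Part~\ref{prop:centralisers2} likewise reduces to identifying the Lie centre: both $RE$ and $R\,\std_i$ for isolated~$v_i$ are clearly central, and conversely, if $z = \sum r_i\std_i + \sum r_{jk}\std_{jk}$ satisfies $(z,\std_{j'}) = 0$ for every~$j'$, then the linear independence of the $\std_{ij'}$ in $RE$ (read off from the definition of the bracket) forces $r_i = 0$ whenever $v_i$ is non-isolated. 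For part~\ref{prop:centralisers3}, the equality $(\fh_\Gamma(R),\fh_\Gamma(R)) = RE$ is immediate from the definition of the bracket; the commutator identity together with $[r\std_j,s\std_k] = rs\, \std_{jk}$ and the fact that $RE$ is already an additive subgroup by~(\textsf{G}4) gives the corresponding group-theoretic equality. The isomorphism $\HH_\Gamma(R)/RE \approx RV$ then follows from the decomposition $R\,\Gamma = RV \oplus RE$ together with~(\textsf{G}2).

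The main obstacle is the bookkeeping involved in extracting the explicit formula for $g*h$ from the asymmetric law~(\textsf{G}3); once that is in hand, the commutator identity, and with it the entire proposition, follows by routine linear algebra.
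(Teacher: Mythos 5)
Your proposal is correct and follows essentially the same route as the paper: both hinge on establishing that the group commutator in $\HH_\Gamma(R)$ coincides with the Lie bracket of $\fh_\Gamma(R)$ and then reading off all three parts from that identity. The only (cosmetic) difference is that you verify the identity by explicitly sorting factors via (\textsf{G}3), whereas the paper cites the generator-level computation from Proposition~\ref{prop:two_graphical_group_schemes} and extends it using bilinearity of commutators in class-$2$ groups; your write-up also fills in the deductions the paper dismisses with ``all claims follow easily''.
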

\begin{proof}
  The elements $r \std_i$ for $r\in R$ and $i=1,\dotsc,n$ generate
  $\HH_\Gamma(R)$ as a group and $\fh_\Gamma(R)$ as a Lie $R$-algebra.
  As $\HH_\Gamma(R)$ and $\fh_\Gamma(R)$ both have class at most $2$,
  using the calculation from the proof of
  Proposition~\ref{prop:two_graphical_group_schemes},
  we find that for $h_1,h_2\in R\,\Gamma$, the Lie bracket
  $(h_1,h_2)$ coincides with the group commutator $[h_1,h_2]$.
  All claims follow easily from this.
\end{proof}

\section{Adjacency modules}
\label{s:Adj}

Let $\Gamma = (V,E)$ be a graph.
Let $X_V = (X_v)_{v\in V}$ consist of algebraically independent variables over
$\ZZ$.
The \emph{adjacency module} of $\Gamma$ is the $\ZZ[X_V]$-module
\[
  \Adj(\Gamma) := \frac{\ZZ[X_V] V}{\langle X_v \std_w - X_w \std_v : v,w\in V
  \text{ with } v\sim w\rangle}.
\]
These modules were introduced in \cite[\S 3.3]{cico}.
Their study turns out to be closely related to the enumeration of conjugacy
classes of graphical groups; see \cite[\S\S 3.4,6,7]{cico}.
We note that what we call adjacency modules here are dubbed \itemph{negative}
adjacency modules in \cite{cico}.

For a ring $R$ and $x\in R V$, we obtain an $R$-module by specialising
$\Adj(\Gamma)$ in the form
\[
  \Adj(\Gamma)_x := \Adj(\Gamma) \otimes_{\ZZ[X_V]} R_x \approx \frac{ R V } {\langle x_v \std_w - x_w \std_v : v,w\in V
  \text{ with } v\sim w\rangle},
\]
where $R_x$ denotes $R$ regarded as a $\ZZ[X_V]$-algebra via
$X_v r = x_v r$ for $v\in V$ and $r\in R$.

\begin{lemma}
  \label{lem:Adj_disjoint_union}
  Let $\Gamma_i = (V_i,E_i)$ ($i=1,2$) be graphs on disjoint vertex sets.
  Let $\Gamma = \Gamma_1\oplus \Gamma_2 = (V,E)$
  be their disjoint union.
  Let $R$ be a ring and let $x\in R V$.
  Let $x_i\in R V_i$ denote the image of $x$ under the natural projection $R V
  = R V_1 \oplus  R V_2 \to R V_i$.
  Then $\Adj(\Gamma)_x \approx \Adj(\Gamma_1)_{x_1} \oplus
  \Adj(\Gamma_2)_{x_2}$ as $R$-modules.
  \qed
\end{lemma}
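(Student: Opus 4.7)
The plan is to exploit the fact that in a disjoint union, no edge links a vertex of $V_1$ to a vertex of $V_2$, so the defining relations of $\Adj(\Gamma)_x$ split cleanly along the decomposition $RV = RV_1\oplus RV_2$.

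First I would decompose $x = x_1 + x_2$ along $RV = RV_1\oplus RV_2$ and observe that, in the disjoint union $\Gamma_1\oplus \Gamma_2$, the adjacency relation $v\sim w$ forces both $v,w\in V_1$ or both $v,w\in V_2$. Consequently the generating relations $x_v\std_w - x_w\std_v$ for $v\sim_\Gamma w$ partition into two families: those indexed by edges of $\Gamma_1$ (which lie in $RV_1$ and involve only the coordinates of $x_1$) and those indexed by edges of $\Gamma_2$ (which lie in $RV_2$ and involve only $x_2$). In particular, for $v,w\in V_i$ we have $x_v = (x_i)_v$ and $x_w = (x_i)_w$, so the relations restricted to $V_i$ agree with the defining relations of $\Adj(\Gamma_i)_{x_i}$.

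Second I would invoke the general fact that if $M = M_1\oplus M_2$ and $N_i\subset M_i$ are submodules, then $M/(N_1\oplus N_2) \approx M_1/N_1 \oplus M_2/N_2$. Applying this with $M_i = RV_i$ and $N_i$ equal to the submodule generated by the $\Gamma_i$-relations on $x_i$ yields the desired isomorphism $\Adj(\Gamma)_x \approx \Adj(\Gamma_1)_{x_1}\oplus\Adj(\Gamma_2)_{x_2}$ of $R$-modules.

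There is no real obstacle here; the entire content is the observation that the edge set of a disjoint union is a disjoint union of the edge sets, and the only bookkeeping is verifying that no relation mixes $RV_1$ and $RV_2$ components.
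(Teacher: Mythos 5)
Your proof is correct and is precisely the argument the paper leaves implicit (the lemma is stated with its proof omitted as immediate): the relation submodule splits as a direct sum along $RV = RV_1 \oplus RV_2$ because no edge of a disjoint union joins $V_1$ to $V_2$, and the quotient of a direct sum by a direct sum of submodules is the direct sum of the quotients. Nothing further is needed.
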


Let $K$ be a field.
For $x \in K V$, let $\supp(x) = \{v\in V : x_v\not= 0\}$.
Recall the definitions of $\Nbh_\Gamma[U]$ and $\compcnt_\Gamma(U)$ from
\S\ref{s:intro}.
The following is a key ingredient of our proof of Theorem~\ref{thm:formula}.

\begin{lemma}
  \label{lem:dim_Adjx}
  Let $x\in K V$
  and $U = \supp(x)$.
  Then
  \[
    \dim(\Adj(\Gamma)_x) = {\compcnt_\Gamma(U) + \card V -
      \card{\Nbh_\Gamma[U]}}.
  \]
\end{lemma}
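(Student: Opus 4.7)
My plan is to partition $V$ into three disjoint pieces and analyse the defining relations of $\Adj(\Gamma)_x$ piece by piece. Write $V = U \,\sqcup\, (\Nbh_\Gamma[U]\setminus U) \,\sqcup\, W$ where $W = V\setminus\Nbh_\Gamma[U]$. The defining relation attached to an edge $\{v,w\}$ of $\Gamma$ is $x_v\std_w - x_w\std_v$, and I would sort these relations by how many endpoints lie in $U$:
\begin{enumerate}[(i)]
\item If $v,w\notin U$, the relation is $0$ and imposes nothing.
\item If $v\in U$ and $w\notin U$, then $x_w=0$ and $x_v\neq 0$, so the relation forces $\std_w = 0$ in the quotient.
\item If $v,w\in U$, then both coefficients are nonzero and the relation identifies $\std_v$ and $\std_w$ up to a nonzero scalar.
\end{enumerate}

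From (ii), every $\std_w$ with $w\in\Nbh_\Gamma[U]\setminus U$ dies, and from (iii), within each connected component $C$ of $\Gamma[U]$ all $\std_v$ ($v\in C$) become scalar multiples of one another (propagating along any spanning tree of $C$); meanwhile $W$ contributes freely. This yields a candidate $K$-basis consisting of one representative $\std_{v_C}$ per connected component $C$ of $\Gamma[U]$, together with $\std_w$ for $w\in W$, giving the desired count $\compcnt_\Gamma(U) + \card V - \card{\Nbh_\Gamma[U]}$ for the dimension.

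To make this rigorous and rule out hidden extra relations, I would construct an explicit surjection and an explicit splitting. Define a $K$-linear map $\varphi\colon KV \to K^{\compcnt_\Gamma(U)}\oplus KW$ by sending $\std_v$ (for $v\in C$, a component of $\Gamma[U]$) to $x_v$ times the basis vector indexed by $C$, sending $\std_w$ for $w\in\Nbh_\Gamma[U]\setminus U$ to $0$, and sending $\std_w$ for $w\in W$ to its own basis vector. A one-line check confirms that $\varphi$ kills every generator $x_v\std_w - x_w\std_v$ (the three cases above), so $\varphi$ descends to $\Adj(\Gamma)_x$. Surjectivity of $\varphi$ is clear; for injectivity on the quotient, one verifies that the relations already imply that every $\std_v$ in the quotient is a scalar multiple of some $\std_{v_C}$ (if $v\in U$) or vanishes (if $v\in\Nbh_\Gamma[U]\setminus U$) or is free (if $v\in W$). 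This can be organised cleanly by using Lemma~\ref{lem:Adj_disjoint_union} to split off the components of $\Gamma[\Nbh_\Gamma[U]]$ from the isolated-from-$U$ part $W$, reducing to the case where $\Gamma$ is connected and $U\neq\emptyset$, in which case $W=\emptyset$ and $\Nbh_\Gamma[U]=V$.

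The only genuine point requiring care, and what I view as the main obstacle, is verifying that the proportionalities coming from (iii) do not accidentally force further identifications between different components of $\Gamma[U]$ (or back onto the $W$-part). This is precisely what the explicit splitting $\varphi$ establishes: since $\varphi$ sends representatives of distinct components to linearly independent basis vectors, no such spurious identification can exist.
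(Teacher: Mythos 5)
Your argument is correct and follows essentially the same route as the paper's proof: the same three-way sorting of the relations according to how many endpoints lie in $U$, the same reduction (via Lemma~\ref{lem:Adj_disjoint_union}) to connected components of $\Gamma[U]$ plus the free part $V\setminus\Nbh_\Gamma[U]$, and the same spanning-tree propagation for the upper bound. Your explicit splitting map $\varphi$ is just the per-component version of the paper's observation that the relation module is contained in $x^\perp$, so both proofs certify the lower bound by pairing against $x$.
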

\begin{proof}
  Let $H := \langle x_v\std_w -x_w\std_v : v\sim w\rangle \le K V$
  so that $\Adj(\Gamma)_x\approx K V/H$.
  \begin{enumerate}[(a)]
  \item
    \label{lem:dim_Adjx_a}
    Suppose that $\Gamma$ is connected and $U = V$.
    We need to show that $\dim(\Adj(\Gamma)_x) = 1$.
    To see that, first note that $H \subset x^\perp$, where the orthogonal
    complement is taken with respect to the bilinear form $y \cdot z =
    \sum_{v\in V} y_vz_v$.
    Hence, $\Adj(\Gamma)_x \not= 0$.
    Choose a spanning tree $\Tau$ of $\Gamma$ and a root $r\in V$.
    For $v\in V\setminus\{r\}$, let $\pred(v)$ be the predecessor of $v$ on the
    unique path from $r$ to $v$ in $\Tau$.
    As the elements $\std_v - \frac{x_{v}}{x_{\pred(v)} }\std_{\pred(v)}\in H$
    for $v\in V\setminus\{r\}$
    are linearly independent, $\dim(\Adj(\Gamma)_x) \le 1$.
    Thus, $\dim(\Adj(\Gamma)_x) = 1$.
  \item
    \label{lem:dim_Adjx_b}
    If $U = V$ but $\Gamma$ is possibly disconnected, then \ref{lem:dim_Adjx_a} and
    Lemma~\ref{lem:Adj_disjoint_union} show that $\dim(\Adj(\Gamma)_x) =
    \compcnt_\Gamma(U)$ is the number of connected components of $\Gamma$,
    as claimed.
  \item
    For the general case, let $x[U]:= \sum_{u\in U}x_u\std_u \in K U$ be the
    image of $x$ under the natural projection $K V = K U \oplus  K(V\setminus U) \to K U$.
    We claim that
    $\Adj(\Gamma)_x \approx
      \Adj(\Gamma[U])_{x[U]} \oplus K(V\setminus \Nbh_\Gamma[U])$.
    Indeed, this follows since $H$ is spanned by the following two types of
    elements:
    \begin{itemize}[$\triangleright$]
    \item $x_u \std_v - x_v \std_u$ for adjacent vertices $u,v\in U$.
    \item $\std_w$ for $w\in \Nbh_\Gamma[U]\setminus U$.
    \end{itemize}
    The claim follows since by \ref{lem:dim_Adjx_b},
    $\dim(\Adj(\Gamma[U])_{x[U]}) = \compcnt_\Gamma(U)$. \qedhere
  \end{enumerate}
\end{proof}

\begin{rem}
  Let $\Gamma$ have $n$ vertices and $c$ connected components.
  Lemma~\ref{lem:dim_Adjx} generalises a well-known
  basic fact: each oriented incidence matrix of $\Gamma$ has rank
  $\rank(\Gamma) = n - c$; see \cite[Prop.~4.3]{Big93}.
  This easily implies the special case $x = \sum_{v\in V} \std_v$ of
  Lemma~\ref{lem:dim_Adjx}.
\end{rem}

\section{Proof of Theorem~\ref{thm:formula}}
\label{s:proof_formula}

We first rephrase Theorem~\ref{thm:formula}.
For a finite group $G$, define a Dirichlet polynomial $\zeta^{\cc}_G(s) =
\sum_{e=1}^\infty \cc_e(G) e^{-s}$; here, $s$ denotes a complex variable.
For almost simple groups, these functions were studied in \cite{LS05}.
Following \cite{Lin19}, we refer to $\zeta^{\cc}_G(s)$ as the \emph{conjugacy
  class zeta function} of $G$.
We note that a different notion of conjugacy class zeta functions,
occasionally denoted using the same notation $\zeta^{\cc}_G(s)$, can also be
found in the literature; see \cite{dS05,BDOP13,ask,ask2}.
Following \cite{cico}, in \S\ref{ss:cc_zetas}, we will refer to the latter
functions as \itemph{class-counting zeta functions}.

Let $\Gamma = (V,E)$ be a graph with $n$ vertices and $m$ edges.
Theorem~\ref{thm:formula} is equivalent to $\zeta^\cc_{\GG_\Gamma(\FF_q)}(s)
= q^m \Biv_\Gamma(q,q^{-1-s})$.

\begin{lemma}
  \label{lem:cc_zeta_via_Adj}
  $\displaystyle \zeta^\cc_{\GG_\Gamma(\FF_q)}(s) = q^{m-n(s+1)}\sum_{x\in \FF_q V} \card{\Adj(\Gamma)_x}^{s+1}$.
\end{lemma}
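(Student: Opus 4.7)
The plan is to reduce the conjugacy class zeta function to a sum of centraliser cardinalities, compute those centralisers via the Lie bracket of $\fh_\Gamma(\FF_q)$, and match the resulting kernels with specialised adjacency modules.

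The first step rests on the standard identity
\[
\zeta^\cc_G(s) = |G|^{-s-1}\sum_{g\in G}|Z_G(g)|^{s+1},
\]
obtained by partitioning the sum over conjugacy classes $C$ and using $|C|\cdot|Z_G(g)| = |G|$ for $g\in C$. For $G = \GG_\Gamma(\FF_q) \approx \HH_\Gamma(\FF_q)$ (Proposition~\ref{prop:two_graphical_group_schemes}), which has order $q^{m+n}$, this yields the prefactor $q^{-(m+n)(s+1)}$ and replaces the sum over $G$ by a sum over the underlying set $\FF_q\Gamma = \FF_qV\oplus\FF_qE$.

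For the second step, Proposition~\ref{prop:centralisers}\ref{prop:centralisers1} lets me replace the group centraliser of $g = x + z \in \FF_qV\oplus\FF_qE$ in $\HH_\Gamma(\FF_q)$ by its Lie centraliser in $\fh_\Gamma(\FF_q)$. Since $\FF_qE$ is central (Proposition~\ref{prop:centralisers}\ref{prop:centralisers2}), this centraliser depends only on $x$ and decomposes as the direct sum of $\FF_qE$ with the kernel of the $\FF_q$-linear map
\[
\phi_x\colon \FF_qV \to \FF_qE,\qquad y \mapsto \sum_{(i,j)\in J}(x_iy_j - x_jy_i)\std_{ij}.
\]
Thus $|Z_G(g)| = q^{m+\dim\ker\phi_x}$ is independent of $z$, and summing first over $z\in\FF_qE$ contributes an additional factor of $q^m$.

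Finally, reading off matrices in the canonical bases shows that $\phi_x$ is the transpose of the presentation map $\psi_x\colon \FF_qE\to\FF_qV$, $\std_{ij}\mapsto x_i\std_j - x_j\std_i$, whose cokernel is by definition $\Adj(\Gamma)_x$. Hence $\dim\ker\phi_x = n - \rank(\psi_x) = \dim_{\FF_q}\Adj(\Gamma)_x$, giving $|Z_G(g)| = q^m\cdot|\Adj(\Gamma)_x|$. Substituting into the formula from the first step and simplifying the exponent
\[
-(m+n)(s+1) + m(s+2) \;=\; m - n(s+1)
\]
yields the claim. I anticipate no genuine obstacle; the only care needed lies in bookkeeping the powers of $q$ and in verifying the transpose relation between $\phi_x$ and $\psi_x$.
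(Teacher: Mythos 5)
Your proof is correct, but it follows a genuinely different route from the paper's. The paper derives the lemma from O'Brien--Voll's Theorem~B: it constructs the commutator matrix $A(X_1,\dotsc,X_{n'})$ of the Lie algebra $\fh\otimes\FF_p$, reads off $\cc_{q^i}(\GG_\Gamma(\FF_q))$ as a rank count for specialisations of that matrix, identifies $\Adj(\Gamma)_x$ with $\Coker(\check A(x))$, and then must argue separately for even $q$ (where the Lazard correspondence underlying Theorem~B is unavailable) by appealing to the \emph{proof} rather than the statement of that theorem. You instead start from the elementary identity $\zeta^\cc_G(s)=\card{G}^{-s-1}\sum_{g\in G}\card{Z_G(g)}^{s+1}$, use Proposition~\ref{prop:centralisers}\ref{prop:centralisers1} to replace group centralisers by Lie centralisers, observe that the centraliser of $x+z$ is $\ker\phi_x\oplus\FF_qE$, and identify $\dim\ker\phi_x$ with $\dim_{\FF_q}\Adj(\Gamma)_x$ via the transpose relation between $\phi_x$ and the presentation map $\psi_x$ (valid over a field since row rank equals column rank). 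All of these ingredients are established in the paper for arbitrary rings, so your argument is uniform in $q$ and sidesteps the characteristic-$2$ caveat entirely; what you lose relative to the paper's route is only the explicit intermediate formula for each $\cc_{q^i}(\GG_\Gamma(\FF_q))$ in terms of matrix rank counts, which the paper does not need beyond this lemma anyway. The bookkeeping of powers of $q$ in your final step checks out: $-(m+n)(s+1)+m(s+2)=m-n(s+1)$.
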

\begin{proof}
  Write $V = \{v_1,\dotsc,v_n\}$ and $J = \{ (j,k) : 1 \le j < k\le
  n \text{ with } v_j \sim v_k\}$;
  for a ring $R$, we identify $R V = R^n$.
  We assume that $v_{n'+1},\dotsc,v_n$ are the isolated vertices of $\Gamma$.
  Order the elements of $J$ lexicographically to establish a bijection
  between $\{1,\dotsc,m\}$ and~$J$.

  Write $\fh = \fh_\Gamma(\ZZ)$; see \S\ref{ss:centralisers}.
  Let $\fh'$ and $\fz$ denote the derived subalgebra and centre of~$\fh$,
  respectively.
  By Proposition~\ref{prop:centralisers},
  $\fh'$ and $\fz$ are free $\ZZ$-modules of ranks $m$ and $m + n -n'$,
  respectively.
  Moreover, the images of $\std_1,\dotsc,\std_{n'}$ form a $\ZZ$-basis of $\fh/\fz$.
  Proposition~\ref{prop:centralisers} also shows that for each ring $R$, we
  may identify $\fh' \otimes R$ with the derived subalgebra of $\fh\otimes R$,
  and $\fz\otimes R$ with the centre of $\fh\otimes R$.
  
  Suppose that $q = p^f$ for an \itemph{odd} prime $p$.
  As we noted in \S\ref{ss:centralisers},
  $\GG_\Gamma(\FF_q)$ is isomorphic to the group $\exp(\fh_\Gamma(\FF_q))$
  attached to the Lie $\FF_q$-algebra $\fh_\Gamma(\FF_q) = \fh\otimes \FF_q$ via the Lazard 
  correspondence.
  Let $A(X_1,\dotsc,X_{n'}) \in \Mat_{n'\times m}(\ZZ[X_1,\dotsc,X_{n'}])$ be
  the matrix of linear forms whose $(j,k)$th column has precisely two non-zero entries,
  namely $X_k$ and $-X_j$ in rows $j$ and $k$, respectively.
  Let $\ZZ_p$ denote the ring of $p$-adic integers.
  It is readily verified that the image of the matrix 
  $A(X_1,\dotsc,X_{n'})$ over $\ZZ_p[X_1,\dotsc,X_{n'}]$
  is a ``commutator matrix'' (as defined in \cite[Def.~2.1]{O'BV15})
  associated with the finite Lie
  $\ZZ_p$-algebra $\fh \otimes \FF_p$.

  By \cite[Thm~B]{O'BV15} 
  \[
    \cc_{q^i}(\GG_\Gamma(\FF_q)) = \#\{x\in \FF_q^{n'} : \rank_{\FF_q}(A(x)) =
    i)\}
    \cdot q^{n - n' + m - i}.
  \]
  Let $\check A(X)$ be the $m\times n$ matrix over $\ZZ[X] =
  \ZZ[X_1,\dotsc,X_n]$ which is obtained from $A(X_1,\dotsc,X_{n'})^\top$ by
  adding zero columns in positions $n'+1,\dotsc,n$.
  Hence,
  \[
    \cc_{q^i}(\GG_\Gamma(\FF_q)) = \#\{x\in \FF_q^{n} : \rank_{\FF_q}(\check A(x)) =
    i)\}
    \cdot q^{m - i}.
  \]

  By construction,
  $\Adj(\Gamma)_x \approx \Coker(\check A(x))$ for all $x\in \FF_q V =
  \FF_q^n$.
  In particular, for $x\in \FF_q^n$, we have
  $\rank_{\FF_q}(\check A(x)) = i$ if and only if
  $\dim_{\FF_q}(\Adj(\Gamma)_x) = {n-i}$.
  Hence, writing $\alpha_x = \card{\Adj(\Gamma)_x}$,
  we have $\rank_{\FF_q}(\check A(x)) = i$ if and only if
  $q^n\alpha_x^{-1} = q^i$.
  Thus,
  \begin{align*}
    \zeta^{\cc}_{\GG_\Gamma(\FF_q)}(s)
    & = \sum_{i=0}^\infty \cc_{q^i}(\GG_\Gamma(\FF_q)) q^{-is}
    = \sum_{x\in \FF_q V} q^{m-n}\alpha_x \cdot (q^n\alpha_x^{-1})^{-s}\\&
    = q^{m-n(s+1)}
    \sum_{x\in\FF_qV} \alpha_x^{s+1}.
  \end{align*}

  Finally, if $q$ is even, while the statement of \cite[Thm~B]{O'BV15} itself
  is no longer directly applicable (due to its reliance on the Lazard
  correspondence), its proof in \cite[\S\S 3.1, 3.3--3.4]{O'BV15}
  \itemph{does} apply in the present setting, completing the present proof.
  Indeed, the key ingredient that we need is to be able to identify
  $\GG_\Gamma(\FF_q)$ and $\fh \otimes \FF_q$ as sets such that
  two elements commute in the group if and only if they commute in
  the Lie algebra.
  These conditions are satisfied by
  Propositions~\ref{prop:two_graphical_group_schemes}--\ref{prop:centralisers}.
\end{proof}

\begin{proof}[Proof of Theorem~\ref{thm:formula}]
  By combining Lemma~\ref{lem:cc_zeta_via_Adj}
  and Lemma~\ref{lem:dim_Adjx}, we obtain
  \begin{align*}
    \zeta^\cc_{\GG_\Gamma(\FF_q)}(s)
    & =
      q^{m-n(s+1)} \sum_{x\in \FF_q V} \left(q^{\compcnt_\Gamma(\supp(x)) + n -\card{\Nbh_\Gamma[\supp(x)]}}\right)^{s+1}
      \\
    & =
      q^m \sum_{x\in \FF_q V} (q^{-1-s})^{\card{\Nbh_\Gamma[\supp(x)]} -
      \compcnt_\Gamma(\supp(x))} \\
    & = q^m \sum_{U\subset V} (q-1)^{\card U}(q^{-1-s})^{\card{\Nbh_\Gamma[U]} -
      \compcnt_\Gamma(U)}
    \\
    & =
      q^m \Biv_\Gamma(q,q^{-1-s}).
    \qedhere
  \end{align*}
\end{proof}

\section{Graph operations: disjoint unions and joins}
\label{s:operations}

Let $\Gamma_1 = (V_1,E_1)$ and $\Gamma_2 = (V_2,E_2)$ be graphs with $V_1\cap
V_2 = \emptyset$.
Let $\Gamma_i$ have $n_i$ vertices and $m_i$ edges.
The disjoint union $\Gamma_1\oplus \Gamma_2$ and join $\Gamma_1\join \Gamma_2$
(see \S\ref{ss:intro/known}) of $\Gamma_1$ and $\Gamma_2$ are both graphs on
the vertex set $V_1\cup V_2$ with $m_1+m_2$ and $m_1+ m_2 + n_1n_2$ edges,
respectively.

\begin{prop}
  \label{prop:disjoint_union}
  $\Biv_{\Gamma_1\oplus \Gamma_2}(X,Y) = \Biv_{\Gamma_1}(X,Y)
  \Biv_{\Gamma_2}(X,Y)$.
\end{prop}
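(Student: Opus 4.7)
The plan is to exploit the fact that in a disjoint union, subsets of the combined vertex set split canonically into their intersections with the two vertex sets, and each of the three quantities appearing in the definition of $\Biv_\Gamma(X,Y)$ is additive under this splitting.

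Concretely, any subset $U \subset V_1 \cup V_2$ decomposes uniquely as $U = U_1 \sqcup U_2$ with $U_i = U \cap V_i \subset V_i$, giving a bijection between subsets of $V_1 \cup V_2$ and pairs $(U_1,U_2) \in 2^{V_1} \times 2^{V_2}$. Clearly $\card U = \card{U_1} + \card{U_2}$. Since $\Gamma_1 \oplus \Gamma_2$ has no edges between $V_1$ and $V_2$, the closed neighbourhood is
\[
\Nbh_{\Gamma_1 \oplus \Gamma_2}[U] = \Nbh_{\Gamma_1}[U_1] \sqcup \Nbh_{\Gamma_2}[U_2],
\]
a disjoint union living inside $V_1 \sqcup V_2$, so $\card{\Nbh_{\Gamma_1\oplus \Gamma_2}[U]} = \card{\Nbh_{\Gamma_1}[U_1]} + \card{\Nbh_{\Gamma_2}[U_2]}$. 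Likewise, the induced subgraph $(\Gamma_1\oplus \Gamma_2)[U]$ is the disjoint union of $\Gamma_1[U_1]$ and $\Gamma_2[U_2]$, so its connected components are precisely those of the two pieces, giving $\compcnt_{\Gamma_1\oplus \Gamma_2}(U) = \compcnt_{\Gamma_1}(U_1) + \compcnt_{\Gamma_2}(U_2)$.

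Substituting these three additivity relations into the defining sum \eqref{eq:Biv} and splitting the sum over $U$ as a double sum over $(U_1,U_2)$, the summand factors as a product of the $(U_1)$-summand for $\Biv_{\Gamma_1}$ and the $(U_2)$-summand for $\Biv_{\Gamma_2}$, and the double sum becomes a product of the two single sums. This yields the claimed identity. There is no substantive obstacle here: the only thing to check carefully is that no edges cross between $V_1$ and $V_2$, which is precisely the defining property of the disjoint union, and that handles both the neighbourhood and component additivity.
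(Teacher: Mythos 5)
Your proof is correct and is essentially the paper's own argument: the paper likewise notes that $\Nbh_{\Gamma_1\oplus\Gamma_2}[U_1\cup U_2] = \Nbh_{\Gamma_1}[U_1]\cup\Nbh_{\Gamma_2}[U_2]$ and $\compcnt_{\Gamma_1\oplus\Gamma_2}(U_1\cup U_2) = \compcnt_{\Gamma_1}(U_1)+\compcnt_{\Gamma_2}(U_2)$ and lets the sum in \eqref{eq:Biv} factor accordingly. You have merely spelled out the same splitting of subsets and the resulting factorisation in more detail.
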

\begin{proof}
  This follows since if $U_i\subset V_i$ for $i=1,2$,
  then $\Nbh_{\Gamma_1\oplus\Gamma_2}[U_1\cup U_2] = \Nbh_{\Gamma_1}[U_1] \cup
  \Nbh_{\Gamma_2}[U_2]$
  and $\compcnt_{\Gamma_1\oplus \Gamma_2}(U_1\cup U_2) =
  \compcnt_{\Gamma_1}(U_1)
  + \compcnt_{\Gamma_2}(U_2)$.
\end{proof}

Proposition~\ref{prop:disjoint_union} also follows, {a fortiori}, from
Theorem~\ref{thm:formula} and the identity $\cc_{e}(G_1\times G_2) =
\sum\limits_{\divides d e} \cc_{d}(G_1)\cc_{e/d}(G_2)$ for finite groups $G_1$
and $G_2$.

\begin{prop}
  \label{prop:joins}
  {\small
    \[
      \Biv_{\Gamma_1\join \Gamma_2}(X,Y)
      =
      1 +
      \Bigl(\Biv_{\Gamma_1}(X,Y)-1\Bigr) Y^{n_2} +
      Y^{n_1} \Bigl(\Biv_{\Gamma_2}(X,Y)-1\Bigr)
      + (X^{n_1}-1)(X^{n_2}-1)Y^{n_1+n_2-1}.
    \]
  }
\end{prop}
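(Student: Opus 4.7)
The plan is to evaluate the defining sum
\[
  \Biv_{\Gamma_1 \join \Gamma_2}(X,Y) = \sum_{U \subset V_1 \cup V_2} (X-1)^{\card{U}}\, Y^{\card{\Nbh_{\Gamma_1 \join \Gamma_2}[U]} - \compcnt_{\Gamma_1 \join \Gamma_2}(U)}
\]
by writing each $U$ uniquely as $U_1 \cup U_2$ with $U_i \subset V_i$ and splitting the sum into four cases according to which of $U_1, U_2$ are empty. The key observation driving all four cases is that in $\Gamma_1 \join \Gamma_2$ every vertex of $V_1$ is adjacent to every vertex of $V_2$, which controls both the closed neighbourhood of $U$ and the component count of $(\Gamma_1 \join \Gamma_2)[U]$.

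The four contributions come out as follows. First, $U = \emptyset$ contributes $1$. Second, when $U_2 = \emptyset$ and $U_1 \neq \emptyset$, the induced subgraph on $U_1$ is the same as in $\Gamma_1$, so $\compcnt_{\Gamma_1 \join \Gamma_2}(U_1) = \compcnt_{\Gamma_1}(U_1)$; on the other hand, any vertex of $U_1$ is adjacent in the join to every element of $V_2$, hence $\Nbh_{\Gamma_1 \join \Gamma_2}[U_1] = \Nbh_{\Gamma_1}[U_1] \cup V_2$ and the exponent of $Y$ grows by $n_2$. Summing over non-empty $U_1 \subset V_1$ yields the term $Y^{n_2}(\Biv_{\Gamma_1}(X,Y)-1)$, and the symmetric case gives $Y^{n_1}(\Biv_{\Gamma_2}(X,Y)-1)$.

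The only case that requires a little thought is $U_1, U_2$ both non-empty. Here the complete bipartite structure between $V_1$ and $V_2$ in the join forces $\Nbh_{\Gamma_1 \join \Gamma_2}[U] = V_1 \cup V_2$, and it also makes $(\Gamma_1 \join \Gamma_2)[U]$ connected, since any two vertices of $U$ can be joined either directly or via any vertex in the other part of $U$. Thus the exponent of $Y$ is the constant $n_1 + n_2 - 1$, and the sum factorises as
\[
  \Bigl(\sum_{\emptyset \neq U_1 \subset V_1} (X-1)^{\card{U_1}}\Bigr)\Bigl(\sum_{\emptyset \neq U_2 \subset V_2} (X-1)^{\card{U_2}}\Bigr) Y^{n_1+n_2-1}.
\]
Applying the binomial identity $\sum_{U_i \subset V_i}(X-1)^{\card{U_i}} = X^{n_i}$, the two inner sums become $X^{n_1}-1$ and $X^{n_2}-1$, giving the last term of the claimed formula. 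Adding the four contributions yields the proposition. There is no real obstacle here; the only thing to check carefully is the connectedness claim for $(\Gamma_1 \join \Gamma_2)[U]$ when both $U_1$ and $U_2$ are non-empty.
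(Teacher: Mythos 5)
Your proposal is correct and follows essentially the same route as the paper: decompose $U$ as $U_1\cup U_2$, split into the same four cases according to which $U_i$ are empty, use the complete bipartite structure of the join to compute $\Nbh_{\Gamma_1\join\Gamma_2}[U]$ and $\compcnt_{\Gamma_1\join\Gamma_2}(U)$ in each case, and evaluate the last sum via $\sum_{U_i\subset V_i}(X-1)^{\card{U_i}}=X^{n_i}$. The connectedness observation in the fourth case, which you rightly flag as the only point needing care, is exactly the step the paper records as $\compcnt_\Gamma(U)=1$.
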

\begin{proof}
  Write $\Gamma = \Gamma_1\join \Gamma_2$ and $V = V_1\cup V_2$.
  Let $U_i \subset V_i$ for $i=1,2$ and $U = U_1\cup U_2$.
  We seek to relate the summand $t(U) := (X-1)^{\card U}
  Y^{\card{\Nbh_\Gamma[U]}-\compcnt_\Gamma(U)}$
  in the definition of $\Biv_\Gamma(X,Y)$
  to the summands $t_i(U_i) := (X-1)^{\card {U_i}}
  Y^{\card{\Nbh_{\Gamma_i}[U_i]}-\compcnt_{\Gamma_i}(U_i)}$.
  We consider four cases:
  \begin{enumerate}[1.]
  \item
    If $U_1 = U_2 = U = \emptyset$, then $t(U) = 1$.
  \item
    If $U_1 \not= \emptyset = U_2$,
    then $\Nbh_\Gamma[U] = \Nbh_{\Gamma_1}[U_1] \cup V_2$,
    $\compcnt_\Gamma(U) = \compcnt_{\Gamma_1}(U_1)$,
    and $t(U) = t_1(U_1) Y^{n_2}$.
  \item
    Analogously, if $U_1 = \emptyset \not= U_2$,
    then $t(U) = Y^{n_1} t_2(U_2)$.
  \item
    If $U_1\not= \emptyset \not= U_2$,
    then $\Nbh_{\Gamma}[U] = V$, $\compcnt_\Gamma(U) = 1$, and
    $t(U) = (X-1)^{\card{U_1} + \card{U_2}} Y^{n_1+n_2-1}$.
  \end{enumerate}
  We conclude that
  \begin{align*}
    \Biv_\Gamma(X,Y)
    & =
      1 +
      \Bigl(\Biv_{\Gamma_1}(X,Y)-1\Bigr) Y^{n_2} +
      Y^{n_1} \Bigl(\Biv_{\Gamma_2}(X,Y)-1\Bigr) \\
    & \qquad\qquad +
      {\left(\sum_{\substack{\emptyset \not= U_1\subset V_1\\\emptyset \not=
    U_2\subset V_2}}
      (X-1)^{\card{U_1}} (X-1)^{\card{U_2}}\right)}
    Y^{n_1 + n_2-1}.
    \qedhere
  \end{align*}
\end{proof}

As we will explain in \S\ref{ss:zeta_join}, Proposition~\ref{prop:joins}
is closely related to \cite[Prop.~8.4]{cico}.

\begin{ex}[Complete bipartite graphs]
  \label{ex:bipartite}
  Let $\CG_{a,b} = \DG_a \join \DG_b$ be a complete bipartite graph.
  Recall from Example~\ref{ex:CG_DG} that $\Biv_{\DG_n}(X,Y) = X^n$.
  Therefore, by Proposition~\ref{prop:joins}, $\Biv_{\CG_{a,b}}(X,Y) =
  1 + (X^a-1)Y^b + Y^a(X^b-1) + (X^a-1)(X^b-1)Y^{a+b-1}$.
  Hence,
  \begin{align*}
    \CSP_{\CG_{a,b}}(X,Y) & = X^{(a-1)(b-1)}(X^a-1)(X^b-1)Y^{a+b-1} \\
                          & \phantom= +X^{(a-1)b}(X^a-1)Y^b + X^{a(b-1)}(X^b-1)Y^a +X^{ab}
  \end{align*}
  and
  $f_{\CG_{a,b}}(X) = X^{(a-1)(b-1)}((X^a-1)(X^b-1) + X^{a-1}(X^a-1) +
      X^{b-1}(X^b-1)+X^{a+b-1})$.
  We note that the graphical group $\GG_{\CG_{a,b}}(\ZZ/N\ZZ)$
  is the maximal quotient of class at most~$2$ of the free product
  $(\ZZ/N\ZZ)^a * (\ZZ/N\ZZ)^b$; see \cite[\S 3.4]{cico}.
\end{ex}

\begin{ex}[Stars]
  \label{ex:star}
  As a special case of Example~\ref{ex:bipartite},
  let $\Star_n = \DG_{n} \join\, \Point = \CG_{n,1}$ be a star graph on $n+1$ vertices.
  Then
  $\Biv_{\Star_n}(X,Y) =
  (X^{n+1}-X^n)Y^n + (X^n-1)Y + 1$.
  Hence, $\CSP_{\Star_n}(X,Y) =
  X^{n-1} \cdot \bigl((X^2-X) Y^n + (X^n-1)Y + X\bigr)$
  and $f_{\Star_n}(X) =X^{n-1}(X^n+X^2-1)$.
  
\end{ex}

We record the following consequence of Proposition~\ref{prop:joins} for later use.
\begin{cor}
  \label{cor:univ_join}
  Let $\Gamma_1$ and $\Gamma_2$ be graphs.
  Let $\Gamma_i$ have  $m_i$ edges and $n_i$ vertices.
  Then
  \begin{align}
    f_{\Gamma_1\join\Gamma_2}(X)
        =
      X^{m_1 + m_2 + n_1 n_2}
          & + X^{m_2 + (n_1-1)n_2} (f_{\Gamma_1}(X)-X^{m_1}) \nonumber \\
          & + X^{m_1 + n_1(n_2-1)} (f_{\Gamma_2}(X)-X^{m_2})\nonumber \\
        & \phantom{X^{m_1 + m_2}}
          + X^{m_1 + m_2 +
          (n_1-1)(n_2-1)}(X^{n_1}-1)(X^{n_2}-1).
          \label{eq:univ_join}
    \\ && \tag*{\qedsymbol}
  \end{align}
\end{cor}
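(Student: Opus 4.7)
The plan is to derive the formula by a direct substitution, combining Proposition~\ref{prop:joins} with Corollary~\ref{cor:univ_formula}. The identity $\CSP_\Gamma(X,Y) = X^m \Biv_\Gamma(X, X^{-1}Y)$ from~\eqref{eq:csp} together with Corollary~\ref{cor:univ_formula} yields the pivotal bridge
\[
f_\Gamma(X) = \CSP_\Gamma(X,1) = X^m \Biv_\Gamma(X, X^{-1}),
\]
which converts bivariate statements about $\Biv_\Gamma$ into univariate statements about $f_\Gamma$.

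Writing $\Gamma = \Gamma_1 \join \Gamma_2$ and noting that $\Gamma$ has $m_1 + m_2 + n_1 n_2$ edges, I would set $Y = X^{-1}$ in the formula from Proposition~\ref{prop:joins} and then multiply the resulting expression by $X^{m_1 + m_2 + n_1 n_2}$. Since $\Biv_{\Gamma_i}(X, X^{-1}) = X^{-m_i} f_{\Gamma_i}(X)$ for $i = 1, 2$, the four summands in Proposition~\ref{prop:joins} correspond respectively to the four summands in~\eqref{eq:univ_join}. Tracking the exponents: the second term picks up $X^{m_1 + m_2 + n_1 n_2 - n_2} = X^{m_1 + m_2 + (n_1-1)n_2}$, and after the factor $X^{-m_1}$ from $\Biv_{\Gamma_1}(X, X^{-1})$ this gives $X^{m_2 + (n_1-1)n_2}(f_{\Gamma_1}(X) - X^{m_1})$, and symmetrically for the third term; the last term uses $n_1 n_2 - n_1 - n_2 + 1 = (n_1-1)(n_2-1)$.

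There is essentially no obstacle here: the argument is a bookkeeping exercise in exponent arithmetic once the right change of variables $Y \mapsto X^{-1}$ is applied. The only point requiring mild care is to verify that $\Biv_\Gamma(X, X^{-1})$ is a well-defined element of $\ZZ[X, X^{-1}]$ and that the multiplication by $X^m$ cleans all negative powers; this is guaranteed because $\deg_Y \Biv_\Gamma(X,Y) = \rank(\Gamma) \le m$, as already recorded in the excerpt. The entire proof thus reduces to one substitution and a routine simplification, and no essentially new idea is required beyond Proposition~\ref{prop:joins}.
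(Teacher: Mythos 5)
Your derivation is correct and is exactly the route the paper intends: the corollary is stated without explicit proof as an immediate consequence of Proposition~\ref{prop:joins} via the substitution $f_\Gamma(X)=\CSP_\Gamma(X,1)=X^m\Biv_\Gamma(X,X^{-1})$ with $m=m_1+m_2+n_1n_2$, and your exponent bookkeeping matches term by term. Nothing is missing.
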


Beyond disjoint unions and joins, it would be natural to study the effects of
other graph operations on the polynomials $\Biv_\Gamma(X,Y)$.

\section[The constant and leading term]{The constant and leading term of $\Biv_\Gamma(X,Y)$}
\label{s:const_lt}

Let $\Gamma = (V,E)$ be a graph with $n$ vertices and $m$ edges.
In this section, we primarily view $\Biv_\Gamma(X,Y)$ as a polynomial in $Y$
over $\ZZ[X]$.
Its constant term is easily determined.

\begin{prop}
  \label{prop:isolated}
  $\Biv_\Gamma(X,0) = X^i$, where $i$ is the number of isolated vertices of $\Gamma$.
\end{prop}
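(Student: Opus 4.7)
The plan is to set $Y=0$ in the defining sum \eqref{eq:Biv} and identify precisely which subsets $U\subset V$ contribute. A summand indexed by $U$ survives iff $\card{\Nbh_\Gamma[U]} - \compcnt_\Gamma(U) = 0$, so the task reduces to describing those $U$ for which $\card{\Nbh_\Gamma[U]} = \compcnt_\Gamma(U)$.

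First I would record the two-sided estimate
\[
\compcnt_\Gamma(U) \;\le\; \card U \;\le\; \card{\Nbh_\Gamma[U]}.
\]
The left inequality holds because $\Gamma[U]$ has $\card U$ vertices distributed among $\compcnt_\Gamma(U)$ components, and the right inequality holds because $U\subset \Nbh_\Gamma[U]$ by definition. Consequently $\card{\Nbh_\Gamma[U]} - \compcnt_\Gamma(U) = 0$ forces equality throughout.

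Next I would show that equality throughout is equivalent to $U$ consisting solely of isolated vertices of $\Gamma$. Indeed, $\compcnt_\Gamma(U) = \card U$ means $\Gamma[U]$ has no edges (every vertex of $U$ is its own component), while $\card U = \card{\Nbh_\Gamma[U]}$ means $\Nbh_\Gamma[U] = U$, i.e.\ no vertex outside $U$ is adjacent to any vertex of $U$. Combining these, each $v\in U$ has no neighbour in $V$ at all, so $v$ is isolated; conversely, any subset of isolated vertices trivially satisfies both conditions. The case $U = \emptyset$ fits in (both quantities equal $0$).

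Finally I would sum over such $U$. Letting $I\subset V$ denote the set of isolated vertices, the surviving contribution is
\[
\Biv_\Gamma(X,0) \;=\; \sum_{U\subset I}(X-1)^{\card U} \;=\; \sum_{k=0}^{i}\binom{i}{k}(X-1)^k \;=\; X^i
\]
by the binomial theorem, where $i = \card I$. There is no real obstacle here; the only point requiring care is making the ``equality throughout'' characterisation precise, but the sandwich of inequalities makes that immediate.
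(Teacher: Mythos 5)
Your proof is correct and rests on exactly the same key observation as the paper's, namely the sandwich $\compcnt_\Gamma(U) \le \card U \le \card{\Nbh_\Gamma[U]}$ and the resulting characterisation of the surviving subsets as sets of isolated vertices. The only (cosmetic) difference is that you sum over all subsets of the isolated vertices and invoke the binomial theorem, whereas the paper first factors the isolated vertices out via the disjoint-union multiplicativity of $\Biv_\Gamma$ and reduces to the case $i=0$; both routes are equally valid.
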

\begin{proof}
  As $\Biv_\Point(X,Y) = X$,
  by Proposition~\ref{prop:disjoint_union},
  $\Biv_{\Gamma\oplus\Point}(X,Y) = X\cdot\Biv_\Gamma(X,Y)$.
  We may thus assume that $i = 0$.
  Let $U\subset V$.
  As $\compcnt_\Gamma(U) \le \card U \le \card{\Nbh_\Gamma[U]}$,
  we see that
  $\card{\Nbh_\Gamma[U]} = \compcnt_\Gamma(U)$ if and only if $U$
  consists of isolated vertices.
  This only happens for $U = \emptyset$ whence $\Biv_\Gamma(X,0) = 1$.
\end{proof}

For a group-theoretic interpretation of Proposition~\ref{prop:isolated},
note that $q^i$ is the order of the quotient
$\operatorname{Z}(\GG_\Gamma(\FF_q))/[\GG_\Gamma(\FF_q),\GG_\Gamma(\FF_q)]$.

Recall that $\rank(\Gamma) = n -c$, where $c$ is the number of connected
components of $\Gamma$.

\begin{prop}
  \label{prop:deg_Y}
  $\deg_Y(\Biv_\Gamma(X,Y)) = \rank(\Gamma)$.
\end{prop}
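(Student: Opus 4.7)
The plan is to show that the $Y$-exponent $|\Nbh_\Gamma[U]| - \compcnt_\Gamma(U)$ appearing in \eqref{eq:Biv} is bounded above by $\rank(\Gamma) = n-c$ for every $U \subset V$, that this bound is attained at $U = V$, and that the attained maximum survives after we collect terms of fixed $Y$-degree as a polynomial in $X$.

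For the upper bound, set $W = \Nbh_\Gamma[U]$. Every vertex $v \in W \setminus U$ is adjacent in $\Gamma$ to some $u \in U$, and this edge lies in $\Gamma[W]$; hence $v$ and $u$ lie in the same component of $\Gamma[W]$. Consequently, each component of $\Gamma[W]$ contains at least one vertex of $U$, and (since $\Gamma[U] \subset \Gamma[W]$) in fact contains a whole component of $\Gamma[U]$. This yields $\compcnt_\Gamma(W) \le \compcnt_\Gamma(U)$. Moreover, a spanning forest of $\Gamma[W]$ has $|W| - \compcnt_\Gamma(W)$ edges and is, \emph{a fortiori}, a forest in $\Gamma$; since the maximum size of a forest in $\Gamma$ is $n-c$, we obtain
\[
|\Nbh_\Gamma[U]| - \compcnt_\Gamma(U) \;\le\; |W| - \compcnt_\Gamma(W) \;\le\; n - c.
\]
Attainment at $U = V$ is immediate: then $W = V$ and $\compcnt_\Gamma(V) = c$.

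It remains to see that this $Y$-degree actually survives in $\Biv_\Gamma(X,Y)$. I view the coefficient of $Y^{n-c}$ as a polynomial in $X$, namely $\sum_{U} (X-1)^{|U|}$ summed over those $U \subset V$ with $|\Nbh_\Gamma[U]| - \compcnt_\Gamma(U) = n-c$. The subset $U = V$ is one such summand, contributing $(X-1)^n$, whose leading term in $X$ is $X^n$. Any other $U$ in the sum satisfies $|U| < n$, so contributes a polynomial in $X$ of degree strictly less than $n$. Hence the coefficient of $Y^{n-c}$ has leading term $X^n$ and is a nonzero element of $\ZZ[X]$, giving $\deg_Y \Biv_\Gamma(X,Y) = n-c$.

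There is no single substantial obstacle here; the one nontrivial ingredient is the inequality $\compcnt_\Gamma(\Nbh_\Gamma[U]) \le \compcnt_\Gamma(U)$, after which the bound on $\rank(\Gamma[W])$ and the non-cancellation argument are both automatic.
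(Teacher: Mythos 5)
Your proof is correct and follows essentially the same route as the paper: the key inequality $\compcnt_\Gamma(\Nbh_\Gamma[U])\le\compcnt_\Gamma(U)$, the bound $\rank(\Gamma[\Nbh_\Gamma[U]])\le\rank(\Gamma)$ (which you justify via spanning forests, the paper via monotonicity of matroid rank under subgraphs), and the observation that the $U=V$ summand contributes an uncancellable $X^nY^{\rank(\Gamma)}$ because every proper subset contributes lower $X$-degree. No gaps.
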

\begin{proof}
  If $\Gamma'$ is any subgraph of $\Gamma$, then $\rank(\Gamma') \le
  \rank(\Gamma)$.
  Let $U\subset V$ and write $\bar U = \Nbh_\Gamma[U]$.
  Since every vertex in $\bar U \setminus U$ is adjacent to some vertex in $U$,
  we have $\compcnt_\Gamma(\bar U) \le \compcnt_\Gamma(U)$.
  Hence, $\card{\bar U} - \compcnt_\Gamma(U) \le \card{\bar{U}} -
  \compcnt_\Gamma(\bar U) = \rank(\Gamma[\bar U]) \le \rank(\Gamma)$.
  Thus, $\deg_Y(\Biv_\Gamma(X,Y)) \le \rank(\Gamma)$.
  The summand corresponding to $U = V$ in \eqref{eq:Biv}
  contributes a term $X^n Y^{\rank(\Gamma)}$ to $\Biv_\Gamma(X,Y)$,
  and this term cannot be cancelled by a summand arising from any proper
  subset.
\end{proof}

For $h(X,Y) = \sum_{ij} a_{ij} X^iY^j\in \ZZ[X,Y]$ with $a_{ij}\in \ZZ$,
write $h(X,Y)\Bigl[Y^j\Bigr] = \sum_{i} a_{ij}X^i$
for the coefficient of $Y^j$ in $h(X,Y)$, regarded as a polynomial in $Y$.
We now consider the leading coefficient
$\Biv_\Gamma(X,Y)\Bigl[Y^{\rank(\Gamma)}\Bigr]$ of $\Biv_\Gamma(X,Y)$ as a
polynomial in~$Y$.
Recall that a \emph{dominating set} of $\Gamma$ is a set $D\subset V$ with
$\Nbh_\Gamma[D] = V$.
If, in addition, $\Gamma[D]$ is connected, then $D$ is a \emph{connected
dominating set}.
Let $\ConnDoms(\Gamma)$ be the set of connected dominating sets of $\Gamma$.
Clearly, $\ConnDoms(\Gamma) \not= \emptyset$ if and only if $\Gamma$ is
connected.

\begin{prop}
  \label{prop:dom}
  Suppose that $n\ge 2$.
  Then
  \begin{equation}
    \label{eq:dom}
    \Biv_\Gamma(X+1,Y)\Bigl[Y^{n-1} \Bigr] = \sum\limits_{D \in
      \ConnDoms(\Gamma)} X^{\card D}.
  \end{equation}
\end{prop}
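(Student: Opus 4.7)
The plan is to directly expand the definition of $\Biv_\Gamma(X+1,Y)$ and match the coefficient of $Y^{n-1}$ with the connected-dominating-set sum, term by term. Substituting $X \mapsto X+1$ into \eqref{eq:Biv} yields
\[
  \Biv_\Gamma(X+1,Y) = \sum_{U \subset V} X^{\card U}\, Y^{\card{\Nbh_\Gamma[U]} - \compcnt_\Gamma(U)},
\]
so the coefficient of $Y^{n-1}$ on the left-hand side of \eqref{eq:dom} is $\sum X^{\card U}$, summed over those $U \subset V$ satisfying $\card{\Nbh_\Gamma[U]} - \compcnt_\Gamma(U) = n-1$. The whole proof reduces to showing that this numerical condition on $U$ is equivalent to $U \in \ConnDoms(\Gamma)$.

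The ``if'' direction is immediate: a connected dominating set $D$ has $\Nbh_\Gamma[D] = V$ and $\compcnt_\Gamma(D) = 1$, giving the required value $n - 1$. For the ``only if'' direction, I would argue as in the proof of Proposition~\ref{prop:deg_Y}: for any nonempty $U$ one has $\card{\Nbh_\Gamma[U]} \le n$ and $\compcnt_\Gamma(U) \ge 1$, so $\card{\Nbh_\Gamma[U]} - \compcnt_\Gamma(U) \le n - 1$ with equality forcing simultaneously $\Nbh_\Gamma[U] = V$ (so $U$ dominates) and $\compcnt_\Gamma(U) = 1$ (so $\Gamma[U]$ is connected). The empty set contributes $0$, which equals $n - 1$ only if $n = 1$; this is precisely why the hypothesis $n \ge 2$ appears in the statement.

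There is no real obstacle here: the argument is a direct equivalence check on $U$, using only the elementary bounds on $\card{\Nbh_\Gamma[U]}$ and $\compcnt_\Gamma(U)$ that already appeared in Proposition~\ref{prop:deg_Y}. The only point to handle carefully is the edge case $U = \emptyset$, which the $n \ge 2$ hypothesis dispatches.
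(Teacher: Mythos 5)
Your proof is correct and follows essentially the same route as the paper's: both reduce the claim to the observation that, for $n\ge 2$, the condition $\card{\Nbh_\Gamma[U]}-\compcnt_\Gamma(U)=n-1$ holds if and only if $\Nbh_\Gamma[U]=V$ and $\compcnt_\Gamma(U)=1$, i.e.\ $U\in\ConnDoms(\Gamma)$. Your explicit handling of $U=\emptyset$ just spells out what the paper leaves implicit.
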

\begin{proof}
  Let $U\subset V$.
  As $n \ge 2$,
  $\card{\Nbh_\Gamma[U]} - \compcnt_\Gamma(U) = n-1$
  if and only if $\Nbh_\Gamma[U] = V$ and $\compcnt_\Gamma(U) = 1$.
  The latter two conditions are satisfied if and only if $U\in \ConnDoms(\Gamma)$.
\end{proof}

\begin{rem}
  In \cite{ME18}, the right-hand side of \eqref{eq:dom} is referred to as the
  \emph{connected domination polynomial} of $\Gamma$.
  These polynomials are relatives of the widely studied domination
  polynomials of graphs introduced in \cite{AL00} (where they were called
  dominating polynomials).
\end{rem}

\begin{cor}
  \label{cor:lc}
  Suppose that $\Gamma$ does not contain isolated vertices.
  Let $V_1,\dotsc,V_c\subset V$ be the distinct connected components of
  $\Gamma$.
  Then
  \[
    \Biv_\Gamma(X+1,Y)\Bigl[Y^{\rank(\Gamma)} \Bigr] =
    \prod_{i=1}^c \sum\limits_{D_i \in   \ConnDoms(\Gamma[V_i])}
    X^{\card{D_i}}.
    \pushQED{\qed}\qedhere
    \popQED
  \]
\end{cor}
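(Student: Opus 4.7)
The plan is to reduce the corollary to the connected case already handled by Proposition~\ref{prop:dom} by multiplicativity under disjoint unions.

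First I would observe that since $\Gamma$ has no isolated vertices, every connected component $\Gamma[V_i]$ has at least two vertices, so Proposition~\ref{prop:dom} applies to each of them. Writing $n_i = \card{V_i}$, Proposition~\ref{prop:deg_Y} gives $\deg_Y \Biv_{\Gamma[V_i]}(X,Y) = \rank(\Gamma[V_i]) = n_i - 1$, and consequently
\[
\rank(\Gamma) = \sum_{i=1}^c (n_i - 1) = \sum_{i=1}^c \deg_Y \Biv_{\Gamma[V_i]}(X,Y).
\]

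Next I would apply Proposition~\ref{prop:disjoint_union} iteratively to the decomposition $\Gamma = \Gamma[V_1] \oplus \cdots \oplus \Gamma[V_c]$ to get
\[
\Biv_\Gamma(X+1, Y) = \prod_{i=1}^c \Biv_{\Gamma[V_i]}(X+1, Y).
\]
Because $\rank(\Gamma)$ is exactly the sum of the $Y$-degrees of the factors, extracting the coefficient of $Y^{\rank(\Gamma)}$ from this product forces us to take the leading $Y$-coefficient of each factor:
\[
\Biv_\Gamma(X+1, Y)\Bigl[Y^{\rank(\Gamma)}\Bigr] = \prod_{i=1}^c \Biv_{\Gamma[V_i]}(X+1, Y)\Bigl[Y^{n_i - 1}\Bigr].
\]

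Finally, Proposition~\ref{prop:dom} applied to each connected $\Gamma[V_i]$ (which has $n_i \ge 2$) identifies the $i$th factor with $\sum_{D_i \in \ConnDoms(\Gamma[V_i])} X^{\card{D_i}}$, and substituting yields the claim. There is no real obstacle; the only point requiring a moment of care is verifying that the degrees add up exactly so that no lower-order contributions mix into the leading coefficient, and this is immediate from Proposition~\ref{prop:deg_Y}.
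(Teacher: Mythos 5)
Your argument is correct and is exactly the intended one: the paper states this corollary without further proof precisely because it follows by combining Proposition~\ref{prop:disjoint_union}, Proposition~\ref{prop:deg_Y}, and Proposition~\ref{prop:dom} in the way you describe. The one point needing care --- that the $Y$-degrees of the factors sum to $\rank(\Gamma)$, so only leading coefficients contribute --- is the one you correctly single out and justify.
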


The following is well known.
\begin{thm}[{\cite[{\S A1.1, [GT2]}]{GJ79}}]
  \label{thm:conn_dom_NP}
  The problem of deciding, for a given graph $\Gamma$ and $k\ge 1$, whether
  $\Gamma$ admits a connected dominating set of cardinality at most $k$ is
  NP-complete. 
\end{thm}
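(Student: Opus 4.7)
The plan is to verify both membership in NP and NP-hardness. Membership is essentially immediate: given a candidate subset $D\subset V(\Gamma)$ with $\card{D}\le k$, one can check in polynomial time that $\Nbh_\Gamma[D] = V(\Gamma)$ and that $\Gamma[D]$ is connected (e.g.\ by a breadth-first search), so $D$ serves as a polynomial-size certificate.

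For NP-hardness, my plan is a polynomial-time reduction from the \emph{set cover} problem, which is known to be NP-complete. Given an instance consisting of a universe $U = \{u_1,\dotsc,u_n\}$, a family $\mathcal{S} = \{S_1,\dotsc,S_m\}$ of subsets of $U$, and a bound $k$, I construct a graph $\Gamma$ on vertex set $\{s_1,\dotsc,s_m\}\cup\{u_1,\dotsc,u_n\}$ by making $\{s_1,\dotsc,s_m\}$ a clique and joining $s_i$ to $u_j$ precisely when $u_j\in S_i$. Without loss of generality I may assume $n\ge 2$ and that every $u_j$ belongs to at least one $S_i$ (else no cover exists at all).

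The key claim is that $\Gamma$ admits a connected dominating set of cardinality at most $k$ if and only if the original instance has a set cover of the same size. In one direction, a cover $S_{i_1},\dotsc,S_{i_\ell}$ yields the connected dominating set $\{s_{i_1},\dotsc,s_{i_\ell}\}$: it is connected because the $s$-vertices form a clique, it dominates the $u$-vertices by the covering property, and it dominates the remaining $s$-vertices via the clique.

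In the converse direction, given a connected dominating set $D$ with $\card{D}\le k$, the fact that no two $u$-vertices are adjacent in $\Gamma$ (together with $n\ge 2$) forces $D$ to contain at least one $s$-vertex. For any $u_j\in D$ I replace $u_j$ by some $s_i$ with $u_j\in S_i$; connectedness is preserved because the $s$-vertices of $D$ together with those newly introduced form a clique, and domination only improves. Iterating, I obtain a connected dominating set $D'\subset\{s_1,\dotsc,s_m\}$ of the same size, which corresponds to a set cover. The main subtlety---and the one step deserving care---is this ``pushing'' argument, particularly checking that connectedness survives each substitution and that no degenerate case (very small $k$ or $n$) breaks the correspondence; all other steps are routine bookkeeping.
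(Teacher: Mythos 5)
Your proof is correct, but note that the paper does not actually prove this statement at all: it is quoted as a known result with a citation to Garey and Johnson (entry [GT2], whose comment section records the NP-completeness of the connected variant of Dominating Set), so there is no in-paper argument to compare against. Your reduction from Set Cover is the standard one for this problem and it works: membership in NP is routine, and the clique-on-set-vertices gadget gives the equivalence. The one step you rightly flag as delicate --- replacing each $u_j\in D$ by a set-vertex $s_i$ with $u_j\in S_i$ --- does go through cleanly, for a reason worth making explicit: a $u$-vertex has no neighbours among the other $u$-vertices, so its only neighbours inside $D$ are $s$-vertices, which already form a clique; hence no $u$-vertex can be a cut vertex of $\Gamma[D]$, and removing $u_j$ while adding $s_i$ (adjacent to every $s$-vertex and to $u_j$ itself) preserves both connectedness and domination without increasing $\card{D}$. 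Together with your normalisations ($n\ge 2$, every element covered by some set, which force $D$ to contain at least one $s$-vertex and force covers to be nonempty), the correspondence between covers of size at most $k$ and connected dominating sets of size at most $k$ is complete. So your write-up supplies a self-contained proof where the paper merely cites the literature; the only cost is that you rely on NP-completeness of Set Cover as the external ingredient instead.
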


\begin{proof}[Proof of Proposition~\ref{prop:hard}]
  Combine Theorem~\ref{thm:conn_dom_NP} and Proposition~\ref{prop:dom}.
\end{proof}

We finish this section by showing that typically $\CSP_\Gamma(0,Y) = 0$.
We first record the following consequence of Proposition~\ref{prop:dom}.
\begin{cor}
  \label{cor:ltree}
  Let $\Gamma$ be a tree with $n\ge 3$ vertices and $\ell$ leaves.
  Then $\Biv_\Gamma(X,Y)\Bigl[Y^{n-1}\Bigr] = (X-1)^{n-\ell} X^\ell$.
\end{cor}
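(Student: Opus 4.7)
The plan is to combine Proposition~\ref{prop:dom} with a combinatorial description of the connected dominating sets of a tree, and then perform a change of variables.

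First, I apply Proposition~\ref{prop:dom}: since $n\ge 3$, we have
\[
  \Biv_\Gamma(X+1,Y)\Bigl[Y^{n-1}\Bigr] = \sum_{D\in\ConnDoms(\Gamma)} X^{\card D}.
\]
Thus it suffices to show that, with $I \subset V$ denoting the set of \emph{internal} (non-leaf) vertices of the tree $\Gamma$, the connected dominating sets of $\Gamma$ are precisely the subsets $D\subset V$ with $I\subset D$. Granting this, a straightforward count gives
\[
  \sum_{D\in\ConnDoms(\Gamma)} X^{\card D}
  = \sum_{j=0}^{\ell} \binom{\ell}{j} X^{(n-\ell)+j}
  = X^{n-\ell}(1+X)^{\ell},
\]
since $\card I = n-\ell$ and the remaining elements of $D$ form an arbitrary subset of the $\ell$ leaves. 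Substituting $X\mapsto X-1$ in the identity $\Biv_\Gamma(X+1,Y)\bigl[Y^{n-1}\bigr] = X^{n-\ell}(X+1)^{\ell}$ yields the claim.

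The main substantive step is the characterisation of $\ConnDoms(\Gamma)$ for a tree. For the forward direction, suppose $D\in\ConnDoms(\Gamma)$ and let $v\in I$; since $v$ is a cut vertex of the tree, the components $T_1,\dotsc,T_k$ of $\Gamma - v$ satisfy $k\ge 2$, and every edge of $\Gamma$ not incident to $v$ lies within a single $T_j$. If $v\notin D$, then connectedness of $\Gamma[D]$ forces $D\subset V(T_i)$ for a single $i$; but then no vertex in $T_j$ with $j\ne i$ can be dominated by $D$ (its closed neighbourhood meets only $V(T_j)\cup\{v\}$), a contradiction. Hence $I\subset D$. Conversely, suppose $I\subset D\subset V$. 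Since $n\ge 3$, the subgraph $\Gamma[I]$ is connected (any path in the tree between two internal vertices traverses only internal vertices, as a leaf cannot be an interior vertex of such a path), and every leaf in $D\setminus I$ is adjacent to its unique (internal) neighbour in $I$, so $\Gamma[D]$ is connected. Moreover, every leaf of $\Gamma$ is adjacent to some vertex of $I\subset D$, so $D$ dominates $V$.

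I do not expect any real obstacle beyond verifying the characterisation above; the computation then reduces to the binomial identity for $(1+X)^{\ell}$ and a single change of variable. One small point worth flagging is that the hypothesis $n\ge 3$ is genuinely used twice: once to apply Proposition~\ref{prop:dom} (which requires $n\ge 2$), and once to guarantee that $I$ is nonempty so that the argument ruling out $D=\emptyset$ (via the existence of a cut vertex) actually runs; indeed, for $n=2$ the formula fails because the empty set erroneously contributes to the count of supersets of $I=\emptyset$.
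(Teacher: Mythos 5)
Your proof is correct and follows essentially the same route as the paper: apply Proposition~\ref{prop:dom} and observe that the connected dominating sets of a tree with $n\ge 3$ vertices are exactly the supersets of the set of internal vertices (a fact the paper leaves as "easy to see" and you verify in detail), then sum over subsets of the leaves. Your closing remark about where $n\ge 3$ is genuinely needed is also accurate.
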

\begin{proof}
  Let
  $V^{\mathsmaller{\text{\faLeaf}}}\subset V$ be the set of leaves of $\Gamma$.
  Using $n\ge 3$, it is easy to see that $\ConnDoms(\Gamma) = \{ U :
  V \setminus V^{\mathsmaller{\text{\faLeaf}}} \subset U \subset V\}$.
  Hence, by by Proposition~\ref{prop:dom},
  $\Biv_\Gamma(X,Y)\Bigl[Y^{n-1}\Bigr] = (X-1)^{n-\ell}\sum_{U'\subset
    V^{\mathsmaller{\text{\faLeaf}}}}(X-1)^{\card{U'}} = (X-1)^{n-\ell}X^\ell$.
\end{proof}

\begin{cor}
  \label{cor:CSP(0,Y)}
  Let $\Gamma$ be an arbitrary graph.
  Then $\CSP_\Gamma(0,Y) = 0$
  unless $\Gamma \approx \CG_2^{\oplus r}$, in which
  case $\CSP_\Gamma(0,Y) = (-1)^rY^r$.
\end{cor}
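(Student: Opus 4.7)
The plan is to inspect the summands of \eqref{eq:csp} at $X = 0$. The key observation is that the $X$-exponent $m + \compcnt_\Gamma(U) - \card{\Nbh_\Gamma[U]}$ is always non-negative, since the proof of Proposition~\ref{prop:deg_Y} gives $\card{\Nbh_\Gamma[U]} - \compcnt_\Gamma(U) \le \rank(\Gamma) \le m$. Thus a summand contributes at $X = 0$ only when this exponent vanishes, which forces $\card{\Nbh_\Gamma[U]} - \compcnt_\Gamma(U) = m$. If $\Gamma$ is not a forest then $\rank(\Gamma) < m$ strictly, so no summand survives and $\CSP_\Gamma(0, Y) = 0$ immediately.

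When $\Gamma$ is a forest, $m = \rank(\Gamma)$, and every surviving summand lives in $Y$-degree $\rank(\Gamma)$. Hence
\[
  \CSP_\Gamma(0, Y) = \Biv_\Gamma(0, Y)\bigl[Y^{\rank(\Gamma)}\bigr] \cdot Y^{\rank(\Gamma)},
\]
reducing the task to computing the leading coefficient of $\Biv_\Gamma$ in $Y$ at $X = 0$. Decomposing $\Gamma = T_1 \oplus \cdots \oplus T_c$ into its tree components and applying Proposition~\ref{prop:disjoint_union}, this leading coefficient factors as $\prod_{i=1}^c \Biv_{T_i}(0, Y)\bigl[Y^{\rank(T_i)}\bigr]$, so the problem becomes a per-tree computation.

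The concluding step is a three-way case analysis on each tree factor $T$ on $n$ vertices: for $n = 1$, $\Biv_{\Point}(X, Y) = X$ vanishes at $X = 0$; for $n = 2$, Example~\ref{ex:CG_DG}\ref{ex:CG_DG_1} gives leading coefficient $X^2 - 1$ with value $-1$ at $X = 0$; and for $n \ge 3$, Corollary~\ref{cor:ltree} yields leading coefficient $(X-1)^{n-\ell} X^\ell$, which vanishes at $X = 0$ because every tree on at least three vertices has $\ell \ge 2$ leaves. The product is therefore non-zero precisely when every component equals $\CG_2$, giving $\Gamma \approx \CG_2^{\oplus r}$ and $\CSP_\Gamma(0, Y) = (-1)^r Y^r$. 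No serious obstacle arises; the one mild point of care is that Corollary~\ref{cor:ltree} requires $n \ge 3$, so the small trees $\Point$ and $\CG_2$ must be dispatched separately.
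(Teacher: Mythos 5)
Your proposal is correct and follows essentially the same route as the paper: both arguments use the bound $\card{\Nbh_\Gamma[U]}-\compcnt_\Gamma(U)\le\rank(\Gamma)\le m$ from Proposition~\ref{prop:deg_Y} to kill everything except the forest case, reduce to connected components via Proposition~\ref{prop:disjoint_union}, and then dispatch trees by combining Corollary~\ref{cor:ltree} (for $n\ge 3$, where $\ell\ge 2$ forces vanishing) with the explicit formulae for $\CG_1$ and $\CG_2$. The only difference is cosmetic ordering (you rule out non-forests before splitting into components, the paper reduces to connected graphs first), so nothing further is needed.
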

\begin{proof}
  Using Proposition~\ref{prop:disjoint_union} (and its evident analogue for
  $\CSP_\Gamma(X,Y)$), we may assume that $\Gamma$ is connected.
  If $m > \rank(\Gamma)$, then $X$ divides $\CSP_\Gamma(X,Y)$ by
  Proposition~\ref{prop:deg_Y}.
  Thus, suppose that $m = \rank(\Gamma) = n-1$, i.e.\ $\Gamma$ is a tree.
  Since $\CSP_{\CG_1}(X,Y) = X$ and $\CSP_{\CG_2}(X,Y) = (X^2-1)Y + X$, we may
  assume that $n \ge 3$.
  Corollary~\ref{cor:ltree} then implies that $\CSP_\Gamma(0,Y) = 0$.
\end{proof}

\begin{rem}
  The constant term and leading coefficient of $\Biv_\Gamma(X,Y)$ as a
  polynomial in $X-1$ are easily determined:
  $\Biv_\Gamma(1,Y) = 1$ and $\Biv_\Gamma(X+1,Y) = X^n Y^{\rank(\Gamma)}
  + \mathcal O(X^{n-1})$.
  The constant term of $\Biv_\Gamma(X,Y)$ in $X$, 
  i.e.~the polynomial $\Biv_\Gamma(0,Y) = \sum_{U\subset V}(-1)^{\card U}
  Y^{\card{\Nbh_\Gamma[U]}-\compcnt_\Gamma(U)}$, seems to be more mysterious.
\end{rem}

\section{The degrees of class-counting polynomials}
\label{s:ccp}

In this section, we consider the degrees of class-counting
polynomials~$f_\Gamma(X) = \CSP_\Gamma(X,1)$ (see Theorem~\ref{thm:cico/poly}
and Corollary~\ref{cor:univ_formula}).
As before, let $\Gamma = (V,E)$ be a graph with $m$ edges and $n$ vertices.

\subsection[The invariant eta]{Interpreting $\deg(f_\Gamma(X))$: the invariant $\eta(\Gamma)$}

For $U\subset V$, let $\nbhcnt_\Gamma(U) = \card{\Nbh_\Gamma[U]\setminus U}$,
the number of vertices in $V\setminus U$ with a neighbour in $U$.
Recall that $\compcnt_\Gamma(U)$ denotes the number of connected components of
$\Gamma[U]$.
Define
\begin{equation}
  \label{eq:eta}
  \eta(\Gamma) = \max_{U\subset V}\Bigl(\compcnt_\Gamma(U) - \nbhcnt_\Gamma(U)\Bigr) \ge 0.
\end{equation}

Corollary~\ref{cor:univ_formula} implies
\begin{equation}
  \deg(f_\Gamma(X)) = m + \eta(\Gamma).  \label{eq:unideg}
\end{equation}

Our proof of Proposition~\ref{prop:hard} does not imply that computing
$f_\Gamma(X) = X^m \Biv_\Gamma(X,X^{-1})$ is NP-hard, motivating
Question~\ref{qu:computing_f}.

\begin{question}
  Is there a polynomial-time algorithm for computing $\eta(\Gamma)$?
\end{question}

\begin{rem}
  The author is unaware of previous investigations of the numbers
  $\eta(\Gamma)$ in the literature. 
  At a formal level, $\eta(\Gamma)$ is reminiscent of other
  graph-theoretic invariants such as~\textit{critical independence numbers}~\cite{Zha90}
  (which can be computed in polynomial time).
\end{rem}

In the following, we establish bounds for $\eta(\Gamma)$.
Let $\alpha(\Gamma)$ denote the \emph{independence number} of $\Gamma$, i.e.\
the maximal cardinality of an independent set of vertices.
Clearly,

\begin{equation}
  \label{eq:eta_alpha}
  \eta(\Gamma) \le \max_{U\subset V} \compcnt_\Gamma(U) = \alpha(\Gamma).
\end{equation}

While $\eta(\Gamma)$ can be much smaller than $\alpha(\Gamma)$
(cf.\ Proposition~\ref{prop:matteo}\ref{prop:matteo2}), the bound
$\eta(\Gamma) \le \alpha(\Gamma)$ will be useful in our proof of
Proposition~\ref{prop:degf_bounds} below.

Let $c$ be the number of connected components of $\Gamma$.
The case $U = V$ in \eqref{eq:eta} shows that $\eta(\Gamma) \ge c$.
Since $\eta(\Gamma_1\oplus\Gamma_2) = \eta(\Gamma_1) + \eta(\Gamma_2)$, we may
assume that $\Gamma$ is connected.

\begin{prop}
  \label{prop:eta_star}
  Let $\Gamma$ be connected and $n\ge 4$.
  Then $\eta(\Gamma) \le n-2$ with equality if and only if $\Gamma\approx \Star_{n-1}$. 
\end{prop}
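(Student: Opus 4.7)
The plan is to push on the combinatorial inequality $\compcnt_\Gamma(U) + \nbhcnt_\Gamma(U) \le \card{\Nbh_\Gamma[U]} \le n$ that is already implicit in the setup. Rewriting this as
\[
\compcnt_\Gamma(U) - \nbhcnt_\Gamma(U) \le n - 2\nbhcnt_\Gamma(U),
\]
I observe that if some $U\subset V$ witnesses $\eta(\Gamma) \ge n-1$, then necessarily $\nbhcnt_\Gamma(U)=0$; that is, $U$ is a union of connected components of $\Gamma$. As $\Gamma$ is connected, this forces $U\in\{\emptyset,V\}$, whence $\compcnt_\Gamma(U)\in\{0,1\}$. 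Since $n\ge 4$, neither value reaches $n-1$, establishing the bound $\eta(\Gamma) \le n-2$.

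For the equality case, suppose $\eta(\Gamma)=n-2$ is witnessed by $U$. The inequality now yields $\nbhcnt_\Gamma(U)\le 1$. The subcase $\nbhcnt_\Gamma(U)=0$ is ruled out exactly as above: $U$ must lie in $\{\emptyset,V\}$, but then $\compcnt_\Gamma(U)\in\{0,1\}$ cannot equal $n-2\ge 2$. Thus $\nbhcnt_\Gamma(U)=1$ and $\compcnt_\Gamma(U)=n-1$. Combining $\compcnt_\Gamma(U)\le \card U$ with $\card U + \nbhcnt_\Gamma(U) \le n$ forces $\card U = n-1$, and $\compcnt_\Gamma(U)=\card U$ says $U$ is independent. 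Letting $\{v\}=V\setminus U$, connectedness of $\Gamma$ demands that $v$ be adjacent to every vertex of $U$, so $\Gamma\approx \Star_{n-1}$.

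Conversely, taking $U$ to be the set of $n-1$ leaves of $\Star_{n-1}$ gives $\compcnt_\Gamma(U)=n-1$ and $\nbhcnt_\Gamma(U)=1$, so $\eta(\Star_{n-1})\ge n-2$, and the general upper bound yields equality. There is no serious obstacle; the main care is to verify that the hypothesis $n\ge 4$ is exactly what excludes the degenerate configurations $U\in\{\emptyset,V\}$ when analysing the equality case, and that $\compcnt_\Gamma(U)\le\card U$ together with $\card U + \nbhcnt_\Gamma(U)\le n$ pins down the structure of $U$ once $\nbhcnt_\Gamma(U)=1$.
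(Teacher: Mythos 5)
Your proof is correct and follows essentially the same route as the paper's: both split off the trivial case $U\in\{\emptyset,V\}$ (equivalently, $\nbhcnt_\Gamma(U)=0$ for connected $\Gamma$), use $\compcnt_\Gamma(U)\le n-1$ and $\nbhcnt_\Gamma(U)\ge 1$ for proper nonempty $U$ to get the bound, and show that equality forces $\compcnt_\Gamma(U)=n-1$, $\nbhcnt_\Gamma(U)=1$, which pins down the star. Your repackaging via $\compcnt_\Gamma(U)-\nbhcnt_\Gamma(U)\le n-2\nbhcnt_\Gamma(U)$ is a cosmetic variant, and your equality analysis is, if anything, slightly more detailed than the paper's.
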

\begin{proof}
  For $U\in\{\emptyset, V\}$, we have $\compcnt_\Gamma(U) - \nbhcnt_\Gamma(U)
  \le 1 < n-2$.
  Let $U\subset V$ with $\emptyset\not= U\not= V$.
  Then $\compcnt_\Gamma(U) \le n - 1$ and
  $\nbhcnt_\Gamma(U) > 0$ since $\Gamma$ is connected.
  Hence, $\compcnt_\Gamma(U) - \nbhcnt_\Gamma(U) \le n-2$ and $\eta(\Gamma) \le n-2$.
  Moreover, if $\compcnt_\Gamma(U) - \nbhcnt_\Gamma(U) = n-2$, then
  $\compcnt_\Gamma(U) = n-1$ and $\nbhcnt_\Gamma(U) = 1$.
  This is equivalent to $\Gamma$ being a star graph whose centre is the unique
  vertex in $V\setminus U$.
\end{proof}

By Proposition~\ref{prop:eta_star}, $\eta(\Gamma)$ rarely attains its maximal
value among graphs with~$n$ vertices.
In contrast, $\eta(\Gamma) = 1$ occurs frequently.
Note that $\eta(\CG_n) = 1$ by Example~\ref{ex:univ_CG}.
For complete bipartite graphs, we obtain the following.

\begin{prop}
  $\eta(\CG_{a,b}) = \max(1,\abs{a-b})$.
\end{prop}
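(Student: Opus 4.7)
The plan is a direct case analysis exploiting the bipartite structure of $\CG_{a,b}$. Write the two parts as $A$ and $B$ with $\card A = a$ and $\card B = b$, so that $A$ is an independent set entirely connected to $B$ (which is also independent). Any $U \subset V = A \cup B$ decomposes uniquely as $U = U_A \cup U_B$ with $U_A = U \cap A$ and $U_B = U \cap B$. I will compute $\compcnt_\Gamma(U) - \nbhcnt_\Gamma(U)$ in the four cases determined by whether $U_A$ and $U_B$ are empty, then maximise.

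If $U_A = U_B = \emptyset$, the contribution is $0$. If $U_A \ne \emptyset = U_B$, then $\Gamma[U] = \Gamma[U_A]$ is edgeless, so $\compcnt_\Gamma(U) = \card{U_A}$; the open neighbourhood of $U_A$ in $V \setminus U$ is exactly $B$, since each vertex of $U_A$ is adjacent to all of $B$ but to no vertex of $A \setminus U_A$. Thus $\nbhcnt_\Gamma(U) = b$ and the difference equals $\card{U_A} - b$, maximised at $U_A = A$ with value $a - b$. By symmetry the case $U_B \ne \emptyset = U_A$ contributes at most $b - a$. Finally, if both $U_A$ and $U_B$ are non-empty, then $\Gamma[U]$ is the complete bipartite graph on $(U_A, U_B)$, hence connected, so $\compcnt_\Gamma(U) = 1$; every vertex of $A \setminus U_A$ has a neighbour in $U_B$ and every vertex of $B \setminus U_B$ has a neighbour in $U_A$, so $\Nbh_\Gamma[U] = V$ and $\nbhcnt_\Gamma(U) = a + b - \card{U_A} - \card{U_B}$. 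The difference is $1 - a - b + \card{U_A} + \card{U_B}$, maximised at $U = V$ with value $1$.

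Combining the four cases gives
\[
  \eta(\CG_{a,b}) = \max(0,\, a-b,\, b-a,\, 1) = \max(1, \abs{a-b}),
\]
as required. There is no real obstacle here: each of the four cases is a one-line computation, and the bipartite structure makes both $\compcnt_\Gamma(U)$ and $\nbhcnt_\Gamma(U)$ transparent. The only mild point to watch is that in the mixed case one must verify that every vertex outside $U$ actually has a neighbour in $U$, which uses precisely the non-emptiness of both $U_A$ and $U_B$ and the completeness of the bipartition.
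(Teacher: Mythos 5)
Your proof is correct, but it takes a different route from the paper. The paper's proof is a one-liner: it reads off $\eta(\CG_{a,b})$ ``by inspection'' from the explicit formula for $f_{\CG_{a,b}}(X)$ already computed in Example~\ref{ex:bipartite}, using the degree relation $\deg(f_\Gamma(X)) = m + \eta(\Gamma)$ of equation~\eqref{eq:unideg}; that formula was itself obtained from the join identity of Proposition~\ref{prop:joins} applied to $\CG_{a,b} = \DG_a \join \DG_b$. You instead compute $\eta(\CG_{a,b})$ directly from the definition~\eqref{eq:eta}, splitting $U$ according to its intersections with the two parts and evaluating $\compcnt_\Gamma(U) - \nbhcnt_\Gamma(U)$ in each of the four cases. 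Your case analysis is complete and each evaluation is right: in the mixed case you correctly use non-emptiness of both $U_A$ and $U_B$ to get $\Nbh_\Gamma[U] = V$ and $\compcnt_\Gamma(U) = 1$, and the final maximum $\max(0,\,a-b,\,b-a,\,1) = \max(1,\abs{a-b})$ is as claimed. What the paper's route buys is economy, since the work is already done in the join computation; what yours buys is self-containedness and a transparent identification of which sets $U$ attain the maximum (namely $U = A$, $U = B$, or $U = V$), which the inspection argument leaves implicit.
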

\begin{proof}
  This follows by inspection from the formula for $f_{\CG_{a,b}}(X)$ in
  Example~\ref{ex:bipartite}.
\end{proof}

Hence, $\eta(\CG_{a,b}) = 1$ if and only if $\abs{a-b}\le 1$.
To obtain further examples of graphs~$\Gamma$ with $\eta(\Gamma) = 1$, recall
that a graph is \emph{claw-free} if it does not contain $\CG_{1,3} \approx
\Star_3$ as an induced subgraph.
The following proposition and its proof are due to Matteo Cavaleri.
The author thanks him for kindly permitting this material to be included here.

\begin{prop}
  \label{prop:matteo}
  \quad
  \begin{enumerate}[(i)]
  \item
    \label{prop:matteo1}
    $\eta(\Gamma) = \max\Bigl(\compcnt_\Gamma(U) + \card U - \card V: U\subset
    V\text{ is a dominating set of } \Gamma\Bigr)$.
  \item
    \label{prop:matteo2}
    If $\Gamma$ is claw-free and connected, then $\eta(\Gamma) = 1$
    and thus $\deg(f_\Gamma(X)) = m + 1$.
  \end{enumerate}
\end{prop}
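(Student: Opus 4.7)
The plan for part~\ref{prop:matteo1} is to rewrite the quantity $\compcnt_\Gamma(U)-\nbhcnt_\Gamma(U)$ in a way that privileges dominating sets. If $U$ is dominating, then $\Nbh_\Gamma[U]=V$, hence $\nbhcnt_\Gamma(U)=\card V-\card U$, which gives exactly $\compcnt_\Gamma(U)+\card U-\card V$. So the $\ge$ direction of \ref{prop:matteo1} is immediate. For the reverse direction, I would start from an arbitrary $U\subset V$ and enlarge it to $U':=U\cup(V\setminus\Nbh_\Gamma[U])$, which is manifestly dominating. The added vertices have, by construction, no neighbour in $\Nbh_\Gamma[U]$ and in particular no neighbour in $U$, so the components of $\Gamma[U']$ split as those of $\Gamma[U]$ together with those of $\Gamma[V\setminus\Nbh_\Gamma[U]]$; in particular, $\compcnt_\Gamma(U')\ge\compcnt_\Gamma(U)$. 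Combined with the arithmetic identity $\card{U'}-\card V=\card U-\card{\Nbh_\Gamma[U]}=-\nbhcnt_\Gamma(U)$, this yields $\compcnt_\Gamma(U')+\card{U'}-\card V\ge\compcnt_\Gamma(U)-\nbhcnt_\Gamma(U)$, which proves the $\le$ direction.

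For part~\ref{prop:matteo2}, the plan is to show $1\le\eta(\Gamma)\le 1$. The lower bound is obtained by taking $U=V$, which contributes $\compcnt_\Gamma(V)-0=1$ by connectedness. For the upper bound, by part~\ref{prop:matteo1} it suffices to show that every dominating set $U$ satisfies $\compcnt_\Gamma(U)\le\card{V\setminus U}+1$. Write the components of $\Gamma[U]$ as $C_1,\dotsc,C_k$. The main step is the following claim, which is where the claw-free hypothesis enters: each $v\in V\setminus U$ has neighbours in at most two of the $C_i$. Indeed, if $v$ had neighbours $u_i\in C_i$, $u_j\in C_j$, $u_\ell\in C_\ell$ in three distinct components, then $u_i,u_j,u_\ell$ would be pairwise non-adjacent (being in different components of $\Gamma[U]$), whence $\{v,u_i,u_j,u_\ell\}$ would induce a $\Star_3$ in $\Gamma$, contradicting claw-freeness.

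Given this, I would form an auxiliary multigraph $H$ with vertex set $\{C_1,\dotsc,C_k\}$ by assigning to each $v\in V\setminus U$ at most one edge: if $v$ has neighbours in exactly two components $C_\alpha,C_\beta$, contribute the edge $\{C_\alpha,C_\beta\}$, and otherwise contribute nothing. Any walk in $\Gamma$ between vertices of distinct components $C_i,C_j$ must, each time it leaves $U$ through a vertex $v\in V\setminus U$ and re-enters $U$, jump between two components in which $v$ has neighbours; thus such a walk projects to a walk in $H$ between $C_i$ and $C_j$. Connectedness of $\Gamma$ therefore forces $H$ to be connected, so $H$ has at least $k-1$ edges, giving $k-1\le\card{V\setminus U}$ as required. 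Finally, $\deg(f_\Gamma(X))=m+\eta(\Gamma)=m+1$ follows from~\eqref{eq:unideg}. The main obstacle here is the bookkeeping in the auxiliary graph argument; the purely local forbidden-subgraph observation and the translation into connectivity of $H$ are what carry the proof.
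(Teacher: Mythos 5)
Your part (i) is correct and amounts to the paper's argument carried out in a single step: the paper enlarges $U$ by one connected component of $\Gamma[V\setminus\Nbh_\Gamma[U]]$ at a time and checks that $\compcnt_\Gamma(\cdot)-\nbhcnt_\Gamma(\cdot)$ strictly increases, whereas you adjoin all of $V\setminus\Nbh_\Gamma[U]$ at once; both rest on the same observation that there are no edges between $U$ and $V\setminus\Nbh_\Gamma[U]$, so this part is fine.

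Part (ii) has a genuine gap. Your local observation --- a vertex outside $U$ with neighbours in three distinct components of $\Gamma[U]$ yields an induced $\Star_3$ --- is exactly the right use of claw-freeness, and it is the same one the paper makes. The problem is the global step: the auxiliary multigraph $H$ need not be connected, so the claim that it has at least $k-1$ edges is unjustified. Your walk-projection argument only works when each excursion of the walk outside $U$ consists of a single vertex; if the walk passes through two or more consecutive vertices of $V\setminus U$, no single one of them need have neighbours in both of the components being joined. Concretely, take $\Gamma=\Path 4$ with vertices $1-2-3-4$ and the dominating set $U=\{1,4\}$: here $k=2$, vertex $2$ has its only $U$-neighbour in the component $\{1\}$ and vertex $3$ has its only $U$-neighbour in $\{4\}$, so neither contributes an edge and $H$ consists of two isolated vertices even though $\Gamma$ is connected. (The inequality $\compcnt_\Gamma(U)\le\card{V\setminus U}+1$ for every dominating set $U$ is nevertheless true --- it follows from part (i) once $\eta(\Gamma)=1$ is known --- but your argument does not establish it.) The paper sidesteps this by not bounding $\compcnt_\Gamma(U)$ for an arbitrary dominating set: it takes a $\subset$-maximal dominating set $U$ attaining $\eta(\Gamma)$ and notes that if $U\ne V$, then adjoining any $x\in V\setminus U$ must strictly decrease $\compcnt_\Gamma(\cdot)+\card{\cdot}$, which forces $x$ to have neighbours in at least three components of $\Gamma[U]$ and hence produces a claw. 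Replacing your auxiliary-graph step by this extremal argument repairs the proof; the claw-free input is unchanged.
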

\begin{proof}
  \quad
  \begin{enumerate}
  \item
    Let $U\subset V$ with $\Nbh_\Gamma[U]\not= V$.
    Let $C\subset V$ be a connected component of
    $\Gamma[V\setminus \Nbh_\Gamma[U]]$.
    Clearly, $\compcnt_\Gamma(U\cup C) = \compcnt_\Gamma(U) + 1$.
    Let $x\in\Nbh_\Gamma[U\cup C]\setminus(U\cup C)$.
    Then $x\sim y$ for some $y\in U \cup C$.
    Suppose that $x\not\in\Nbh_\Gamma[U]$ so that $y\in C$.
    Then $x\in C$ by the definition of $C$.
    This contradiction shows that
    $\Nbh_\Gamma[U\cup C]\setminus(U\cup C)
    \subset\Nbh_\Gamma[U]\setminus U$.
    Hence, $\compcnt_\Gamma(U\cup C) - \nbhcnt_\Gamma(U\cup C) >
    \compcnt_\Gamma(U) - \nbhcnt_\Gamma(U)$.
    It follows that the maximal value of $\compcnt_\Gamma(U)-\nbhcnt_\Gamma(U)$ is
    attained for a dominating set $U$;
    in that case, $\nbhcnt_\Gamma(U) = \card V - \card U$.
  \item
    Let $U\subset V$ be a $\subset$-maximal dominating set with
    $\compcnt_\Gamma(U) + \card U - \card V = \eta(\Gamma)$.
    Suppose that $U\not= V$.
    Choose $x\in V\setminus U$.
    By maximality of $U$, $\compcnt_\Gamma(U) + \card U >
    \compcnt_\Gamma(U\cup \{x\}) + \card U + 1$.
    Hence, there are distinct connected components
    $C_1,C_2,C_3$ of $\Gamma[U]$ such that $\Gamma[C_1\cup C_2\cup
    C_3\cup\{x\}]$ is connected.
    Choose $c_i\in C_i$ with $x\sim c_i$.
    Then $\Gamma[\{c_1,c_2,c_3,x\}] \approx \Star_3$.
    We conclude that if $\Gamma$ is claw-free and connected, then $U=V$
    and thus $\eta(\Gamma) = 1$.
    \qedhere
  \end{enumerate}
\end{proof}

Let $\Delta(\Gamma)$ denote the maximum vertex degree of $\Gamma$.

\begin{lemma}
  \label{lem:tree_eta_Delta}
  Let $\Tau$ be a tree.
  Then $\eta(\Tau) \ge \Delta(\Tau)-1$.
\end{lemma}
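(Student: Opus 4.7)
The plan is to exhibit a specific subset $U \subset V(\Tau)$ achieving $\compcnt_\Tau(U) - \nbhcnt_\Tau(U) \ge \Delta(\Tau) - 1$, whence the claim follows by the definition \eqref{eq:eta} of $\eta$.

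First, I would handle the trivial case $n = 1$: then $\Delta(\Tau) = 0$ and the bound $\eta(\Tau) \ge -1$ is automatic since $\eta$ is always non-negative (witnessed by $U = \emptyset$). So assume $n \ge 2$ and let $v \in V$ be a vertex of maximum degree $d = \Delta(\Tau)$.

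The key choice is $U := V \setminus \{v\}$. I would then verify two things. First, because $\Tau$ is a tree and $v$ is a cut-vertex for each of its incident edges, removing $v$ from $\Tau$ splits the graph into exactly $\deg_\Tau(v) = d$ connected components (one per neighbour of $v$); hence $\compcnt_\Tau(U) = d$. Second, since $v$ is adjacent to at least one vertex of $U$ (using $n \ge 2$, so $d \ge 1$), we have $v \in \Nbh_\Tau[U]$, and $v$ is the unique vertex of $V$ not already in $U$; therefore $\Nbh_\Tau[U] \setminus U = \{v\}$ and $\nbhcnt_\Tau(U) = 1$.

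Combining these, $\compcnt_\Tau(U) - \nbhcnt_\Tau(U) = d - 1 = \Delta(\Tau) - 1$, so $\eta(\Tau) \ge \Delta(\Tau) - 1$. There is essentially no obstacle here; the only subtlety is confirming that in a tree, deleting a vertex of degree $d$ yields exactly $d$ components (a standard consequence of the fact that trees have a unique path between any two vertices, so distinct branches at $v$ cannot reconnect).
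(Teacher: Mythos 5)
Your proposal is correct and takes essentially the same route as the paper: the paper also picks a vertex $u$ of degree $d$ and uses the set consisting of all other vertices (described there as the union of the $d$ branches of the rooted tree $(\Tau,u)$, which is exactly your $V\setminus\{v\}$), yielding $\compcnt_\Tau = d$ and $\nbhcnt_\Tau = 1$. Your explicit treatment of the $n=1$ case is a minor addition the paper leaves implicit.
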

\begin{proof}
  Let $w_1,\dotsc,w_d$ be the distinct vertices adjacent to a vertex $u$ of
  $\Tau$.
  Let $W_i$ consist of $w_i$ and all its descendants in the rooted tree
  $(\Tau,u)$.
  Define $W := W_1\cup\dotsb \cup W_d$.
  By construction, $\compcnt_\Tau(W) = d$ and $\nbhcnt_\Tau(W) = 1$ whence
  $\eta(\Tau) \ge d - 1$.
\end{proof}

\begin{cor}
  \label{cor:tree_eta1}
  Let $\Tau$ be a tree.
  Then $\eta(\Tau) = 1$ if and only if $\Tau$ is a path.
\end{cor}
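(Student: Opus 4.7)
The plan is to combine Lemma \ref{lem:tree_eta_Delta} with Proposition \ref{prop:matteo}\ref{prop:matteo2}; the corollary falls out as a direct application of these two results, so no new combinatorial machinery is needed.

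For the forward direction, I would assume $\eta(\Tau) = 1$ and invoke Lemma \ref{lem:tree_eta_Delta} to conclude $\Delta(\Tau) \le 2$. Any connected graph with maximum degree at most $2$ is either a path or a cycle; since $\Tau$ is a tree and therefore acyclic, $\Tau$ must be a path. (The degenerate cases where $\Tau$ has one or two vertices are paths, and the implication holds trivially.)

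For the reverse direction, I would observe that a path $\Tau$ contains no vertex of degree $3$, so it cannot contain $\Star_3 \approx \CG_{1,3}$ as an induced subgraph; that is, $\Tau$ is claw-free. Since a path is also connected, Proposition \ref{prop:matteo}\ref{prop:matteo2} immediately yields $\eta(\Tau) = 1$. The only case requiring a sanity check is $\Tau = \Point$: here $V = \{v\}$ is the only nonempty subset and $\compcnt_\Tau(V) - \nbhcnt_\Tau(V) = 1$, so $\eta(\Point) = 1$ by direct inspection.

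There is no serious obstacle here; the whole argument amounts to noting that, for trees, the lower bound $\eta(\Tau) \ge \Delta(\Tau) - 1$ forces $\Delta(\Tau) \le 2$ when $\eta(\Tau) = 1$, and that paths are precisely the claw-free trees (equivalently, the connected graphs of maximum degree at most $2$ without cycles), at which point Proposition \ref{prop:matteo}\ref{prop:matteo2} supplies the other inequality for free.
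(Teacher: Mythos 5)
Your proof is correct and follows essentially the same route as the paper: Lemma~\ref{lem:tree_eta_Delta} gives $\eta(\Tau)>1$ whenever $\Tau$ is not a path (via $\Delta(\Tau)\ge 3$), and Proposition~\ref{prop:matteo}\ref{prop:matteo2} handles the converse since paths are connected and claw-free. The extra checks (acyclicity ruling out cycles, the one-vertex case) are fine but not needed beyond what the paper's two-line argument already covers.
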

\begin{proof}
  By Lemma~\ref{lem:tree_eta_Delta}, $\eta(\Tau) > 1$ unless $\Tau$ is a path.
  By Proposition~\ref{prop:matteo}\ref{prop:matteo2} or equation~\eqref{eq:f_path},
  we have $\eta(\Path n) = 1$.
\end{proof}

\subsection[Upper and lower bounds]{Upper and lower bounds for $\deg(f_\Gamma(X))$}

We obtain sharp bounds for $\deg(f_\Gamma(X))$ as $\Gamma$ ranges over all
graphs with $n$ vertices.

\begin{prop}
  \label{prop:degf_bounds}
  Let $\Gamma$ be a graph with $n \ge 1$ vertices.
  Then
  $n \le \deg(f_\Gamma(X)) \le \binom n 2 + 1$.
  The lower bound is attained if and only if $\Gamma$ is a disjoint union of
  paths.
  The upper bound is attained if and only if $\Gamma$ is complete or $n = 2$.
\end{prop}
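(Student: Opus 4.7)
The plan is to leverage equation \eqref{eq:unideg}, which reduces the statement to the compound inequality $n \le m + \eta(\Gamma) \le \binom{n}{2} + 1$, along with its two equality analyses.

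For the lower bound $m + \eta(\Gamma) \ge n$, I would note that $m \ge n - c$, where $c$ is the number of connected components of $\Gamma$ (every component contains a spanning tree), and that $\eta(\Gamma) \ge c$ is immediate by choosing $U = V$ in \eqref{eq:eta}. Adding these yields the desired inequality. Equality forces $m = n - c$, hence $\Gamma$ is a forest, and $\eta(\Gamma) = c$; using the additivity $\eta(\Gamma_1 \oplus \Gamma_2) = \eta(\Gamma_1) + \eta(\Gamma_2)$ noted in the text together with Corollary~\ref{cor:tree_eta1}, the latter is equivalent to each tree component being a path.

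For the upper bound I would invoke Proposition~\ref{prop:matteo}\ref{prop:matteo1} to select a dominating set $U$ realising $\eta(\Gamma)$, so that $\eta(\Gamma) = c + k - n$, where $k = \card U$, $c = \compcnt_\Gamma(U)$, and the components of $\Gamma[U]$ have sizes $k_1, \dotsc, k_c$. The crucial observation is a non-edge count: any two vertices lying in distinct components of $\Gamma[U]$ are non-adjacent in $\Gamma$, hence
\[
\binom{n}{2} - m \;\ge\; \sum_{i<j} k_i k_j \;\ge\; \binom{c}{2} \;\ge\; c - 1 \;\ge\; c + k - n - 1,
\]
where the last step uses $k \le n$. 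Rearranging gives $m + \eta(\Gamma) \le \binom{n}{2} + 1$.

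The only step requiring genuine care is the equality analysis for the upper bound, where every inequality in the displayed chain must be tight. Tightness of the outermost step forces $k = n$, i.e.\ $U = V$. When $c \ge 2$, the identity $\binom{c}{2} = c - 1$ then forces $c = 2$, and $\sum_{i<j} k_i k_j = \binom{c}{2} = 1$ forces $k_1 = k_2 = 1$, so $n = 2$ and $\Gamma = \DG_2$. The remaining case $c = 1$ forces $\Gamma$ to be connected with $\binom{n}{2} - m = 0$, i.e.\ $\Gamma \approx \CG_n$. Consequently, for $n \ge 3$ only $\CG_n$ attains the upper bound, whereas for $n \le 2$ the upper and lower bounds coincide and every graph on $n$ vertices attains both.
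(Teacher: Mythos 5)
Your argument is correct, and while the lower bound runs essentially parallel to the paper's (the paper reduces to the connected case and combines $m\ge n-1$, $\eta(\Gamma)\ge 1$, and Corollary~\ref{cor:tree_eta1}; your componentwise version with $m\ge n-c$ and $\eta(\Gamma)\ge c$ is the same argument), your upper bound takes a genuinely different route. The paper writes $m=\binom n2-k$, invokes Hansen's bound (Lemma~\ref{lem:hansen}) to get $\alpha(\Gamma)\le k$ for $k\ge 2$, passes through $\eta(\Gamma)\le\alpha(\Gamma)$ from \eqref{eq:eta_alpha}, and then settles the leftover case $k=1$ (where $\Gamma\approx\DG_2\join\CG_{n-2}$) separately via Proposition~\ref{prop:matteo}\ref{prop:matteo2}. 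You instead use Proposition~\ref{prop:matteo}\ref{prop:matteo1} to choose a dominating set $U$ realising $\eta(\Gamma)$ and count the non-edges forced between distinct components of $\Gamma[U]$, yielding the chain $\binom n2-m\ge\sum_{i<j}k_ik_j\ge\binom c2\ge c-1\ge c+k-n-1$; each step is elementary and correct (the last uses $k\le n$, and $\binom c2\ge c-1$ holds for all integers $c\ge 0$). This buys you two things: the proof is self-contained (no appeal to an external independence-number bound and no residual case split on $k$), and the characterisation of equality falls out mechanically from tightness of each link, correctly isolating $\CG_n$ and the $n=2$ exception, whereas the paper has to assemble the equality statement from Example~\ref{ex:univ_CG} and the separate $k=1$ analysis. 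The trade-off is only length: the paper's version is shorter because Lemma~\ref{lem:hansen} is quoted as known.
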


Our proof of Proposition~\ref{prop:degf_bounds} will rely on an upper bound
for independence numbers.

\begin{lemma}[{\cite{Han79}}]
  \label{lem:hansen}
  Let $\Gamma$ be a graph with $m$ edges and $n$ vertices.
  Then
  \[
    \alpha(\Gamma) \le \left\lfloor \frac 1 2 + \sqrt{\frac 1 4 + n^2 - n-2m}
    \right\rfloor.
  \]
\end{lemma}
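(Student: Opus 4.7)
The plan is to turn the bound into a quadratic inequality in $\alpha := \alpha(\Gamma)$ via a simple edge-count. The single key observation is that if $I\subset V$ is a maximum independent set, then none of the $\binom{\alpha}{2}$ pairs of vertices inside $I$ can be edges of $\Gamma$, so
\[
  m \;\le\; \binom{n}{2} - \binom{\alpha}{2}.
\]

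From here I would just rearrange. Multiplying by $2$ yields $\alpha^2 - \alpha \le n^2 - n - 2m$, and completing the square in $\alpha$ gives
\[
  \Bigl(\alpha - \tfrac12\Bigr)^{2} \;\le\; \tfrac14 + n^2 - n - 2m.
\]
Taking square roots (the right-hand side is non-negative since the left-hand side is) and then integer parts, using that $\alpha \in \ZZ$, yields the stated bound.

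There is no real obstacle: the argument is two lines once one spots that the ``missing edges inside $I$'' estimate is what is needed. The only thing to be slightly careful about is the direction of the inequality when extracting the square root, which is why the proof is phrased in terms of $(\alpha-\tfrac12)^2$ rather than solving the quadratic by formula; this also makes the floor at the end immediate.
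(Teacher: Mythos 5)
Your proof is correct: the edge count $m \le \binom{n}{2} - \binom{\alpha}{2}$ for a maximum independent set, followed by completing the square and using integrality of $\alpha$ (which is at least $1$, so the square root can be extracted with the right sign), gives exactly the stated bound. The paper does not prove this lemma at all — it is quoted from Hansen's paper \cite{Han79} — and your argument is the standard derivation of that result, so there is nothing to compare beyond noting that you have supplied a proof where the paper only cites one.
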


\begin{proof}[Proof of Proposition~\ref{prop:degf_bounds}]
  As before, let $m$ denote the number of edges of $\Gamma = (V,E)$.
  \begin{enumerate}
  \item
    \textit{Lower bound.}
    We may assume that $\Gamma$ is connected so that $m\ge n-1$.
    As $\eta(\Gamma) \ge 1$, equation~\eqref{eq:unideg}
    shows that $\deg(f_\Gamma(X)) \ge n$ with
    equality if and only if $\Gamma$ is a tree and $\eta(\Gamma) = 1$.
    By Corollary~\ref{cor:tree_eta1}, the latter condition is equivalent to
    $\Gamma\approx \Path n$.
  \item
    \textit{Upper bound.}
    Since $\deg(f_{\CG_n}(X)) = \binom n 2 + 1$ by Example~\ref{ex:univ_CG},
    it suffices to show that $\deg(f_\Gamma(X)) \le \binom n 2$ whenever $m <
    \binom n 2$.
    We may assume that $n\ge 3$.
    Writing $m = \binom n 2 - k$, Lemma~\ref{lem:hansen} shows that
    $\alpha(\Gamma)
    \le \left\lfloor \frac 1 2 + \sqrt{2k+\frac 1 4}
    \right\rfloor$. 
    Hence, if $k\ge 2$,
    then $\alpha(\Gamma) \le k$.
    By equation \eqref{eq:eta_alpha},
    $\deg(f_\Gamma(X)) = m + \eta(\Gamma) \le m + \alpha(\Gamma) \le m + k = \binom n
    2$.
    For $k = 1$, we have $\Gamma\approx\DG_2 \join \CG_{n-2}$ and
    $\deg(f_{\Gamma}(X)) = \binom n 2$ by Proposition~\ref{prop:matteo}\ref{prop:matteo2}.
    (Alternatively, we may combine Corollary~\ref{cor:univ_join} and
    Example~\ref{ex:univ_CG}.) 
    \qedhere
  \end{enumerate}
\end{proof}

\section{Applications to zeta functions of graphical group schemes}
\label{s:app}

We briefly relate some of our findings to recent work on zeta functions of
groups.

\subsection{Reminder: class-counting and conjugacy class zeta functions}
\label{ss:cc_zetas}

The study of zeta functions associated with groups and group-theoretic
counting problems goes back to influential work of Grunewald et
al.~\cite{GSS88}.
Let $\GG$ be a group scheme of finite type over a compact discrete valuation
ring $\fO$ with maximal ideal $\fP$.
The \emph{class-counting zeta function} of $\GG$ is the Dirichlet series
$\zeta^\concnt_\GG(s) = \sum_{i=0}^\infty
\concnt(\GG(\fO/\fP^i))\card{\fO/\fP^i}^{-s}$.
Beginning with work of du~Sautoy~\cite{dS05}, these and closely related series
enumerating conjugacy classes have recently been studied, see \cite{BDOP13,
ask, ask2, Lin19, Lin20, cico}.
Recall the definition of the conjugacy class zeta function $\zeta^\cc_G(s)$
associated with a finite group $G$ from~\S\ref{s:proof_formula}.
Lins~\cite[Def.~1.2]{Lin19} introduced a refinement of $\zeta^\concnt_\GG(s)$,
the \emph{bivariate conjugacy class zeta function} $\zeta^\cc_{\GG}(s_1,s_2)
=\sum_{i=0}^\infty \zeta_{\GG(\fO/\fP^i)}^\cc(s_1) \card{\fO/\fP^i}^{-s_2}$ of
$\GG$ and studied these functions for certain classes of unipotent group
schemes; note that $\zeta^\concnt_\GG(s) = \zeta^\cc_\GG(0,s)$.

Theorem~\ref{thm:cico/poly} is in fact a special case of a far more general
result pertaining to class-counting zeta functions associated with graphical
group schemes.

\begin{thm}[{Cf.~\cite[Cor.~B]{cico}}]
  \label{thm:cico/uniform}
  For each graph $\Gamma$, there exists a rational function $\tilde W_\Gamma(X,Y) \in
  \QQ(X,Y)$
  with the following property:
  for each compact discrete valuation ring $\fO$ with residue field size $q$,
  we have
  $\zeta^\concnt_{\GG_\Gamma\otimes \fO}(s)= \tilde W_\Gamma(q,q^{-s})$.
\end{thm}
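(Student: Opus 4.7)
The plan is to extend Lemma~\ref{lem:cc_zeta_via_Adj} from $\FF_q$ to arbitrary quotients $R = \fO/\fP^i$ of the compact discrete valuation ring $\fO$, and then to invoke the $\fP$-adic rationality machinery developed in \cite{cico}.

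First, I would combine Burnside's orbit-counting identity
\[
  \concnt(\GG_\Gamma(R)) \cdot \card{\GG_\Gamma(R)} = \sum_{g \in \GG_\Gamma(R)} \card{C_{\GG_\Gamma(R)}(g)}
\]
with Proposition~\ref{prop:centralisers}\ref{prop:centralisers1}, which identifies group and Lie centralisers as subsets of $\fh_\Gamma(R) = \HH_\Gamma(R)$. Writing $g = x + z$ with $x \in RV$ and $z$ in the centre of $\fh_\Gamma(R)$, the centraliser depends only on $x$ and is, up to a fixed power of $\card{R}$, the kernel of an $R$-linear map whose cokernel is the adjacency module specialisation $\Adj(\Gamma)_x$. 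Tracking cardinalities via short exact sequences should produce
\[
  \concnt(\GG_\Gamma(R)) = \card{R}^{m-n} \sum_{x \in RV} \card{\Adj(\Gamma)_x}
\]
for every $R = \fO/\fP^i$, generalising the $s = 0$ specialisation of Lemma~\ref{lem:cc_zeta_via_Adj}.

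Substituting this formula into the definition of $\zeta^\concnt_{\GG_\Gamma \otimes \fO}(s)$ yields
\[
  \zeta^\concnt_{\GG_\Gamma \otimes \fO}(s) = \sum_{i=0}^{\infty} q^{i(m-n-s)} \sum_{x \in (\fO/\fP^i)V} \card{\Adj(\Gamma)_x}.
\]
Collecting terms across all $i$ presents the right-hand side as a $\fP$-adic integral over $\fO^n$ whose integrand depends on the elementary divisor data of the defining matrix of $\Adj(\Gamma)$ --- precisely the kind of generating function considered in \cite{cico}.

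To conclude, I would invoke \cite[Cor.~B]{cico}, which establishes the rationality of such integrals in $q$ and $q^{-s}$, uniformly in $\fO$ (depending only on the residue field size $q$). The hard part of the proof is this final step, which in \cite{cico} is handled via resolution of singularities and a Denef-style reduction to combinatorial data on a toric model; the preceding steps are routine adaptations of the proof of Lemma~\ref{lem:cc_zeta_via_Adj} once the characteristic-independent group/Lie identification provided by Propositions~\ref{prop:two_graphical_group_schemes} and \ref{prop:centralisers} is in hand.
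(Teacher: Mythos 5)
This theorem carries no proof in the paper: it is quoted (up to a change of normalisation) from \cite[Cor.~B]{cico}, so the only ``proof'' the paper offers is that citation, and your proposal has to be measured against the argument in \cite{cico}. Your opening reduction is sound in outline: orbit counting plus Proposition~\ref{prop:centralisers} does give $\concnt(\GG_\Gamma(R)) = \card{R}^{m-n}\sum_{x\in R V}\card{\Adj(\Gamma)_x}$ for $R=\fO/\fP^i$ (over such $R$ one compares the image sizes of a matrix and its transpose via Smith normal form rather than rank, and no Lazard correspondence is needed since the centraliser identification is purely a class-$2$ statement), and this is exactly the link to adjacency modules established in \cite[\S\S 3.4, 6, 7]{cico}.

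The gap is in your final step, and it is twofold. First, as written it is circular: you ``invoke \cite[Cor.~B]{cico}'' to conclude, but \cite[Cor.~B]{cico} \emph{is} the statement being proved. Second, the decircularised version you describe --- resolution of singularities and a Denef-style reduction applied to the resulting $\fP$-adic integral --- would at best prove rationality of $\zeta^\concnt_{\GG_\Gamma\otimes\fO}(s)$ in $q^{-s}$ for each \emph{fixed} $\fO$ of characteristic zero, with numerator and denominator a priori depending on $\fO$ (or at least on its residue characteristic). The actual content of the theorem is \emph{uniformity}: a single $\tilde W_\Gamma(X,Y)$ valid for every compact discrete valuation ring with residue field of size $q$, including equicharacteristic rings such as $\FF_q[[t]]$, where resolution is not available. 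The proof in \cite{cico} does not proceed via resolution at all; as the present paper itself records (\S\ref{ss:intro/known} and Example~\ref{ex:graphical_uniform}), it is a constructive, graph-specific ``elaborate recursion'' for the average-size-of-kernel (ask) zeta function of the adjacency module, which produces $\tilde W_\Gamma(X,Y)$ explicitly and thereby yields uniformity. Without an argument of that kind (or some other characteristic-independent evaluation of the integral), your sketch does not reach the stated conclusion.
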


Theorem~\ref{thm:cico/uniform} contains Theorem~\ref{thm:cico/poly} as a
special case via $\tilde W_\Gamma(X,Y) = 1 + f_\Gamma(X) Y + \mathcal O(Y^2)$.

\begin{rem}
  In the present article, we chose to normalise our polynomials and rational
  functions slightly differently compared to \cite{cico}.
  Namely, what we call $\tilde W_\Gamma(X,Y)$ here coincides with
  $W_\Gamma^-(X,X^mY)$ in \cite{cico}, where $m$ is the number of edges of
  $\Gamma$.
\end{rem}

\subsection{Class-counting zeta functions of graphical group schemes and
  joins}
\label{ss:zeta_join}

Let $\Gamma_1$ and $\Gamma_2$ be graphs with $n_1$ and $n_2$ vertices and
$m_1$ and $m_2$ edges, respectively.
Define a rational function $Q_{\Gamma_1,\Gamma_2}(X,Y) \in \QQ(X,Y)$ via

{\footnotesize
\begin{align*}
  Q_{\Gamma_1,\Gamma_2}(X,Y)
  & =
    X^{m_1 + m_2 + (n_1-1)(n_2-1)}Y - 1 \\
  & \phantom= \, + \tilde W_{\Gamma_1}(X,X^{m_2 + (n_1-1)n_2}Y)\cdot
    (1-X^{m_1 + m_2 + (n_1-1)n_2}Y)
    (1-X^{m_1 + m_2 + (n_1-1)n_2+1}Y)\\
  & \phantom= \, + \tilde W_{\Gamma_2}(X,X^{m_1 + n_1(n_2-1)}Y)\cdot
    (1-X^{m_1 + m_2 + n_1(n_2-1)}Y)
    (1-X^{m_1 + m_2 + n_1(n_2-1)+1}Y).
\end{align*}}

Our study of joins in \S\ref{s:operations} was motivated by the following.

\begin{thm}[{\cite[Prop.~8.4]{cico}}]
  \label{thm:cico/join}
  Suppose that $\Gamma_1$ and $\Gamma_2$ are cographs.
  Then
  {\begin{equation}
      \label{eq:cico/join}
      \tilde W_{\Gamma_1\join \Gamma_2}(X,Y)
      =
      \frac{Q_{\Gamma_1,\Gamma_2}(X,Y)}{(1-X^{m_1+m_2+n_1n_2}Y)(1-X^{m_1+m_2+n_1n_2+1}Y)}.
  \end{equation}}
\end{thm}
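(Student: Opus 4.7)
The plan is to establish \eqref{eq:cico/join} by computing $\concnt(\GG_{\Gamma_1\join\Gamma_2}(\fO/\fP^i))$ for each $i$ via the graphical Lie algebra from~\S\ref{ss:centralisers} and then packaging the answers into generating functions. Since both sides of \eqref{eq:cico/join} are rational functions in $X,Y$ (by Theorem~\ref{thm:cico/uniform}), it suffices to verify the corresponding identity of Dirichlet series after the substitution $X=q$, $Y=q^{-s}$ for an arbitrary compact discrete valuation ring $\fO$ with residue field of size $q$.

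The key algebraic input is a centraliser computation in $\fh_{\Gamma_1\join\Gamma_2}(R)$ for $R=\fO/\fP^i$. The only bracket relations in $\fh_{\Gamma_1\join\Gamma_2}(R)$ beyond those inherited from $\fh_{\Gamma_1}(R)$ and $\fh_{\Gamma_2}(R)$ are the central relations $[\std_{v_1},\std_{v_2}]=\std_{v_1v_2}$ for $(v_1,v_2)\in V_1\times V_2$. Writing each element as $h=h_1+h_2+z$ with $h_j\in RV_j$ and $z$ in the centre, a direct calculation (cf.\ Proposition~\ref{prop:centralisers}\ref{prop:centralisers1}) shows that $k=k_1+k_2+y$ commutes with $h$ if and only if $k_j$ centralises $h_j$ in $\fh_{\Gamma_j}(R)$ for $j=1,2$ and the bilinear join constraint $h_{1,v_1}k_{2,v_2}=h_{2,v_2}k_{1,v_1}$ holds for all $(v_1,v_2)\in V_1\times V_2$. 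I would then apply Burnside's identity $\concnt(G)=\card{G}^{-1}\#\{(g,h)\in G^2:[g,h]=1\}$ to $G=\GG_{\Gamma_1\join\Gamma_2}(R)$ and partition the double sum according to whether each of $h_1,h_2$ vanishes. The stratum $h_1=h_2=0$ recovers the centre and, summed geometrically over $i$, produces the denominator $(1-X^{m_1+m_2+n_1n_2}Y)(1-X^{m_1+m_2+n_1n_2+1}Y)$. Each stratum in which exactly one of $h_1,h_2$ is non-zero contributes a shifted zeta function $\tilde W_{\Gamma_j}(X,X^{a_j}Y)$ with $a_1=m_2+(n_1-1)n_2$ and $a_2=m_1+n_1(n_2-1)$, decorated by the geometric-series correction factors $(1-X^{\cdot}Y)(1-X^{\cdot +1}Y)$ visible in $Q_{\Gamma_1,\Gamma_2}$; the shifts arise because the join constraint reduces the $V_{3-j}$-component of $k$ to an annihilator submodule of codimension equal to the rank of $h_j$. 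The final stratum with both $h_j$ non-zero then contributes the single $-1$.

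The main obstacle is the accurate bookkeeping of the join constraint over the non-field ring $R=\fO/\fP^i$: the number of admissible $k$ depends on the elementary divisor types of $h_1$ and $h_2$ rather than merely on their supports, so the clean support-based analysis of Lemma~\ref{lem:dim_Adjx} is insufficient on its own. The cograph hypothesis enters precisely here, supporting an induction on the cograph structure that allows one to treat the two constituent zeta functions $\tilde W_{\Gamma_j}$ as black-boxed rational functions already encoding the required elementary-divisor information; equivalently, it ensures that repeated application of the join/disjoint-union reductions terminates at single vertices, where $\tilde W_\Point(X,Y)=(1-XY)^{-1}$ is explicit. Assembling the four stratum contributions and recognising the resulting sums as a common numerator over the stated denominator completes the identification with $Q_{\Gamma_1,\Gamma_2}(X,Y)$.
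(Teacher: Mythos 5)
First, a point of reference: the paper does not prove Theorem~\ref{thm:cico/join} at all --- it is imported verbatim from \cite[Prop.~8.4]{cico}. The present article only verifies an ``approximate form'' of it (agreement modulo $Y^2$, via Corollary~\ref{cor:univ_join}) and explicitly leaves open whether the cograph hypothesis is needed. So there is no in-paper proof to compare against, and your attempt must stand on its own.

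On its own terms, your outline has the right flavour: Burnside's identity, the Lie-algebra centraliser condition for a join, and stratification by the $V_1$- and $V_2$-components are all reasonable, and you correctly locate the crux --- over $R=\fO/\fP^i$ the number of admissible $k_2$ under the join constraint depends on the elementary-divisor type (minimal valuation of the entries) of $h_1$, not merely on whether $h_1$ vanishes. But the proposal does not close this gap. What your stratification produces are valuation-weighted sums of the form $\sum_{h_1}\card{C_{\fh_{\Gamma_1}(R)}(h_1)}\cdot(\text{factor depending on the minimal valuation of }h_1)$, whereas $\tilde W_{\Gamma_1}(X,X^{a_1}Y)$ records only the unweighted totals $\sum_{h_1}\card{C(h_1)}$ at each level $i$, rescaled uniformly by $X^{a_1 i}$; the assertion that the former collapse to the latter is essentially the content of the theorem, and it is asserted rather than proved. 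The appeal to ``induction on the cograph structure'' does not repair this: the induction step for a join is precisely the identity being proved, and knowing $\tilde W_{\Gamma_j}$ explicitly for smaller cographs does not by itself supply the refined, valuation-stratified counts your argument needs --- so as written the induction is circular, and no actual use of the cograph hypothesis is exhibited. There are also concrete bookkeeping errors: the central stratum $h_1=h_2=0$ yields only the single geometric series $1/(1-X^{m_1+m_2+n_1n_2}Y)$, not both denominator factors; and the ``both non-zero'' stratum must account for the term $X^{m_1+m_2+(n_1-1)(n_2-1)}Y$ appearing in $Q_{\Gamma_1,\Gamma_2}(X,Y)$, not just the $-1$. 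In sum, this is a plausible plan whose hardest step --- and the only place the cograph hypothesis could enter --- is missing.
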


It remains unclear whether the assumption that $\Gamma_1$ and $\Gamma_2$ be
cographs in Theorem~\ref{thm:cico/join} is truly needed or if it is merely an
artefact of the proof given in \cite{cico}. 

\begin{question}[{\cite[Question 10.1]{cico}}]
  \label{q:cico/join}
  Does \eqref{eq:cico/join} hold for arbitrary graphs $\Gamma_1$ and
  $\Gamma_2$?
\end{question}

We obtain a positive answer to a (much weaker!) ``approximate form'' of
Question~\ref{q:cico/join}.

\begin{prop}
  Let $\Gamma_1$ and $\Gamma_2$ be arbitrary graphs with $n_1$ and $n_2$
  vertices and $m_1$ and $m_2$ edges, respectively.
  Then, regarded as formal power series in $Y$ over $\QQ(X)$,
  the rational function $\tilde W_{\Gamma_1\join \Gamma_2}(X,Y)$ and the right-hand
  side of \eqref{eq:cico/join} agree modulo $Y^2$.
\end{prop}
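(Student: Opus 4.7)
My plan is to reduce the asserted congruence modulo $Y^2$ to the identity in Corollary~\ref{cor:univ_join}. The key input is that $\tilde W_\Gamma(X,Y)\equiv 1 + f_\Gamma(X)\,Y\pmod{Y^2}$ for every graph $\Gamma$; this follows from the expansion
$\zeta^\concnt_{\GG_\Gamma\otimes\fO}(s) = 1 + \concnt(\GG_\Gamma(\fO/\fP))\,q^{-s}+\mathcal O(q^{-2s})$
together with Theorem~\ref{thm:cico/uniform} (and is essentially the way in which Theorem~\ref{thm:cico/uniform} specialises to Theorem~\ref{thm:cico/poly}).

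First, I would set $N := m_1+m_2+n_1n_2$, $a := m_2+(n_1-1)n_2$, $A := m_1+a = N-n_2$, $b := m_1+n_1(n_2-1)$, and $B := m_2+b = N-n_1$, and compute
\[
  \frac{1}{(1-X^N Y)(1-X^{N+1}Y)} \equiv 1 + (X^N+X^{N+1})\,Y \pmod{Y^2}.
\]
Then I would substitute $\tilde W_{\Gamma_i}(X,X^{a}Y)\equiv 1 + f_{\Gamma_i}(X)X^{a}\,Y$ (and similarly for $b$) into the definition of $Q_{\Gamma_1,\Gamma_2}(X,Y)$. The $-1$ in $Q_{\Gamma_1,\Gamma_2}$ cancels one of the two constant contributions from the $\tilde W_{\Gamma_i}$ summands, leaving constant term $1$, in agreement with $\tilde W_{\Gamma_1\join\Gamma_2}(X,0)=1$.

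Next, I would collect the coefficient of $Y$ in the right-hand side of \eqref{eq:cico/join}. After multiplying the expansion of $Q_{\Gamma_1,\Gamma_2}$ modulo $Y^2$ by the expansion of the inverse denominator and using $m_1+m_2+(n_1-1)(n_2-1) = N-n_1-n_2+1$, this coefficient works out to
\[
  X^{N-n_1-n_2+1} + f_{\Gamma_1}(X)X^{a} + f_{\Gamma_2}(X)X^{b} - X^{A}-X^{A+1} - X^{B}-X^{B+1} + X^{N}+X^{N+1}.
\]
Expanding $X^{N-n_1-n_2+1}(X^{n_1}-1)(X^{n_2}-1) = X^{N+1}-X^{N-n_2+1}-X^{N-n_1+1}+X^{N-n_1-n_2+1}$ and inserting it into the formula of Corollary~\ref{cor:univ_join} yields exactly the same polynomial. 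Since the left-hand side of \eqref{eq:cico/join} is congruent to $1 + f_{\Gamma_1\join\Gamma_2}(X)\,Y$ modulo $Y^2$, the two sides agree modulo $Y^2$.

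The step likely to require the most care is bookkeeping the exponents $A, B, N$ and checking that the corrections $-X^{A}-X^{A+1}-X^{B}-X^{B+1}+X^{N}+X^{N+1}$ coming from the ``denominator'' part of $Q_{\Gamma_1,\Gamma_2}$ match the terms $-X^{N-n_2+1}-X^{N-n_1+1}+X^{N+1}$ produced by expanding $(X^{n_1}-1)(X^{n_2}-1)$ together with the subtractions $-X^{m_1}$ and $-X^{m_2}$ inside Corollary~\ref{cor:univ_join}. No graph-theoretic input beyond Corollary~\ref{cor:univ_join} is needed, which is why this ``approximate'' form of Question~\ref{q:cico/join} is much weaker than the full statement.
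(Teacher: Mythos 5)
Your proposal is correct and follows exactly the paper's (much terser) argument: expand the right-hand side of \eqref{eq:cico/join} as a power series in $Y$, check that the coefficient of $Y$ equals the right-hand side of \eqref{eq:univ_join}, and invoke $\tilde W_\Gamma(X,Y)=1+f_\Gamma(X)Y+\mathcal O(Y^2)$. Your exponent bookkeeping (with $A=N-n_2$, $B=N-n_1$) checks out, so this is simply a fully written-out version of the paper's proof.
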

\begin{proof}
  This follows from Corollary~\ref{cor:univ_join}: by expanding
  the right-hand side of \eqref{eq:cico/join} as a series in $Y$, we find that
  the coefficient of $Y$ is given by the right-hand side of
  \eqref{eq:univ_join}.
\end{proof}

\subsection{Uniformity and the difficulty of computing zeta functions of groups}
\label{ss:uniformity}

For reasons that are not truly understood at present, many interesting
examples of zeta functions associated with group-theoretic counting problems
are ``(almost) uniform''.
As we now recall, the task of symbolically computing such zeta functions is
well-defined.

\paragraph{Uniformity.}
Beginning with a global object $G$ and a type of counting problem, we often
obtain (a) associated local objects $G_p$ indexed by primes (or places) $p$
and (b)~associated local zeta functions $\zeta_{G_p}(s)$. 
The family $(\zeta_{G_p}(s))_p$ of zeta functions is \emph{(almost) uniform}
if there exists $W_G(X,Y)\in \QQ(X,Y)$ such that $\zeta_{G_p}(s) =
W_G(p,p^{-s})$ for (almost) all $p$.
(Stronger forms of uniformity may also take into account local base extensions
or changing the characteristic of compact discrete valuation rings under
consideration.
Variants apply to multivariate zeta functions such as
$\zeta^\cc_{\GG}(s_1,s_2)$.)
It is then natural to seek to devise algorithms for computing $W_G(X,Y)$ and
to consider the complexity of such algorithms.

Numerous computations of (almost) uniform zeta functions associated with
groups and related algebraic structures have been recorded in the literature;
see e.g.\ \cite{dSW08}.
For a recent example, Carnevale et al.~\cite{CSV19} (see also \cite{CSV20})
obtained strong uniformity results for ideal zeta functions of certain
nilpotent Lie rings.
Their explicit formulae for rational functions as sums over chain complexes
involve sums of super-exponentially many rational functions.

The following example illustrates how class-counting zeta functions associated
with graphical group schemes fit the above template for uniformity of zeta
functions.

\begin{ex}
  \label{ex:graphical_uniform}
  Let $G = \GG_\Gamma$ be a graphical group scheme.
  For a prime $p$, let $\ZZ_p$ denote the ring of $p$-adic integers and let
  $G_p = G \otimes \ZZ_p$.
  Writing $\zeta_{G_p}(s) = \zeta^{\concnt}_{G_p}(s)$, the family
  $(\zeta_{G_p}(s))_p$ is uniform by Theorem~\ref{thm:cico/uniform} with
  $W_G(X,Y) = \tilde W_{\Gamma}(X,Y)$.
  The constructive proof of Theorem~\ref{thm:cico/uniform} in \cite{cico}
  gives rise to an algorithm for computing $\tilde W_\Gamma(X,Y)$ (see \cite[\S 9.1]{cico}).
  While no complexity analysis was carried out in \cite{cico}, 
  this algorithm appears likely to be substantially worse than polynomial-time.
  For a cograph $\Gamma$, \cite[Thms~C--D]{cico} combine to 
  produce a formula for $\tilde W_\Gamma(X,Y)$ as a sum of explicit rational
  functions, the number of which grows super-exponentially with the number of
  vertices of $\Gamma$.
\end{ex}

\paragraph{Computing bivariate conjugacy class zeta functions.}
As indicated (but not spelled out as such) in \cite[\S 8.5]{cico},
Theorem~\ref{thm:cico/uniform} admits the following generalisation: given a
graph~$\Gamma$, there exists $\tilde W_\Gamma(X,Y,Z)\in \QQ(X,Y,Z)$ such that
for all compact discrete valuation rings with residue field size $q$,
$\zeta^\cc_{\GG_\Gamma}(s_1,s_2) = \tilde W_\Gamma(q,q^{-s_1},q^{-s_2})$.
(Hence, $\tilde W_\Gamma(X,1,Z) = \tilde W_\Gamma(X,Z)$.)
Suppose that, given $\Gamma$, an oracle provided us with $\tilde
W_\Gamma(X,Y,Z)$ as a reduced fraction of polynomials.
Since $\tilde W_\Gamma(X,Y,Z) = 1 + \CSP_\Gamma(X,Y) Z + \mathcal O(Z^2)$, we
may then compute $\CSP_\Gamma(X,Y)$ by symbolic differentiation.
In particular, Proposition~\ref{prop:hard} implies that computing $\tilde
W_\Gamma(X,Y,Z)$ is NP-hard.
To the author's knowledge, this is the first non-trivial \itemph{lower} bound
for the difficulty of computing uniform zeta functions associated with groups.
We do not presently obtain a similar lower bound for the difficulty
of computing $\tilde W_\Gamma(X,Y)$ since the difficulty of determining
$f_\Gamma(X)$ remained unresolved in~\S\ref{s:ccp}.

\subsection{Open problem: higher congruence levels}

It is an open problem to find a combinatorial formula for the
rational functions $\tilde W_\Gamma(X,Y)$
(or their generalisations $\tilde W_\Gamma(X,Y,Z)$ from \S\ref{ss:uniformity}) as
$\Gamma$ ranges over all graphs on a given vertex set;
cf.\ \cite[Question\ 1.8(iii)]{cico}.
Corollary~\ref{cor:univ_formula} provides such a formula for the first
non-trivial coefficient of $\tilde W_\Gamma(X,Y) = 1 + f_\Gamma(X) Y +
\mathcal O(Y^2)$, and Theorem~\ref{thm:formula} provides a formula for
the first non-trivial coefficient of $\tilde W_\Gamma(X,Y,Z)= 1 +
\CSP_\Gamma(X,Y) Z + \mathcal O(Z^2)$.
As suggested by one of the anonymous referees, it is natural to ask whether
a combinatorial formulae of the type considered here can be obtained for the
coefficient of $Y^2$ in $\tilde W_\Gamma(X,Y)$ or of $Z^2$ in $\tilde
W_\Gamma(X,Y,Z)$.
These coefficients enumerate conjugacy classes of graphical
groups $\GG_\Gamma(\fO/\fP^2)$, where $\fO$ is a compact discrete valuation
ring with maximal ideal $\fP$.
The ``dual'' problem of enumerating characters (see \S\ref{ss:Irr}) is
related to recent research developments.
In particular, the character theory of
reductive groups over rings of the form $\fO/\fP^2$ has received considerable
attention; see e.g.~\cite{Sing10,SV19}.

\subsection*{Acknowledgements}

I am grateful to Matteo Cavaleri, Yinan Li, and Christopher Voll for
discussions on the work described in this paper,
and to the anonymous referees for numerous helpful comments and suggestions.


{
  \def\emph{\itemph}
  \bibliographystyle{abbrv}
  \bibliography{csp}
}

\vspace*{2em}

{\small
  \noindent
  School of Mathematical and Statistical Sciences \\
  National University of Ireland, Galway \\
  Ireland\\
  \\
  E-mail: \href{mailto:tobias.rossmann@nuigalway.ie}{tobias.rossmann@nuigalway.ie}
}

\end{document}